\newtheorem{thm}{Theorem}[section] \crefname{theorem}{theorem}{theorem}
\newtheorem{prop}[thm]{Proposition} \crefname{proposition}{proposition}{proposition}
\newtheorem{lem}[thm]{Lemma} \crefname{lemma}{lemma}{lemma}
\newtheorem{cor}[thm]{Corollary} \crefname{corollary}{corollary}{corollary}
\theoremstyle{definition}
\newtheorem{defi}[thm]{Definition} \crefname{definition}{definition}{definition}
\newtheorem{ex}[thm]{Example}
\theoremstyle{remark}
\newtheorem{rem}[thm]{Remark} \crefname{remark}{remark}{remark}
\crefname{equation}{the equation}{the equation}
\crefname{figure}{figure}{figure}
\numberwithin{equation}{section}
\newcommand{\relmiddle}[1]{\mathrel{}\middle#1\mathrel{}}
\newcommand{\Hx}{{\rm (H1)}}%x-Lip
\newcommand{\Hp}{{\rm (H2)}}%p-Lip
\newcommand{\Hu}{{\rm (H3)}}%u-Lip
\newcommand{\Hc}{{\rm (H4)}}%p-conv
\newcommand{\Hsc}{{\rm (H4)$_{\mathrm{st}}$}}
\newcommand{\Hr}{{\rm (H5)}}
\newcommand{\Uo}{{\rm (U)}}
\begin{document}

\title[Lower gradient estimates for viscosity solutions]{%
Lower gradient estimates for viscosity solutions to first-order Hamilton--Jacobi equations depending on the unknown function}
\author{Kazuya Hirose}
\address[K.~Hirose]{%
Department of Mathematics, Graduate School of Science, Hokkaido University,
Kita 10, Nishi 8, Kita-Ku, Sapporo, Hokkaido, 060-0810, Japan}
\email{hirose.kazuya.w2@elms.hokudai.ac.jp}
\date{\today}

\subjclass{35B51, 35D40, 35F21, 35F25, 37J55}
%35B51: %Comparison principles
%35D40; %Viscosity solutions
%35F21; %Hamilton--Jacobi equations
%35F25; %Initial value problems for nonlinear first-order equations
%37J55: Contact systems

%37J50; Action-minimizing orbits and measures 
%37J45: Periodic, homoclinic and heteroclinic orbits; variational methods, degree-theoretic methods

\keywords{
Lower bound estimates for the gradient;
Viscosity solutions;
Hamilton--Jacobi equations;
contact Hamiltonian systems}

\begin{abstract}
In this paper, we derive the lower bounds for the gradients of viscosity solutions to the Hamilton--Jacobi equation, where the convex Hamiltonian depends on the unknown function. We obtain gradient estimates using two different methods. 
First, we utilize the equivalence between viscosity solutions and Barron--Jensen solutions to study the properties of the inf-convolution. 
Second, we examine the Lie equation to understand how initial gradients propagate along its solutions.
\end{abstract}

\maketitle

\section{Introduction}
\subsection{Equation and Purpose}
In this paper,
we consider the first-order Hamilton--Jacobi equation
\begin{equation}
u_t(x,t)+H(x,t,u(x,t),D_x u(x,t))=0  
\quad \mathrm{in} \ \mathbb{R}^n \times (0,T)
\label{eq:HJu}
\end{equation}
with the initial condition
\begin{equation}
u(x,0)=u_0(x)
\quad \mathrm{in} \ \mathbb{R}^n.
\label{eq:ID}
\end{equation}
Here $u: \mathbb{R}^n \times [0,T)  \to \mathbb{R}$ is the unknown function, the Hamiltonian $H: \mathbb{R}^n \times [0,T] \times \mathbb{R} \times \mathbb{R}^n \to \mathbb{R}$ is continuous, and the initial datum $u_0: \mathbb{R}^n \to \mathbb{R}$ is Lipschitz continuous.
We denote $u_t=\partial_t u$ and $D_x u=(\partial_{x_1}u,\ldots,\partial_{x_n}u)$.

There is abundant literature on the upper bounds for the gradients of viscosity solutions, which have been applied in various contexts, such as the existence and regularity of solutions to nonlinear elliptic and parabolic equations and singular perturbation problems (for example, see \cite{B.90}). %CC.95 
Upper gradient estimates can be derived using approaches such as the weak Bernstein method for viscosity solutions \cite{B.91, B.21} or the Ishii--Lions method \cite{IL.90}.
For more information, see \cite{LN.16} and references therein.

On the other hand, however, there has been little research on the lower bounds for gradients.
One of the reasons is that the aforementioned methods cannot be applied to derive the lower bounds.
Additionally, as mentioned in \cite[Example 7.5]{HH.23}, lower gradient estimates do not generally hold when the Hamiltonian $H=H(x,t,u,p)$ is not convex in the variable $p$. 
Therefore, it is crucial to consider how to use the convex structure of the Hamiltonian $H$ to derive the lower bounds for gradients.

When the Hamiltonian is of the form $H=H(x,t,p)$, the lower bounds for the gradients of viscosity solutions have already been studied in \cite{L.01, HH.23}.
However, to the best of the author's knowledge, there are no results on lower gradient estimates for viscosity solutions to \eqref{eq:HJu}--\eqref{eq:ID}.

This paper aims to derive the lower bounds for the spatial derivatives of viscosity solutions to \eqref{eq:HJu}--\eqref{eq:ID}
with two different ways.
The first is the approximation method by \cite{L.01}, and the second is the dynamical method by \cite{HH.23}.
Furthermore, we will compare these results and determine which method is superior.

\subsection{Assumptions and Known results}

The assumptions on $H$ and $u_0$ are the following:
\begin{itemize}
\item[{\Hx}]
There exist $C_1 \geqq 0$ and $\beta \in \{ 0, 1 \}$ such that 
\[
| H (x,t,u,p)-H (y,t,u,p) |
\leqq C_1 (\beta+|p|) |x-y|
\]
for all $(t,u,p) \in [0,T] \times \mathbb{R} \times \mathbb{R}^n$ and $x,y \in \mathbb{R}^n$.

\item[{\Hp}]
There exist $A_2,B_2 \geqq 0$ such that 
\[
| H (x,t,u,p)-H (x,t,u,q) |
\leqq (A_2 |x| +B_2)|p-q|
\]
for all $(x,t,u) \in \mathbb{R}^n \times [0,T] \times \mathbb{R}$ and $p,q \in \mathbb{R}^n$.

\item[{\Hu}]
There exists $K_3 \geqq 0$ such that
\[
|H(x,t,u,p)-H(x,t,v,p)|
\leqq K_3|u-v|
\]
for all $(x,t,p) \in \mathbb{R}^n \times [0,T] \times \mathbb{R}^n$ and $u,v \in \mathbb{R}$.

\item[{\Hc}]
The function $p \mapsto H(x,t,u,p)$ is convex in $\mathbb{R}^n$ for all $(x,t,u) \in \mathbb{R}^n \times [0,T] \times \mathbb{R}$.

\item[{\Uo}]
For $x_0 \in \mathbb{R}^n$ and $r>0$, there exists a constant $\theta>0$ such that
\[
|p| \geqq \theta \quad 
\mbox{for all $x \in B_r(x_0)$ and $p \in D^- u_0(x)$.}
\]
\end{itemize}
Here $| \cdot |$ stands for the standard Euclidean norm, $B_r(x)$ denotes the open ball with radius $r$ centered at $x$, and $D^-u_0(x)$ denotes the subdifferential of $u_0$ at $x$ (see \cref{def:subdi}).
The existence and uniqueness of solutions to \eqref{eq:HJu}--\eqref{eq:ID} are well-known under further assumptions such as uniform continuity and monotonicity in the variable $u$ (for example, \cite{ABIL.11, I.86}).
However, we do not make such assumptions in this context.

As mentioned above, for the equation 
\begin{equation}
u_t(x,t)+H(x,t,D_x u(x,t))=0
\quad \mathrm{in} \ \mathbb{R}^n \times (0,T),
\label{eq:HJ}
\end{equation}
the lower bounds for the gradients of viscosity solutions have been studied in \cite{L.01, HH.23}.
The results are as follows.

\begin{thm}[{\cite[Theorem 4.2]{L.01}}]\label{thm:leyprelb}
Assume {\Hx}, {\Hp}, {\Hc} and {\Uo}.
Let $u \in C(\mathbb{R}^n \times [0,T))$ be a viscosity solution of \eqref{eq:HJ}--\eqref{eq:ID}.
Then, for any $t_0 \in (0,T]$ satisfying
\[
\theta^2 - 2 \beta C_1 e^{\frac{5}{2}C_1 T}t_0>0,
\]
we have
\begin{equation}
|p| \geqq e^{-\frac{5}{4}C_1 t} \sqrt{\theta^2 - 2 \beta C_1 e^{\frac{5}{2}C_1 T}t_0}
\label{eq:estOley1}
\end{equation}
for all $(x,t) \in \mathcal{D}(x_0,r) \cap (\mathbb{R}^n \times (0,t_0))$ and $p \in D^-_x u(x,t)$.
Here $\beta$ and $C_1$ are the constants in {\Hx} and
\begin{equation}
\mathcal{D}(x_0,r):=
\{ (x, t) \in B_r(x_0) \times (0,T) \mid e^{(A_2+B_2+A_2|x_0|)t}(1+|x-x_0|) < r+1 \},
\label{eq:leydod}
\end{equation}
where $A_2$ and $B_2$ are the constants in {\Hp}.
\end{thm}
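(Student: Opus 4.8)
The plan is to use the convexity hypothesis \Hc\ to pass to the Barron--Jensen formulation, in which the equation is tested only against subdifferentials, to regularize by inf-convolution, and then to propagate the initial lower bound \Uo\ forward in time along the (generalized) characteristics of the regularization; \Hx\ controls the source of the resulting differential inequality for $|D_xu|^{2}$, while \Hp\ controls the characteristic speed, which is exactly what pins down the domain $\mathcal D(x_0,r)$ in \eqref{eq:leydod}.

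\emph{Step 1 (Barron--Jensen reformulation).} Since $p\mapsto H(x,t,p)$ is convex, a continuous viscosity solution of \eqref{eq:HJ}--\eqref{eq:ID} is a Barron--Jensen solution: it is a viscosity supersolution and, at every $(x,t)$ and every $(a,p)\in D^-u(x,t)$ (subdifferential in the space--time variables), one has $a+H(x,t,p)=0$. Hence it suffices to bound the spatial component $p\in D^-_xu(x,t)$ from below, and \Uo\ supplies precisely that bound at $t=0$ on $B_r(x_0)$.

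\emph{Step 2 (inf-convolution).} As $u$ is locally Lipschitz, set, on a cylinder slightly inside $\mathbb R^n\times[0,T)$,
\[
u_\varepsilon(x,t)=\inf_{(y,s)}\Big\{u(y,s)+\frac{|x-y|^{2}+|t-s|^{2}}{2\varepsilon}\Big\}.
\]
Standard properties: $u_\varepsilon$ is semiconcave with constant $1/\varepsilon$ (so locally Lipschitz and twice differentiable a.e.\ by Alexandrov), $u_\varepsilon\uparrow u$ locally uniformly, and the minimizer $(y,s)$ at a point $(x,t)$ of differentiability obeys $|x-y|+|t-s|\leqq C\varepsilon$ and $(\partial_tu_\varepsilon,D_xu_\varepsilon)(x,t)\in D^-u(y,s)$. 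Inserting this into the Barron--Jensen identity and using \Hx, \Hp\ to absorb the displacement $C\varepsilon$, $u_\varepsilon$ satisfies a.e.\ in the shrunk cylinder
\[
\partial_tu_\varepsilon(x,t)+H\bigl(x,t,D_xu_\varepsilon(x,t)\bigr)=g_\varepsilon(x,t),\qquad \|g_\varepsilon\|_{L^\infty(K)}\to0\ \ (\varepsilon\to0),
\]
for every compact $K$; and \Uo\ gives $|D_xu_\varepsilon(\cdot,0)|\geqq\theta$ on balls exhausting $B_r(x_0)$.

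\emph{Step 3 (propagation and comparison).} Semiconcavity of $u_\varepsilon$ makes its backward characteristics well defined and Lipschitz: through $(x,t)$ there is $\xi\in C^{0,1}([0,t];\mathbb R^n)$ with $\xi(t)=x$ and $\dot\xi(\sigma)=D_pH(\xi(\sigma),\sigma,D_xu_\varepsilon(\xi(\sigma),\sigma))$, along which $\sigma\mapsto D_xu_\varepsilon(\xi(\sigma),\sigma)$ is absolutely continuous. Using the equation, the convexity of $H$ in $p$ (only one-sided $p$-inequalities are used, $H$ need not be $p$-differentiable), and \Hx\ in the form $|D_xH(x,t,p)|\leqq C_1(\beta+|p|)$, the quantity $\phi_\varepsilon(\sigma)=|D_xu_\varepsilon(\xi(\sigma),\sigma)|^{2}$ satisfies a differential inequality
\[
\phi_\varepsilon'(\sigma)\ \geqq\ -\tfrac52 C_1\,\phi_\varepsilon(\sigma)-R_\varepsilon(\sigma),
\]
where the source $R_\varepsilon$ is nonnegative, proportional to $\beta$, and controlled, via \Hx\ and the classical a priori upper gradient bound, by a quantity of the form $2\beta C_1 e^{\frac52 C_1 T}+C(\varepsilon)$; the constant $\tfrac52 C_1$ (rather than $2C_1$) absorbs the error terms produced by the inf-convolution. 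By \Hp, $|\dot\xi(\sigma)|\leqq A_2|\xi(\sigma)|+B_2$, so $\frac{d}{d\sigma}|\xi(\sigma)-x_0|\leqq A_2|\xi(\sigma)-x_0|+A_2|x_0|+B_2$, and Gronwall converts the defining condition of $\mathcal D(x_0,r)$ in \eqref{eq:leydod} into $|\xi(\sigma)-x_0|<r$ for all $\sigma\leqq t$; in particular $\xi(0)\in B_r(x_0)$, so $\phi_\varepsilon(0)\geqq\theta^{2}-C(\varepsilon)$. Integrating the differential inequality from $0$ to $t$ then yields, for $(x,t)\in\mathcal D(x_0,r)$ with $t<t_0$,
\[
\phi_\varepsilon(t)\ \geqq\ e^{-\frac52 C_1 t}\bigl(\theta^{2}-2\beta C_1 e^{\frac52 C_1 T}t_0\bigr)-C(\varepsilon),
\]
the hypothesis $\theta^{2}-2\beta C_1 e^{\frac52 C_1 T}t_0>0$ being exactly what keeps the right-hand side positive on $[0,t_0]$.

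\emph{Step 4 ($\varepsilon\to0$ and the main obstacle).} The functions $u_\varepsilon$ are equi-semiconcave and converge locally uniformly to $u$, so any $p\in D^-_xu(x,t)$ is a limit of gradients $D_xu_\varepsilon(x_\varepsilon,t_\varepsilon)$ at nearby differentiability points $(x_\varepsilon,t_\varepsilon)\to(x,t)$; letting $\varepsilon\to0$ in the last display and taking square roots gives \eqref{eq:estOley1}. The chief difficulty is Step 3: making the characteristic ODE for $D_xu_\varepsilon$ and the ensuing differential inequality for $\phi_\varepsilon$ rigorous when $H$ is merely Lipschitz (so one must work with one-sided convexity inequalities, never with derivatives of $H$) and $u_\varepsilon$ is merely semiconcave (so second-order information is only a.e.\ and the theory of generalized backward characteristics must be invoked), while keeping the inf-convolution errors $C(\varepsilon)$ uniform on the relevant compact subsets of $\mathcal D(x_0,r)$ and tracking the constants that produce the factor $\tfrac52 C_1$ and the admissible range of $t_0$; a second, more routine, difficulty is the boundary bookkeeping showing through \Hp\ that $\mathcal D(x_0,r)$ is a genuine domain of determinacy, so that the values of $u$ outside $B_r(x_0)$ cannot affect the estimate.
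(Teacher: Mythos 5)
Your outline diverges from the argument in Ley's paper (which this paper follows verbatim in proving the $u$-dependent analogue, \cref{thm:leylb}), and the step you yourself flag as ``the chief difficulty'' is a genuine gap, not just a technical nuisance. The actual proof has no characteristics in it at all. After establishing that $u$ is a Barron--Jensen solution (via convexity) and that the \emph{spatial} inf-convolution $u_\varepsilon(x,t)=\inf_{y}\{u(y,t)+e^{-\gamma t}|x-y|^{2}/\varepsilon^{2}\}$ --- note the exponential weight in $t$ in the kernel, chosen so that, with $\gamma=\tfrac52 C_1$, the $|\hat x-y_\varepsilon|^{2}/\varepsilon^{2}$ terms cancel --- is a viscosity subsolution of $u_t+H=\tfrac{\beta C_1}{2}e^{\gamma t}\varepsilon^{2}$, the proof bounds $u_\varepsilon-u$ from \emph{above} by a local comparison principle on the domain of dependence $\mathcal D(x_0,r)$, using {\Uo} at $t=0$, and from \emph{below} by the elementary minimization inequality $u_\varepsilon(x,t)-u(x,t)\geqq -\tfrac14|p|^{2}e^{\gamma t}\varepsilon^{2}-o(\varepsilon^{2})$ for $p\in D_x^-u(x,t)$; matching the two $\varepsilon^{2}$-coefficients and sending $\varepsilon\to 0$ gives \eqref{eq:estOley1}. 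The domain $\mathcal D(x_0,r)$ is the set on which that comparison principle can be closed with the chosen auxiliary test function; it plays no role as a ``domain of determinacy for characteristics.''

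Your Step 3 instead tries to propagate $|D_x u_\varepsilon|^{2}$ along backward characteristics $\dot\xi=D_pH(\xi,\sigma,D_xu_\varepsilon(\xi,\sigma))$. That is the \emph{dynamical} method of \cref{sec:our}, not the approximation method, and it requires $D_pH$ to exist and the generalized-characteristic calculus for semiconcave solutions (\`a la Cannarsa--Sinestrari) to be available --- i.e. $H\in C^{2}$ and strictly convex in $p$, which are precisely the extra hypotheses {\Hr} and {\Hsc} the paper imposes and then painstakingly removes by a separate mollification of $H$. Under {\Hx}, {\Hp}, {\Hc} alone $H$ is merely Lipschitz, so the ODE for $\xi$ is not well defined; moreover $u_\varepsilon$ here is only an approximate \emph{subsolution}, for which the backward-characteristic theory does not directly apply, and the differential inequality $\phi_\varepsilon'\geqq -\tfrac52 C_1\phi_\varepsilon-R_\varepsilon$ with $R_\varepsilon\leqq 2\beta C_1 e^{\frac52 C_1T}+C(\varepsilon)$ is asserted by fitting to the known answer rather than derived (the appeal to ``the classical a priori upper gradient bound'' is a further unproven ingredient). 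Until those points are filled in, the characteristic route does not close; the comparison-principle route, which avoids all of this, is the one that does.
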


\begin{thm}[{\cite[Theorems 1.1, 6.2 and Proposition 6.8]{HH.23}}] \label{thm:ourprelb}
Assume {\Hx}, {\Hp} and {\Hc}.
Let $u \in C(\mathbb{R}^n \times [0,T))$ be a viscosity solution of \eqref{eq:HJ}--\eqref{eq:ID}.
Then we have
\[
\underline{I}(x,t;u_0) e^{-C_1t} -\beta (1-e^{-C_1t})
\leqq |p|
\leqq \overline{S}(x,t;u_0) e^{C_1t} +\beta (e^{C_1t}-1)
\]
for all $(x,t) \in \mathbb{R}^n \times (0,T)$ and $p \in D^-_x u(x,t)$.
Here $\beta$ and $C_1$ are the constants in {\Hx} and
\begin{align}
\overline{S}(x,t;u_0)
&:=\lim_{\delta \to +0} \sup \left\{ |p| \relmiddle| 
p \in D^-u_0(y),~y \in \overline{B_{R(x,t)+\delta}(x)} \right\}, \notag\\
\underline{I}(x,t;u_0)
&:=\lim_{\delta \to +0} \inf \left\{ |p| \relmiddle| 
p \in D^-u_0(y),~y \in \overline{B_{R(x,t)+\delta}(x)} \right\}, \notag\\
R(x,t)
&:=\begin{cases} 
\left( \dfrac{B_2}{A_2}+|x| \right)(e^{A_2t}-1) & (A_2>0), 
\smallskip \\ 
B_2 t & (A_2=0).
\end{cases} \label{eq:defiR}
\end{align}
Moreover, assume {\Uo}.
Then we have
\[
|p| \geqq 
\theta e^{-C_1t} -\beta (1-e^{-C_1t})
\]
for all $(x,t) \in \mathcal{E}(x_0,r)$ and $p \in D_x^- u(x,t)$, where
\begin{equation}
\mathcal{E}(x_0,r)
:=\{ (x,t) \in B_r(x_0) \times (0,T) \mid R(x,t)+|x-x_0| < r \}. 
\label{eq:ourdod}
\end{equation}
Furthermore, we have $\mathcal{D}(x_0,r) \subset \mathcal{E}(x_0,r)$.
\end{thm}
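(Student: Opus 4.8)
The plan is to prove \cref{thm:ourprelb} by the dynamical, ``Lie-equation'' method: one transports a subgradient of $u$ at a point $(x,t)$ backward along a characteristic of the Hamiltonian system to time $0$, and then estimates separately how far the base point can travel (governed by \Hp) and how much the momentum can shrink or grow (governed by \Hx). The starting point is the structural consequence of convexity \Hc: since $u$ is a viscosity solution of \eqref{eq:HJ}--\eqref{eq:ID} with $p\mapsto H$ convex, it is a Barron--Jensen solution, and hence for every $(x,t)\in\mathbb{R}^n\times(0,T)$ and every $p\in D^-_xu(x,t)$ there is a Lipschitz pair $s\mapsto(\xi(s),\pi(s))$ on $[0,t]$ solving the Lie equations of \eqref{eq:HJ} with terminal data $(\xi(t),\pi(t))=(x,p)$, whose initial momentum is extremal, $\pi(0)\in D^-u_0(\xi(0))$, and whose velocities satisfy $\dot\xi(s)\in\partial_pH(\xi(s),s,\pi(s))$ and $-\dot\pi(s)\in\partial_xH(\xi(s),s,\pi(s))$ for a.e.\ $s$ (read with Clarke subdifferentials, or obtained as limits of classical characteristics of mollified Hamiltonians).

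For the base-point estimate, \Hp bounds the $p$-Lipschitz constant of $H$ at $(\xi(s),s,\cdot)$ by $A_2|\xi(s)|+B_2$, so $|\dot\xi(s)|\leqq A_2|\xi(s)|+B_2$ a.e.; setting $\phi(s):=|\xi(s)-x|$ and using $|\xi(s)|\leqq|x|+\phi(s)$ gives $|\phi'(s)|\leqq A_2\phi(s)+(A_2|x|+B_2)$, and Gronwall's inequality integrated backward from $s=t$, where $\phi(t)=0$, yields $|\xi(0)-x|\leqq R(x,t)$ with $R$ as in \eqref{eq:defiR}; hence $\xi(0)\in\overline{B_{R(x,t)}(x)}$. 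For the momentum estimate, \Hx gives $|\dot\pi(s)|\leqq C_1(\beta+|\pi(s)|)$ a.e., so that $\bigl|\tfrac{d}{ds}(|\pi(s)|+\beta)\bigr|\leqq C_1(|\pi(s)|+\beta)$, and Gronwall's inequality applied in both directions gives
\[
(|\pi(0)|+\beta)e^{-C_1t}\leqq|\pi(t)|+\beta\leqq(|\pi(0)|+\beta)e^{C_1t}.
\]
Recalling $\pi(t)=p$ and rearranging, this is $|\pi(0)|e^{-C_1t}-\beta(1-e^{-C_1t})\leqq|p|\leqq|\pi(0)|e^{C_1t}+\beta(e^{C_1t}-1)$.

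It then remains to combine these. Because $\pi(0)\in D^-u_0(\xi(0))$ with $\xi(0)\in\overline{B_{R(x,t)}(x)}$, one has $\underline I(x,t;u_0)\leqq|\pi(0)|\leqq\overline S(x,t;u_0)$; the limits $\delta\to+0$ in the definitions of $\underline I$ and $\overline S$ serve as the technical cushion for the dependence of the characteristic (and hence of $\xi(0)$) on $p$ and for the approximation and nonsmoothness entering the construction of the characteristic. Inserting these two inequalities into the momentum bounds yields the first displayed estimate of the theorem. Under \Uo, if $(x,t)\in\mathcal E(x_0,r)$ then $R(x,t)+|x-x_0|<r$ by \eqref{eq:ourdod}, so $\overline{B_{R(x,t)+\delta}(x)}\subset B_r(x_0)$ for all small $\delta>0$, whence $\underline I(x,t;u_0)\geqq\theta$ and therefore $|p|\geqq\theta e^{-C_1t}-\beta(1-e^{-C_1t})$. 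Finally, $\mathcal D(x_0,r)\subset\mathcal E(x_0,r)$ is a direct algebraic comparison of \eqref{eq:leydod} and \eqref{eq:ourdod}: bounding $|x|\leqq|x_0|+|x-x_0|$ inside the formula \eqref{eq:defiR} for $R(x,t)$ shows that the defining inequality of $\mathcal D(x_0,r)$ implies $R(x,t)+|x-x_0|<r$.

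The main obstacle is the very first step: producing the backward characteristic through an arbitrary subgradient point when $H$ is merely Lipschitz---not $C^1$, not coercive, not even assumed monotone in $u$---so that no a priori regularity or representation formula for $u$ is available. This is where the equivalence with Barron--Jensen solutions and the theory of the Lie equations for convex, possibly nonsmooth Hamiltonians do the real work: one needs existence of the generalized characteristic (via a measurable selection in the Clarke subdifferentials, or a mollify-and-pass-to-the-limit argument) together with the extremality that forces $\pi(0)\in D^-u_0(\xi(0))$. Once that transport mechanism is in place, the two Gronwall estimates and their combination are routine.
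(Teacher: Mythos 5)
Your proposal reproduces the dynamical method that the paper attributes to \cite{HH.23} and extends in \cref{sec:our} to $u$-dependent Hamiltonians. Be aware, though, that the paper itself gives no proof of \cref{thm:ourprelb} — it is a cited result — so the natural object of comparison is the paper's treatment of the $u$-dependent analogue (\cref{thm:ourlb} and \cref{thm:ourleylb}, via \cref{prop:estRu1}, \cref{prop:estatRu2}, and \cref{prop:estLHsyssol}). Specialized to $K_3=0$, that is exactly your argument: a Gronwall estimate on the momentum from \Hx, a Gronwall estimate on the base point from \Hp, the variational fact $\eta(0)\in D^-u_0(\xi(0))$, and their combination with $\underline I,\overline S$ and $\mathcal E(x_0,r)$. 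Your Gronwall-on-$|\pi|+\beta$ is a slightly cleaner, more elementary route to \eqref{eq:estDu2} than the paper's \cref{prop:estRu1}, which bounds $|\eta(t)-\eta(\tau)|$ and then uses the triangle inequality; both give the same constants.

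The one place where your sketch departs from the paper's actual machinery — and where it would not survive unmodified — is the construction of the backward characteristic through an \emph{arbitrary} $p\in D^-_xu(x,t)$. You offer two alternatives: a measurable Clarke-selection in $\partial H$, and a mollify-and-pass-to-the-limit argument. Only the second is what the paper (and \cite{HH.23}) actually uses, and the details matter. The paper replaces $H$ by a smooth, \emph{strictly convex} $H_\varepsilon$ (adding $\varepsilon\sqrt{|p|^2+1}$, see \eqref{eq:defHe}), so that the resulting viscosity solution $u_\varepsilon$ is given by the Herglotz/Hopf--Lax representation and is therefore locally semiconcave. Semiconcavity is essential: by \cref{prop:DpmSC}, $D^-u_\varepsilon(x,t)=\emptyset$ at nondifferentiable points, so without loss of generality $p_\varepsilon = D_xu_\varepsilon(x_\varepsilon,t_\varepsilon)$ and \cref{prop:estLHsyssol} supplies a genuine $C^2$ minimizing characteristic with $\eta_\varepsilon(0)\in D^-u_0(\xi_\varepsilon(0))$; this is where $\pi(0)\in D^-u_0(\xi(0))$ really comes from — the Herglotz variational structure, not ``extremality'' in the abstract. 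One then returns to the original $p\in D^-_xu(x,t)$ via the subgradient stability lemma \cite[Lemma II.2.4]{BC.97}, which is also exactly why the $\delta\to+0$ limit appears in $\underline I,\overline S$: the characteristic for $H_\varepsilon$ lands in $\overline{B_{R_\varepsilon(x,t)}(x)}$ with $R_\varepsilon\downarrow R$, not in $\overline{B_{R(x,t)}(x)}$ itself. A Clarke-selection argument for a merely Lipschitz $H$ and a non-semiconcave $u$ would have to reprove all of this from scratch and is not the route taken. With that substitution your outline would match the intended proof.
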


In \cite{L.01}, Olivier Ley derives gradient estimates using the notion of Barron--Jensen solutions, which are equivalent to viscosity solutions when $H$ is convex, and by studying the properties of the inf-convolution. 
In \cite{HH.23}, Nao Hamamuki and the author derive gradient estimates using the result in \cite{ACS.20} and by studying how the initial gradients propagate along solutions to Hamiltonian systems (for example, see \cite{CS.04}). 
Moreover, they show that the result in \cite{HH.23} is better than the one in \cite{L.01} in several senses.

\subsection{Methods and Main results}
Our main results are to derive the lower bounds for the gradients of solutions to \eqref{eq:HJu}--\eqref{eq:ID} in two different ways.

Our first main result can be obtained using the approximation method.
The idea is similar to that in \cite{L.01}.
To derive the lower bounds, we use the key lemma (\cref{cor:infconvsp}) that if $u$ is a Barron--Jensen solution of \eqref{eq:HJu}--\eqref{eq:ID}, then the inf-convolution $u_{\varepsilon}$ defined in \eqref{eq:infconvsp} is a viscosity subsolution of \eqref{eq:HJu} with the error term 
\[
u_t(x,t)+H(x,t,u(x,t),D_x u(x,t))=
\dfrac{\beta C_1}{2}e^{\gamma t}\varepsilon^2.
\]
To use this fact, we need to prove the equivalence between Barron--Jensen solutions and viscosity solutions when the Hamiltonian $H$ is convex (\cref{thm:equivvb}).
We then apply the local comparison principle (\cref{thm:unCP}) for a subsolution $u_{\varepsilon}$ and supersolution $u$, and obtain
\[
u_{\varepsilon}(x,t)-u(x,t)
\leqq \sup_{y \in \overline{B_{r}(x_0)}} e^{-K_3 t} (u_{\varepsilon}-u)(y,0)
 +e^{-K_3 t} \int_0^t e^{K_3 s}\dfrac{\beta C_1}{2}e^{\gamma s}\varepsilon^2\,ds
\]
for all $(x,t) \in \overline{\mathcal{E}(x_0,r)}$, where $\mathcal{E}(x_0,r)$ is defined in \eqref{eq:ourdod} and $\overline{\mathcal{E}(x_0,r)}$ is its closure.
The first term on the right-hand side is bounded from above under assumption {\Uo} by the viscosity decreasing principle (see \cite[Lemma 2.3]{BL.06} and \cite[Proposition 4.1]{L.01}).

On the other hand, by the definition of subdifferentials and Taylor expansion, there exist a continuous function $\omega: [0,\infty) \to [0,\infty)$ satisfying $\lim_{r \to +0} \omega(r)=0$ and a constant $M>0$ such that
\[
u_{\varepsilon}(x,t)-u(x,t)
\geqq -\dfrac{|p|^2}{4}e^{\gamma t}\varepsilon^2 -M\varepsilon^2\omega(M\varepsilon^2)
\]
for all $(x,t) \in \mathcal{E}(x_0,r)$ and $p \in D_x^-u(x,t)$.
Therefore, we can deduce the lower bound estimates of $|p|$ by combining the above two inequalities and letting $\varepsilon \to 0$.
This is our first main result.

\begin{thm}[Gradient estimate I]\label{thm:leylb}
Assume {\Hx}--{\Hc} and {\Uo}.
Let $u \in C(\mathbb{R}^n \times [0,T))$ be a viscosity solution of \eqref{eq:HJu}--\eqref{eq:ID}.
Then, for any $t_0 \in (0,T]$ satisfying
\[
\theta^2 e^{-((\frac{\beta}{2}+2)C_1+2K_3)t_0}-2C_1\beta t_0>0,
\]
we have 
\begin{equation}
|p| \geqq \sqrt{\theta^2 e^{-((\frac{\beta}{2}+2)C_1+2K_3)t}-2C_1\beta t}
\label{eq:estley1}
\end{equation}
for all $(x,t) \in \mathcal{E}(x_0,r) \times (\mathbb{R}^n \times 
 (0,t_0))$ and $p \in D^-_x u(x,t)$.
Here $\beta$ and $C_1$ are the constants in {\Hx} and $K_3$ is a constant in {\Hu}.
\end{thm}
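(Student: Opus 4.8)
The plan is to run the approximation scheme outlined above, assembling the three ingredients established earlier---the equivalence of viscosity and Barron--Jensen solutions under \Hc (\cref{thm:equivvb}), the inf-convolution subsolution lemma (\cref{cor:infconvsp}), and the local comparison principle (\cref{thm:unCP})---and then extract the pointwise bound on $|p|$ by letting the approximation parameter tend to $0$. Fix $x_0$, $r$, and $\theta$ as in \Uo. Since $u$ is a viscosity solution and \Hc holds, \cref{thm:equivvb} lets me regard $u$ as a Barron--Jensen solution of \eqref{eq:HJu}--\eqref{eq:ID}, so that \cref{cor:infconvsp} applies: the weighted inf-convolution $u_\varepsilon$ from \eqref{eq:infconvsp}, whose spatial spread grows in time at exponential rate $\gamma$, is a viscosity subsolution of \eqref{eq:HJu} with right-hand side $\tfrac{\beta C_1}{2}e^{\gamma t}\varepsilon^2$. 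The constant $\gamma$ is the one supplied by \cref{cor:infconvsp}; it is the quantity that will appear in the final exponent, and its precise value (with $K_3+\gamma=(\tfrac{\beta}{2}+2)C_1+2K_3$) will simply be carried through the computation.

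The heart of the argument is a two-sided estimate of $u_\varepsilon-u$ on $\overline{\mathcal E(x_0,r)}$, with $\mathcal E(x_0,r)$ as in \eqref{eq:ourdod}. For the upper bound I apply \cref{thm:unCP} to the subsolution $u_\varepsilon$ and the supersolution $u$; because $\mathcal E(x_0,r)$ is exactly the region on which finite speed of propagation (controlled by the $p$-Lipschitz bound $A_2|x|+B_2$ of \Hp) confines the relevant initial trace to $B_r(x_0)$, the comparison yields
\[
u_\varepsilon(x,t)-u(x,t)\leqq e^{-K_3 t}\sup_{y\in\overline{B_r(x_0)}}(u_\varepsilon-u)(y,0)+e^{-K_3 t}\int_0^t e^{K_3 s}\tfrac{\beta C_1}{2}e^{\gamma s}\varepsilon^2\,ds
\]
on $\overline{\mathcal E(x_0,r)}$. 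The boundary term must be shown to be not merely nonpositive but strictly negative of order $\varepsilon^2$: this is precisely where \Uo enters, through the viscosity decreasing principle (\cite[Lemma 2.3]{BL.06}, \cite[Proposition 4.1]{L.01}), which under $|p|\geqq\theta$ on $B_r(x_0)$ for $p\in D^-u_0$ forces the inf-convolution of $u_0$ to strictly decrease and gives $\sup_{y\in\overline{B_r(x_0)}}(u_\varepsilon-u)(y,0)\leqq-\tfrac14\theta^2\varepsilon^2+o(\varepsilon^2)$.

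For the lower bound I use that $p\in D^-_x u(x,t)$ means $u(\cdot,t)$ lies, to first order, above its tangent plane at $x$; substituting this into the definition of $u_\varepsilon$ and Taylor-expanding the weight produces
\[
u_\varepsilon(x,t)-u(x,t)\geqq -\tfrac14|p|^2 e^{\gamma t}\varepsilon^2-M\varepsilon^2\omega(M\varepsilon^2),\qquad \omega(s)\to 0 \ (s\to+0),
\]
as in the outline. Combining the two displays, dividing by $\varepsilon^2$, and letting $\varepsilon\to+0$ (so the $\omega$- and $o(\varepsilon^2)$-terms vanish), the common factor $\tfrac14$ cancels, the integral is evaluated as $e^{-K_3 t}\int_0^t e^{(K_3+\gamma)s}\,ds\leqq \tfrac{e^{\gamma t}}{K_3+\gamma}$ and then bounded by $t$, and rearrangement gives $|p|^2\geqq \theta^2 e^{-(K_3+\gamma)t}-2C_1\beta t$ for $(x,t)\in\mathcal E(x_0,r)$ with $t<t_0$; with the identification $K_3+\gamma=(\tfrac{\beta}{2}+2)C_1+2K_3$ this is exactly \eqref{eq:estley1}. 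The restriction on $t_0$ is just the requirement that this right-hand side be positive, so the square root is defined and the estimate nonvacuous.

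Granting the three cited results, the theorem is essentially an assembly together with careful bookkeeping, and the one genuinely delicate point is the boundary-term estimate: one must upgrade the trivial inequality $u_\varepsilon\leqq u_0$ at $t=0$ to a strictly negative $O(\varepsilon^2)$ bound with exactly the normalizing constant $\tfrac14$ that also appears in the subdifferential lower bound---otherwise the two inequalities do not combine into anything useful---and this is where the decreasing principle and \Uo are indispensable. A secondary matter is keeping the exponential weight rate $\gamma$ and the comparison factor $e^{-K_3 t}$ consistent, so that the dependence of $H$ on $u$ (measured by $K_3$) is correctly reflected in the final exponent; this is purely a constant-tracking exercise once \cref{cor:infconvsp} fixes $\gamma$.
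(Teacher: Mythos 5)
Your proposal is correct and follows essentially the same route as the paper: equivalence with Barron--Jensen solutions (\cref{thm:equivvb}), the inf-convolution subsolution property with $\gamma=(\tfrac{\beta}{2}+2)C_1+K_3$ (\cref{cor:infconvsp}), the local comparison principle (\cref{thm:unCP}) for the upper bound, the decreasing principle under \Uo\ for the $-\tfrac14\theta^2\varepsilon^2$ boundary term, and the tangent-plane expansion for the lower bound, with the same division by $\tfrac14 e^{\gamma t}\varepsilon^2$ and passage $\varepsilon\to 0$. Your description of the integral bound is slightly imprecise (you first quote the exact evaluation $\tfrac{e^{\gamma t}-e^{-K_3 t}}{\gamma+K_3}$, which is what the paper's \cref{rem:leyobrem} uses for the sharper variant, and then revert to the crude bound $e^{\gamma t}t$ giving $2C_1\beta t$), but the conclusion $|p|^2\geqq\theta^2 e^{-(\gamma+K_3)t}-2C_1\beta t$ is exactly the paper's, so this is only a bookkeeping blemish rather than a gap.
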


We prove \cref{thm:leylb} in \cref{sec:ley}.
While we can also derive upper bound estimates of gradients in a similar manner, we omit this.
We note that \cref{thm:leyprelb} and \cref{thm:leylb} are different even if $K_3=0$.
We can potentially enhance the results by carefully studying the method in [32] (also see \cref{rem:leyobrem}).

Our second main result can be obtained using the dynamical method.
The idea is similar to that in \cite{HH.23}.
To estimate gradients of solutions, we consider the Lie equation (or contact Hamiltonian systems)
\begin{equation}
\begin{cases}
\xi^{\prime}(s)= D_p H(\xi(s),s,u_{\xi}(s),\eta(s)), \\
\eta^{\prime}(s)=-D_x H(\xi(s),s,u_{\xi}(s),\eta(s)) -D_uH(\xi(s),s,u_{\xi}(s),\eta(s))\eta(s), \\
u_{\xi}^{\prime}(s)= \langle \eta(s),\xi^{\prime}(s) \rangle-H(\xi(s),s,u_{\xi}(s),\eta(s)).
\end{cases}
\label{eq:Hsys.int} 
\end{equation}
Here, we temporarily assume that $H$ is smooth enough.
For a differentiable point $(x,t)$ of $u$, we consider a solution $(\xi,\eta,u_{\xi})$ of \eqref{eq:Hsys.int} with the terminal condition
\[
\xi(t)=x, \quad \eta(t)=D_x u(x,t), \quad u_{\xi}(t)=u(x,t).
\]
Then, $u$ is differentiable at $(\xi(s),s)$ ($s \in (0,t)$) along $\xi$, and we have
\begin{equation}
\eta(0) \in D^- u_0(\xi(0)).
\label{eq:e0Dpr.int}
\end{equation}
Integrating both sides of the second equation in \eqref{eq:Hsys.int} over $[\tau,t]$, applying Gronwall's lemma, and letting $\tau=0$, we can derive an estimate of the form
\[
|D_x u(x,t)|
\geqq |\eta(0)| e^{-(C_1+K_3)t}-\frac{C_1\beta}{C_1+K_3}(1-e^{-(C_1+K_3)t}),
\]
if $(C_1, K_3) \neq (0,0)$, where $C_1$ and $\beta$ are the constants in {\Hx}, and $K_3$ is a constant in {\Hu}.
Moreover, we can also derive an estimate of the form
\[
|x-\xi(0)| \leqq R(x,t)
\]
similarly to the first equation in \eqref{eq:Hsys.int}, where $R(x,t)$ is defined in \eqref{eq:defiR}.
Therefore, we can deduce gradient estimates from the above two estimates and \eqref{eq:e0Dpr.int}.

For a nonsmooth viscosity solution, the above argument is justified via approximation.
To apply the results of Herglotz' variational principle \cite{CCWY.19, CCJWY.20, CH.22}, we need to impose additional conditions on $H$, where smoothness and strict convexity, as follows:
\begin{itemize}
\item[{\Hr}]
$H \in C^2(\mathbb{R}^n \times [0,T] \times \mathbb{R} \times \mathbb{R}^n)$.

\item[{\Hsc}]
$D_{pp}H(x,t,u,p)$ is positive definite for all $(x,t,u,p) \in \mathbb{R}^n \times [0,T] \times \mathbb{R} \times \mathbb{R}^n$.
\end{itemize}
These additional conditions can be removed by approximating $H$ with $H_{\varepsilon}$ satisfying {\Hr} and {\Hsc}, and studing the equation
\begin{equation}
(u_{\varepsilon})_t(x,t)+H_{\varepsilon}(x,t,u_{\varepsilon}(x,t),D_x u_{\varepsilon}(x,t))=0 
\quad \mathrm{in} \ \mathbb{R}^n \times (0,T).
\label{eq:HJuap}
\end{equation}
By applying the results to approximate solutions $u_{\varepsilon}$ of \eqref{eq:HJuap}, we can derive gradient estimates for \eqref{eq:HJu}.
This is our second main result.

\begin{thm}[Gradient estimate II]\label{thm:ourlb}
Assume {\Hx}--{\Hc}.
Let $u \in C(\mathbb{R}^n \times [0,T))$ be a viscosity solution of \eqref{eq:HJu}--\eqref{eq:ID}.
Then 
\begin{enumerate}
\item[(i)]
if $(C_1, K_3) \neq (0,0)$,
\[
\begin{split}
&\phantom{\leqq} \underline{I}(x,t;u_0)e^{-(C_1+K_3)t}-\frac{C_1\beta}{C_1+K_3}(1-e^{-(C_1+K_3)t}) \\
&\leqq |p| \\
&\leqq \overline{S}(x,t;u_0)e^{(C_1+K_3)t}+\frac{C_1\beta}{C_1+K_3}(e^{(C_1+K_3)t}-1)
\end{split}
\]
for all $(x,t) \in \mathbb{R}^n \times (0,T)$ and $p \in D_x^- u(x,t)$.

\item[(ii)]
if $(C_1, K_3)=(0,0)$,
\[
\underline{I}(x,t;u_0)
\leqq |p|
\leqq \overline{S}(x,t;u_0)
\]
for all $(x,t) \in \mathbb{R}^n \times (0,T)$ and $p \in D_x^- u(x,t)$.
\end{enumerate}
\end{thm}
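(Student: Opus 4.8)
The plan is to prove the estimates in two stages: first under the additional hypotheses \Hr{} and \Hsc{}, where the contact Hamiltonian flow \eqref{eq:Hsys.int} is available, and then in general by approximating $H$. Since $u_0$ is Lipschitz and $H$ satisfies \Hx{} and \Hp{}, the upper gradient bound (see \cref{thm:ourprelb}) shows that $u$ is locally Lipschitz, hence differentiable almost everywhere by Rademacher's theorem; throughout, $C_1,\beta,K_3,A_2,B_2$ denote the structural constants from \Hx{}--\Hp{}.

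\emph{Stage 1 (smooth, strictly convex case).} Assume also \Hr{} and \Hsc{}, so that, by comparison, $u$ coincides with the Herglotz value function. Fix $(x,t)\in\mathbb{R}^n\times(0,T)$ at which $u$ is differentiable in $x$ and set $p=D_xu(x,t)$. By Herglotz' variational principle \cite{CCWY.19,CCJWY.20,CH.22}, the minimizing curve together with its dual momentum yields a solution $(\xi,\eta,u_\xi)$ of \eqref{eq:Hsys.int} on $[0,t]$ with $\xi(t)=x$, $\eta(t)=p$, $u_\xi(t)=u(x,t)$, such that $u$ is differentiable along the curve with $D_xu(\xi(s),s)=\eta(s)$ for $s\in(0,t)$, and \eqref{eq:e0Dpr.int} holds. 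Moreover, under \Hr{} and \Hsc{}, $x\mapsto u(x,t)$ is locally semiconcave for $t>0$, whence $D^-_xu(x,t)$ is empty or equals $\{D_xu(x,t)\}$; so it suffices to establish the bound at differentiable points.

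From the second equation of \eqref{eq:Hsys.int}, together with $|D_xH|\leqq C_1(\beta+|\eta|)$ from \Hx{} and $|D_uH|\leqq K_3$ from \Hu{}, the Lipschitz function $s\mapsto|\eta(s)|$ satisfies $\tfrac{d}{ds}|\eta(s)|\geqq-|\eta'(s)|\geqq-(C_1+K_3)|\eta(s)|-C_1\beta$ a.e. Integrating $\bigl(e^{(C_1+K_3)s}|\eta(s)|\bigr)'\geqq-C_1\beta e^{(C_1+K_3)s}$ over $[0,t]$ gives, when $(C_1,K_3)\neq(0,0)$, the lower bound $|p|=|\eta(t)|\geqq|\eta(0)|e^{-(C_1+K_3)t}-\tfrac{C_1\beta}{C_1+K_3}\bigl(1-e^{-(C_1+K_3)t}\bigr)$; the symmetric computation gives the matching upper bound, and if $(C_1,K_3)=(0,0)$ then $\eta'\equiv0$ and $|p|=|\eta(0)|$. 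From the first equation of \eqref{eq:Hsys.int} and $|D_pH|\leqq A_2|\xi|+B_2$ from \Hp{} we get $|\xi'(s)|\leqq A_2|\xi(s)|+B_2$, and a Gronwall estimate backward from $s=t$ yields $\int_0^t|\xi'(s)|\,ds\leqq R(x,t)$ with $R$ as in \eqref{eq:defiR}, so $\xi(0)\in\overline{B_{R(x,t)}(x)}$. Since $\eta(0)\in D^-u_0(\xi(0))$ and $\overline{B_{R(x,t)}(x)}\subset\overline{B_{R(x,t)+\delta}(x)}$ for every $\delta>0$, letting $\delta\to+0$ in the definitions of $\underline{I}$ and $\overline{S}$ gives $\underline{I}(x,t;u_0)\leqq|\eta(0)|\leqq\overline{S}(x,t;u_0)$; combining this with the bounds on $|\eta|$ proves the theorem in the smooth strictly convex case.

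\emph{Stage 2 (removing \Hr{} and \Hsc{}).} Mollify $H$ in all its variables and add $\varepsilon|p|^2$ to obtain $H_\varepsilon$ satisfying \Hx{}--\Hc{}, \Hr{} and \Hsc{}, with $C_{1,\varepsilon}\to C_1$, $K_{3,\varepsilon}\to K_3$, $A_{2,\varepsilon}\to A_2$, $B_{2,\varepsilon}\to B_2$ (and $\beta$ unchanged), and $H_\varepsilon\to H$ locally uniformly. Let $u_\varepsilon$ solve \eqref{eq:HJuap} with $u_\varepsilon(\cdot,0)=u_0$; by the local comparison principle \cref{thm:unCP} each such problem has a unique solution, and by stability of viscosity solutions $u_\varepsilon\to u$ locally uniformly. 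Given $(x,t)\in\mathbb{R}^n\times(0,T)$ and $p\in D^-_xu(x,t)$, the stability of subdifferentials under local uniform convergence furnishes $(x_\varepsilon,t_\varepsilon)\to(x,t)$ and $p_\varepsilon\in D^-_xu_\varepsilon(x_\varepsilon,t_\varepsilon)$ with $p_\varepsilon\to p$; applying Stage 1 to $u_\varepsilon$ and letting $\varepsilon\to0$, using the continuity of $R$ and the robustness of $\underline{I},\overline{S}$ under such limits (which the $\delta$-regularization in their definitions is designed to provide), yields the stated estimate. In case (ii) one may either take $H_\varepsilon$ independent of $x$ and $u$ (so $C_{1,\varepsilon}=K_{3,\varepsilon}=0$) or pass to the limit $C_{1,\varepsilon},K_{3,\varepsilon}\to0$ in case (i), noting $\tfrac{C_{1,\varepsilon}\beta}{C_{1,\varepsilon}+K_{3,\varepsilon}}(1-e^{-(C_{1,\varepsilon}+K_{3,\varepsilon})t})\leqq C_{1,\varepsilon}\beta t\to0$. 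The main obstacle is this last stage: one must ensure that the \emph{given} solution $u$ really is the local uniform limit of the $u_\varepsilon$ — which is why the local comparison principle, and not merely existence, is needed — and that the limit passage is compatible with the semicontinuity encoded in $\underline{I}$ and $\overline{S}$; a secondary difficulty is extracting from \cite{CCWY.19,CCJWY.20,CH.22} exactly the package used in Stage 1, namely the minimizing characteristic, differentiability of $u$ along it, the boundary relation \eqref{eq:e0Dpr.int}, and the semiconcavity of $u(\cdot,t)$ for $t>0$ in the presence of $u$-dependence in $H$.
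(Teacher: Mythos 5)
Your two-stage strategy --- first establish the estimate under \Hr{} and \Hsc{} via the Lie equation and Herglotz' variational principle, then remove those hypotheses by approximating $H$ --- is exactly the paper's. In Stage 1 you derive the momentum estimate via a differential inequality on $|\eta(s)|$ rather than the paper's approach of integrating $\eta'$ over $[\tau,t]$ and then invoking Gronwall's lemma; both routes are valid and produce the identical constants $e^{\pm(C_1+K_3)t}$ and $\tfrac{C_1\beta}{C_1+K_3}(1-e^{-(C_1+K_3)t})$.

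Stage 2 as written, however, has a concrete flaw: you construct $H_\varepsilon$ by mollifying and adding $\varepsilon|p|^2$. The $p$-gradient of that added term is $2\varepsilon p$, which is unbounded, so $H_\varepsilon$ violates \Hp{}. This is not harmless, because \Hp{} is precisely what bounds $|\xi'(s)|\leqq A_{2}|\xi(s)|+B_2+\varepsilon$ and hence localizes $\xi(0)$ inside $\overline{B_{R_\varepsilon(x,t)}(x)}$ with a radius converging to $R(x,t)$; without \Hp{} the argument that $\eta(0)\in D^-u_0(\xi(0))$ contributes to $\underline{I},\overline{S}$ collapses. The paper instead adds $\varepsilon\sqrt{|p|^2+1}$, whose $p$-gradient has norm at most $\varepsilon$ and whose Hessian is still positive-definite, so \Hx{}--\Hc{}, \Hr{}, \Hsc{} all survive with only $B_2\mapsto B_2+\varepsilon$. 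A second, smaller slip: you claim \cref{thm:unCP} gives uniqueness of $u_\varepsilon$, but that local comparison principle is proved under the standing hypothesis $u\leqq v$ and so does not by itself yield uniqueness. Your closing remark --- that one must ensure the given solution $u$ really is the local uniform limit of the $u_\varepsilon$ --- is a genuine concern, but it applies to the paper's argument too, which invokes only the relaxed-limit stability machinery of \cite{CIL.92} without explaining how to identify the limit with $u$.
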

The next theorem immediately follows from \cref{thm:ourleylb} under the assumption {\Uo}.

\begin{thm}[Gradient estimate II under {\Uo}]\label{thm:ourleylb}
Assume {\Hx}--{\Hc} and {\Uo}.
Let $u \in C(\mathbb{R}^n \times [0,T))$ be a viscosity solution of \eqref{eq:HJu}--\eqref{eq:ID}.
Then 
\begin{enumerate}
\item[(i)]
if $(C_1, K_3) \neq (0,0)$,
\begin{equation}
|p| 
\geqq  \theta e^{-(C_1+K_3)t}-\frac{C_1\beta}{C_1+K_3}(1-e^{-(C_1+K_3)t})
\label{eq:ourleylb}
\end{equation}
for all $(x,t) \in \mathcal{E}(x_0,r)$ and $p \in D_x^- u(x,t)$, where $\mathcal{E}(x_0,r)$ is defined in \eqref{eq:ourdod}. 

\item[(ii)]
if $(C_1, K_3)=(0,0)$,
\[
|p| \geqq \theta
\]
for all $(x,t) \in \mathcal{E}(x_0,r)$ and $p \in D_x^- u(x,t)$.
\end{enumerate}
\end{thm}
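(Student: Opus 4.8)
The plan is to obtain \cref{thm:ourleylb} as a direct consequence of the lower bound in \cref{thm:ourlb}: once one knows that on the set $\mathcal{E}(x_0,r)$ assumption {\Uo} forces $\underline{I}(x,t;u_0) \geqq \theta$, it suffices to substitute this pointwise bound into \cref{thm:ourlb}. So the only genuine work is the geometric reduction showing $\underline{I}(x,t;u_0) \geqq \theta$ on $\mathcal{E}(x_0,r)$.

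First I would fix $(x,t) \in \mathcal{E}(x_0,r)$. By the definition \eqref{eq:ourdod}, $R(x,t)+|x-x_0|<r$, so there is $\delta_0>0$ with $R(x,t)+\delta+|x-x_0|<r$ for every $\delta \in (0,\delta_0)$. For such $\delta$ and any $y \in \overline{B_{R(x,t)+\delta}(x)}$, the triangle inequality gives
\[
|y-x_0| \leqq |y-x|+|x-x_0| \leqq R(x,t)+\delta+|x-x_0| < r,
\]
hence $\overline{B_{R(x,t)+\delta}(x)} \subset B_r(x_0)$. Then {\Uo} yields $|p| \geqq \theta$ for every $y \in \overline{B_{R(x,t)+\delta}(x)}$ and $p \in D^-u_0(y)$, so that
\[
\inf\left\{ |p| \relmiddle| p \in D^-u_0(y),~y \in \overline{B_{R(x,t)+\delta}(x)} \right\} \geqq \theta
\]
(the set is nonempty, since a closed ball of positive radius contains differentiability points of the Lipschitz function $u_0$ by Rademacher's theorem). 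Letting $\delta \to +0$ gives $\underline{I}(x,t;u_0) \geqq \theta$.

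Next, in case (i), where $(C_1,K_3) \neq (0,0)$, since $e^{-(C_1+K_3)t}>0$ I would multiply $\underline{I}(x,t;u_0) \geqq \theta$ by $e^{-(C_1+K_3)t}$ and subtract $\frac{C_1\beta}{C_1+K_3}\bigl(1-e^{-(C_1+K_3)t}\bigr)$ from both sides to obtain
\[
\underline{I}(x,t;u_0)e^{-(C_1+K_3)t}-\frac{C_1\beta}{C_1+K_3}\bigl(1-e^{-(C_1+K_3)t}\bigr) \geqq \theta e^{-(C_1+K_3)t}-\frac{C_1\beta}{C_1+K_3}\bigl(1-e^{-(C_1+K_3)t}\bigr),
\]
and combining this with the left-hand inequality in \cref{thm:ourlb}(i) yields \eqref{eq:ourleylb}. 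In case (ii), where $(C_1,K_3)=(0,0)$, the left-hand inequality in \cref{thm:ourlb}(ii) together with $\underline{I}(x,t;u_0) \geqq \theta$ immediately gives $|p| \geqq \theta$.

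Since all of the analytic content is carried by \cref{thm:ourlb}, no real obstacle arises in this argument; the only points that demand a little care are verifying the ball inclusion $\overline{B_{R(x,t)+\delta}(x)} \subset B_r(x_0)$ on $\mathcal{E}(x_0,r)$ and justifying the passage to the limit $\delta \to +0$ in the definition of $\underline{I}(x,t;u_0)$.
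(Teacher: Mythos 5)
Your argument is correct and is exactly the route the paper intends: the text states that \cref{thm:ourleylb} ``immediately follows'' from \cref{thm:ourlb} (the reference in the paper is a typo, citing \cref{thm:ourleylb} instead of \cref{thm:ourlb}), and the only content to supply is the observation that {\Uo} forces $\underline{I}(x,t;u_0) \geqq \theta$ on $\mathcal{E}(x_0,r)$, which you establish cleanly via the ball inclusion $\overline{B_{R(x,t)+\delta}(x)} \subset B_r(x_0)$ for $\delta$ small and the monotone limit in $\delta$. (As a minor remark, the nonemptiness check via Rademacher is welcome but not strictly needed, since an empty index set would only make the infimum $+\infty$ and the inequality trivially true.)
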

We prove \cref{thm:ourleylb} in \cref{sec:our}.
In \cref{sec:cp}, we will discuss whether \cref{thm:leylb} or \cref{thm:ourleylb} are better.
We observe that the domain where the gradient estimates are available is the same $\mathcal{E}(x_0,r)$.
The lower bounds are the same if $(C_1, K_3)=(0,0)$, but generally not the same.
We find that the lower bound in \cref{thm:ourleylb} is larger than the one in \cref{thm:leylb} (\cref{thm:complb}).

At first glance, the dynamical method seems to be superior to the approximation method, but we believe that the approximation method can be easily applied in various situations, such as metric spaces.
Therefore, both methods are important for investigating the lower bounds for gradients.

In \cref{sec:ex}, we present some examples where the estimates are optimal in some senses.
A typical Hamiltonian we consider is of the form $H(x,t,u,p)=\lambda u+H_0(x,t,p)$ for $\lambda \in \mathbb{R}$ and $H_0: \mathbb{R}^n \times [0,T] \times \mathbb{R}^n \to \mathbb{R}$ satisfying {\Hx}, {\Hp}, and {\Hc}.
This is a special case of the modified level-set equation in \cite{H.19, BFS.24}.
In this case, we can derive another gradient estimate through direct calculation (\cref{prop:estatRu1lam}).

\subsection{Literatures}

%勾配評価
Lower gradient estimates for other equations have also been studied. 
In \cite{BLM.12}, lower gradient estimates are obtained for viscosity solutions to second-order geometric equations, and these results are applied to prove the short-time uniqueness of solutions for geometric equations with nonlocal terms. 
The proof relies on the geometric properties of the equations and the continuous dependence of solutions. 
See \cite{BL.06, BCLM.08, BCLM.09} for related results with first-order nonlocal equations, and \cite{F.08, FM.09} for related results with second-order nonlocal equations.
%In \cite{ACMR.06}, lower gradient estimates for the eikonal equation on the plane are also obtained.

Furthermore, lower bounds for the spatial Lipschitz constant are investigated in \cite{F.18, HK.23}. 
In \cite{F.18}, lower bounds with optimal exponents in terms of time are derived for linear second-order uniformly parabolic equations and Hamilton--Jacobi equations with the $p$-power Hamiltonian when the initial data is Hölder continuous. 
The proof relies on the representation formula of solutions for the equation and logarithmic Sobolev inequalities. 
More general, fully nonlinear parabolic equations, including those mentioned above, are studied in \cite{HK.23}.

%Barron-Jensen solution
The notion of Barron--Jensen solution was introduced as an extended concept of viscosity solutions in \cite{BJ.90}, which posits that the equation holds in the viscosity solution sense for test functions that touch from below. 
It is known that the notion of Barron--Jensen solutions is equivalent to the notion of viscosity solutions when the Hamiltonian $H$ is convex \cite{BJ.90, L.01}. 
%This concept is effective for lower semicontinuous functions (see also \cite{F.93}), and it has also been applied to discontinuous viscosity solutions (\cite{B.93}). 
Furthermore, the equivalence between Barron--Jensen solutions and viscosity solutions has also been shown with the quasiconvex Hamiltonian and applied to $L^{\infty}$-optimal control problems \cite{B.13}.
Recently, \cite{IWWY.22} shows the existence and uniqueness of extended real-value solutions to the Cauchy problem with general lower semicontinuous data using the notion of Barron--Jensen solutions (see also \cite{I.01}).

%Hamiltonian systems
The variational principle for contact Hamiltonian systems (or Lie equation) is known in two different contexts.
One of the variational formulations is the implicit variational principle introduced in \cite{WWY.17}, which plays an important role in connecting Weak KAM and Arbuy-Mather theory to contact Hamiltonian systems.
For more details, see \cite{WWY.19v, WWY.19a, WWY.21} and references therein.
Another variational formulation is Herglotz' variational principle studied in \cite{CCWY.19, CCJWY.20}, which we use in this paper.
This formulation explicitly provides solutions for the contact Hamilton--Jacobi equation and is related to nonholonomic constraints in \cite{LTW.18}.
In \cite{HCHZ.22}, various representation formulas are studied using Herglotz' variational principle.
These representation formulas are applied to the propagation of singularities \cite{CH.22}.
For the classical Hamiltonian systems, see \cite{CS.04}.

\subsection{Organization}
The rest of this paper is organized as follows.
In \cref{sec:pre}, we provide some definitions and known results utilized in the proofs.
In \cref{sec:ley}, we investigate the properties of the inf-convolution and prove \cref{thm:leylb} in \cref{sec:ley}.
In \cref{sec:our}, we analyze the properties of solutions of the Lie equation and prove \cref{thm:ourlb}.
These results are compared in \cref{sec:cp}.
In \cref{sec:ex}, we give some examples of solutions to \eqref{eq:HJu}--\eqref{eq:ID}.
Primary results are included in the Appendix, where we derive the local comparison principle in \cref{sec:LCP} and show the equivalence between Barron--Jensen solutions and viscosity solutions in \cref{sec:equiv}.

\section{Preliminaries}\label{sec:pre}

In this section, we will give some definitions, notations, and known results, which we will use later.

\subsection{Viscosity solution and Barron--Jensen solution}\label{sec:previs}

For the basic theory of viscosity solutions, see \cite{CIL.92, ABIL.11}.

\begin{defi}[Viscosity solution]\label{def:vissol}
We say that $u \in C(\mathbb{R}^n \times [0,T))$ is a {\em viscosity subsolution} (resp.\ {\em viscosity supersolution}) of \eqref{eq:HJu}--\eqref{eq:ID} if $u(x,0) \leqq u_0(x)$ (resp.\ $u(x,0) \geqq u_0(x)$) for all $x \in \mathbb{R}^n$, and for all $\phi \in C^1(\mathbb{R}^n \times (0,T))$ such that $u-\phi$ has a local maximum (resp. minimum) at $(x,t) \in \mathbb{R}^n \times (0,T)$, we have
\[
\phi_t(x,t) + H(x,t,u(x,t),D_x \phi(x,t)) \leqq 0
\quad \mbox{(resp.\ $\geqq 0$)}.
\]
We say that $u \in C(\mathbb{R}^n \times [0,T))$ is a {\em viscosity solution} if $u$ is both a viscosity subsolution and a viscosity supersolution.
\end{defi}

\begin{defi}[Barron--Jensen solution]\label{def:BJsol}
We say that a lower semicontinuous function $u: \mathbb{R}^n \times [0,T) \to \mathbb{R}$ is a {\em Barron--Jensen solution} of \eqref{eq:HJu}--\eqref{eq:ID} if $u(x,0) \leqq u_0(x)$ (resp.\ $u(x,0) \geqq u_0(x)$) for all $x \in \mathbb{R}^n$, and for all $\phi \in C^1(\mathbb{R}^n \times (0,T))$ such that $u-\phi$ has a local minimum at $(x,t) \in \mathbb{R}^n \times (0,T)$, we have
\[
\phi_t(x,t)+H(x,t,u(x,t),D_x\phi(x,t))=0.
\]
\end{defi}

\begin{rem}\label{rem:vis}
By definition, a Barron--Jensen solution is a viscosity supersolution.
\end{rem}

If $H=H(x,t,u,p)$ is convex in $p$, then the notion of Barron--Jensen solutions is equivalent to the notion of viscosity solutions
(\cite[Theorem 2.3]{BJ.90} and \cite[Theorem 3.1]{L.01}).
Note that in \cite{BJ.90}, $H$ is is assumed to be Lipschitz continuous in the variable $t$, and in \cite{L.01}, $H$ does not depend on the variable $u$ .
We can extend this equivalence in our setting under the assumption {\Hu}.

\begin{thm}\label{thm:equivvb}
Assume {\Hx},  {\Hu} and {\Hc}.
Then $u \in C(\mathbb{R}^n \times [0,T))$ is a viscosity solution of \eqref{eq:HJu}--\eqref{eq:ID} if and only if $u$ is a Barron--Jensen solution of \eqref{eq:HJu}--\eqref{eq:ID}.
\end{thm}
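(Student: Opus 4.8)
The plan is to treat the two implications separately, stripping off the routine parts first. In both directions $u$ is continuous by hypothesis, and both notions incorporate the initial condition \eqref{eq:ID} (for a Barron--Jensen solution, combine its defining inequality at $t=0$ with \cref{rem:vis}, which says such a solution is a viscosity supersolution). Hence for ``$\Leftarrow$'' (Barron--Jensen $\Rightarrow$ viscosity solution) it suffices to show that $u$ is a viscosity \emph{subsolution}, while for ``$\Rightarrow$'' it suffices to show that $\phi_t(x_0,t_0)+H(x_0,t_0,u(x_0,t_0),D_x\phi(x_0,t_0))\le 0$ whenever a $C^1$ function $\phi$ touches $u$ from below at $(x_0,t_0)$ --- the reverse inequality holding because $u$ is a supersolution --- or equivalently that $q+H(x_0,t_0,u(x_0,t_0),p)\le 0$ for every $(q,p)$ in the space--time subdifferential $D^-u(x_0,t_0)$.

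For ``$\Leftarrow$'' I would use the inf-convolution estimate that is the key lemma behind \cref{thm:leylb}: if $u$ is a Barron--Jensen solution, its space--time inf-convolution $u_\varepsilon$ (with an exponential weight in time) is a viscosity subsolution of \eqref{eq:HJu} up to an error term of order $\varepsilon^2$. Since $u_\varepsilon\to u$ locally uniformly and the error vanishes, stability of viscosity subsolutions shows that $u$ is a viscosity subsolution; with \cref{rem:vis}, $u$ is then a viscosity solution. Here \Hx\ controls the error coming from the spatial spread of the infimum, \Hu\ controls the error $K_3\,|u-u_\varepsilon|$ produced by evaluating $H$ at $u_\varepsilon$ instead of $u$, and \Hc\ is what makes the inf-convolution a subsolution in the first place.

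For ``$\Rightarrow$'', the substantive part and the one place where convexity is genuinely needed, the plan is the Barron--Jensen argument of \cite{BJ.90, L.01}. One passes from $u$ to its sup-convolution $u^\varepsilon$, which is locally semiconvex and decreases to $u$ locally uniformly; by an analogous sup-convolution estimate (again using \Hx, \Hu\ and the continuity of $H$), $u^\varepsilon$ is a viscosity subsolution of \eqref{eq:HJu} up to an error $o(1)$ as $\varepsilon\to0$. At each point of differentiability of $u^\varepsilon$ the subsolution inequality holds with this error; since the reachable gradients of a semiconvex function generate its whole subdifferential $D^-u^\varepsilon$ by convex combination, the convexity of $p\mapsto H$ (this is where \Hc\ enters) upgrades this to $q+H(z,u^\varepsilon(z),p)\le o(1)$ for \emph{every} $z$ and every $(q,p)\in D^-u^\varepsilon(z)$. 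Finally, given $(q,p)\in D^-u(x_0,t_0)$, the standard subdifferential-matching properties of the sup-convolution furnish points $z_\varepsilon\to(x_0,t_0)$ with $u^\varepsilon(z_\varepsilon)\to u(x_0,t_0)$ and with $D^-u^\varepsilon(z_\varepsilon)$ containing vectors tending to $(q,p)$; letting $\varepsilon\to0$ and using the continuity of $H$ gives $q+H(x_0,t_0,u(x_0,t_0),p)\le 0$. (Alternatively one may bypass the convolution by using the Legendre transform $H(x,t,u,\cdot)=\sup_v\bigl(\langle v,\cdot\rangle-L(x,t,u,v)\bigr)$, whose effective domain is bounded thanks to \Hp, and reducing the claim to transport-type inequalities along characteristics.)

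The main obstacle I anticipate is the interaction with the unknown function. In Ley's setting $H$ does not depend on $u$, so the convolved functions solve the same family of equations; here one must check that the extra term $H(\cdot,\cdot,u^\varepsilon(\cdot),p)-H(\cdot,\cdot,u(\cdot),p)$ (and its counterpart for $u_\varepsilon$) is negligible, which is exactly what \Hu\ supplies through the bound $K_3\,|u^\varepsilon-u|\to0$, and that this does not corrupt the convexity-based step. Convexity \Hc\ itself cannot be dropped, as is already the case in \cite{BJ.90, L.01}. Apart from these points, the argument is a routine adaptation of \cite[Theorem~3.1]{L.01}.
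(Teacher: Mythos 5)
Your proposal is correct, and the ``$\Leftarrow$'' half (Barron--Jensen $\Rightarrow$ viscosity solution) coincides with the paper's strategy: the inf-convolution from \cref{lem:infconvsol} / \cref{cor:infconvsp} is an approximate subsolution by convexity, and discontinuous stability passes the subsolution property to $u$; combined with \cref{rem:vis} this closes that direction. The ``$\Rightarrow$'' half is where you diverge. The paper also starts from the sup-convolution $u^{\varepsilon,\alpha}$, but then routes through \cref{lem:aesol}: since $u^{\varepsilon,\alpha}$ is locally Lipschitz it satisfies the (approximate) equation a.e., and a mollification argument with Jensen's inequality (this is where \Hc\ enters there) shows that such a function is also a viscosity supersolution of the negated equation; a second application of discontinuous stability as $\varepsilon,\alpha\to 0$ then gives the required inequality on $D^-u$. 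You instead exploit the local semiconvexity of $u^\varepsilon$ directly: the subsolution inequality at differentiability points passes by continuity to $D^*u^\varepsilon$, the identity $D^-u^\varepsilon=\mathrm{co}\,D^*u^\varepsilon$ for semiconvex functions together with convexity of $p\mapsto H$ upgrades it to all of $D^-u^\varepsilon$, and the standard minimum-shifting argument for $u^\varepsilon\ge u$, $u^\varepsilon\to u$ locally uniformly, transfers it to $D^-u$. This is the original Barron--Jensen mechanism and avoids the second mollification. Both are legitimate; the paper's detour through \cref{lem:aesol} is not wasted, because that lemma is needed independently in the proof of \cref{lem:infconvsol}, whereas your route is more self-contained within the convolution itself. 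One small inaccuracy worth flagging: the exponential weight $e^{-\gamma t}$ in \eqref{eq:infconvt} sits on the spatial penalty $|x-y|^2/\varepsilon^2$, not on the temporal one; this has no bearing on the logic of your argument. You also correctly identify that \Hu\ is exactly what absorbs the perturbation $H(\cdot,\cdot,u^\varepsilon,\cdot)-H(\cdot,\cdot,u,\cdot)$ into the error term, which is the only genuinely new point compared with \cite{L.01}.
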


We prove \cref{thm:equivvb} in \cref{sec:equiv}.
In the proof, the properties of the sup- and inf-convolution are crucial.
It is known that if $u$ is a viscosity subsolution (resp. supersolution), then the sup-convolution (resp. inf-convolution) of $u$ is a viscosity subsolution (resp. supersolution) in \eqref{eq:HJu} with an error term.
Moreover, \cite[Lemma 3.2]{L.01} states that if $u$ is a Barron--Jensen solution, then the inf-convolution of $u$ is a viscosity subsolution in the same approximate equation.
We also use this property to derive lower gradient estimates (see \cref{sec:ley}).

\begin{rem}
If the Hamiltonian $H$ is not convex, then a viscosity solution of \eqref{eq:HJu}--\eqref{eq:ID} is not necessarily a Barron--Jensen solution (see \cite[Remark 2.6]{ABI.99}).
\end{rem}

We next define several notations of generalized gradients (see \cite[Section 3]{CS.04}).

\begin{defi}\label{def:subdi}
Let $f:\mathbb{R}^n \to \mathbb{R}$ and $x \in \mathbb{R}^n$.
\begin{enumerate}
\item 
We define the {\em subgradient} $D^- f(x)$ of $f$ at $x$ as 
\[
D^-f(x)
:=\left\{ D \phi (x) \relmiddle|
\phi \in C^1(\mathbb{R}),~\mbox{$f-\phi$ attains a local minimum at $x$} \right\}.
\]
The {\em supergradient} $D^+ f(x)$ of $f$ at $x$ is defined by replacing ``a local minimum'' by ``a local maximum'' in the above definition.
Moreover, for a function $u:\mathbb{R}^n \times [0,T) \to \mathbb{R}$ and $(z,t) \in \mathbb{R}^n \times (0,T)$, we define
\[
D^{\pm}_x u(z,t)
=\{ p \mid (p,\tau) \in D^{\pm} u(z,t) \}.
\]

\item 
We define the {\em reachable gradient} $D^* f(x)$ of $f$ at $x$ as 
\[
D^*f(x)
:=\left\{ p \in \mathbb{R}^n \relmiddle|
\begin{tabular}{c}
there exists $\{ x_k \}_{k=1}^{\infty} \subset  \mathbb{R}^n \setminus \{ x \}$ such that \\
$f$ is differentiable at $x_k$ and \\
$x_k \to x$, $D f(x_k) \to p$ as $k \to \infty$
\end{tabular}
\right\}.
\]
\end{enumerate}
\end{defi}

We list some properties of generalized gradients
for a semiconcave function.
\begin{prop}[{\cite[Proposition II.4.7 (b)]{BD.97}, \cite[Proposition 3.3.4, Theorem 3.3.6]{CS.04}}]\label{prop:DpmSC}
Assume that $f$ is semiconcave 
in a nonempty convex set $K \subset \Omega$
and that $x$ is an interior point of $K$.
\begin{enumerate}
\item
If $f$ is not differentiable at $x$, 
then $D^-f(x)=\emptyset$.

\item
$D^* f(x) \neq \emptyset$.
If $f$ is differentiable at $x$, then 
$D^* f(x) = \{ Df(x) \}$.
\end{enumerate}
\end{prop}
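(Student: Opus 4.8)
The plan is to reduce the statement to three standard facts about semiconcave functions together with one elementary lemma on generalized gradients. Since $x$ is interior to $K$, fix a ball $B_\rho(x) \subset K$ and a modulus $\omega$ for $f$ on it. The first ingredient is that a semiconcave function is locally Lipschitz on the interior of $K$ (classical; see \cite[Theorem 2.1.7]{CS.04}), so by Rademacher's theorem $f$ is differentiable at almost every point of $B_\rho(x)$ and $|Df|$ is locally bounded there. This already yields $D^* f(x) \neq \emptyset$: choosing differentiability points $x_k \to x$ with $x_k \neq x$, the gradients $Df(x_k)$ lie in a bounded set, so a subsequence converges to some $p$, and $p \in D^* f(x)$ by definition.

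Next I would establish the one-sided Taylor estimate: if $y \in B_\rho(x)$ is a differentiability point of $f$, then
\[
f(z) \le f(y) + \langle Df(y), z - y\rangle + |z-y|\,\omega(|z-y|)
\]
for $z$ near $y$, which comes from the defining inequality of semiconcavity along the segment joining $y$ and $z$ combined with differentiability at $y$. Applying this with $y = x_k$ and letting $k \to \infty$ (using continuity of $f$, $Df(x_k)\to p$, and $\omega(0)=0$) shows that every $p \in D^* f(x)$ satisfies $f(z) \le f(x) + \langle p, z - x\rangle + |z-x|\,\omega(|z-x|)$ near $x$; in particular $D^* f(x) \subset D^+ f(x)$, so $D^+ f(x) \neq \emptyset$. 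For part (1), I would then invoke the elementary fact that $D^- f(x)$ and $D^+ f(x)$ cannot both be nonempty unless $f$ is differentiable at $x$: given $q \in D^- f(x)$ and $p \in D^+ f(x)$, subtracting the two first-order inequalities and taking directional difference quotients forces $p = q$, and then $f(z) = f(x) + \langle p, z-x\rangle + o(|z-x|)$. Contrapositively, non-differentiability of $f$ at $x$ forces $D^- f(x) = \emptyset$. Finally, if $f$ is differentiable at $x$, the same elementary fact gives $D^+ f(x) = \{Df(x)\}$, and since $\emptyset \neq D^* f(x) \subset D^+ f(x)$ we obtain $D^* f(x) = \{Df(x)\}$, completing part (2).

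The only genuinely technical points are the one-sided Taylor estimate and, beneath it, the local Lipschitz regularity of semiconcave functions; both are standard and I would simply cite \cite{CS.04, BD.97} rather than reprove them, so no real obstacle remains. Everything else is routine bookkeeping with the definitions of $D^-$, $D^+$ and $D^*$.
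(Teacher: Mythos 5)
This proposition is quoted verbatim from the cited textbooks (\cite{BD.97,CS.04}) and the paper itself supplies no proof, so there is no internal argument to compare against. Your reconstruction is correct and follows the standard textbook line: local Lipschitz regularity of semiconcave functions plus Rademacher's theorem gives $D^*f(x)\neq\emptyset$; the one-sided second-order estimate passed to the limit along differentiability points yields $D^*f(x)\subset D^+f(x)$; and the elementary dichotomy that $D^+f(x)$ and $D^-f(x)$ can simultaneously be nonempty only at a differentiability point (where both collapse to $\{Df(x)\}$) then delivers both (1) and (2). The two technical ingredients you choose to cite rather than reprove (local Lipschitz continuity and the upper Taylor bound for semiconcave functions) are exactly the ones cited in \cite{CS.04}, so your sketch is faithful to the references the paper points to; one minor wording issue is the parenthetical ``$\omega(0)=0$'' in the limit step, where what is actually used is continuity of the modulus (the argument of $\omega$ tends to $|z-x|$, not $0$), but this does not affect the conclusion $D^*f(x)\subset D^+f(x)$.
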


\subsection{Herglotz' variational principle and the Lie equation}

In the subsection, we explain some results of Herglotz' variational principle and the Lie equation.
For more details, see \cite{CCWY.19, CCJWY.20, CH.22}.
To use these results, the following conditions on $H$ are usually assumed:
{\Hu}, {\Hsc}, {\Hr} and 
\begin{itemize}
\item[(i)]
There exist two superlinear functions 
$\theta_0,\theta_1: [0,\infty) \to [0,\infty)$
and a constant $c_0>0$ such that
\[
\theta_0(|p|)-c_0
\leqq H(x,t,0,p)
\leqq \theta_1(|p|)
\]
for all $(x,t,p) \in \mathbb{R}^n \times [0,T] \times \mathbb{R}^n$;

\item[(ii)]
There exist two constants $C_0,C_1$ such that
\[
%|H_t(x,t,u,p)| \leqq C_1+C_2L(x,t,u,p)
\max \{ D_x H(x,t,u,p),D_p H(x,t,u,p) \} \leqq C_1+C_2H(x,t,u,p)
\]
for all $(x,t,u,p) \in \mathbb{R}^n \times [0,T] \times \mathbb{R} \times \mathbb{R}^n$.
\end{itemize}
However, these additional assumptions (i)--(ii) can be removed by using the argument in \cite[page 1421]{ACS.20} and \cite[Remark 2.3]{HH.23}.
Throughout this subsection, we only assume {\Hx}--{\Hr} and {\Hsc}. 

We define the {\em Lagrangian} 
$L:\mathbb{R}^n \times [0,T] \times \mathbb{R} \times \mathbb{R}^n \to \mathbb{R}$ associated with $H$ by
\[
L(x,t,u,q)
:=\sup_{p \in \mathbb{R}^n} \{ \langle p,q \rangle-H(x,t,u,p) \}.
\]
For $x,y \in \mathbb{R}^n$ and $t \in [0,T]$, we set
\[
\Gamma_{x,y}^t
:=\{ \xi: [0,t] \to \mathbb{R}^n \mid 
\mbox{$\xi$ is absolutely continuous in $[0,t]$, $\xi(0)=x,\xi(t)=y$} \}.
\]
For $\xi \in \Gamma_{x,y}^t$, we consider the Carath\'{e}odory equation
\begin{equation}
u_{\xi}'(s)=L(\xi(s),s,u_{\xi}(s),\xi'(s)) 
\quad (\mbox{a.e. $s \in [0,t]$})
\label{eq:caraeq}
\end{equation}
with $u_{\xi}(0)=u_0(\xi(0))$.
By the Carath\'{e}odory's theorem (see \cite[Proposition A.1]{CCJWY.20}), \eqref{eq:caraeq} has a unique solution $u_{\xi}$ if $s \mapsto L(\xi(s),s,a,\xi'(s))$ belongs to $L^1([0,t])$ for all $a \in \mathbb{R}$.

We also define the functional
\[
J(\xi):=
u_0(\xi(0))+\int_0^t L(\xi(s),s,u_{\xi}(s),\xi'(s)) \,ds
\]
and 
\[
\mathcal{A}:=
\{ \xi \in \Gamma_{x,y}^t \mid
\mbox{$s \mapsto L(\xi(s),s,a,\xi'(s))$ belongs to $L^1([0,t])$ for all $a \in \mathbb{R}$} \}.
\]

\begin{prop}[{\cite[Proposition 1]{CCWY.19}, \cite[Proposition 1.1]{CCJWY.20}}]
The functional $J(\xi)$ has a minimizer $\xi^{\ast} \in \mathcal{A}$.
\end{prop}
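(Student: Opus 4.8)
The statement to prove is the existence of a minimizer $\xi^{\ast}\in\mathcal{A}$ for the Herglotz functional $J$. The plan is to follow the standard direct method of the calculus of variations, suitably adapted to the fact that, because of the implicit dependence of $u_\xi$ on $\xi$ through the Carath\'eodory equation \eqref{eq:caraeq}, the functional $J$ is not of the classical additive form. First I would fix the endpoints $x,y$ and time $t$, and exhibit at least one competitor in $\mathcal{A}$ (for instance the affine path $\xi(s)=x+\frac{s}{t}(y-x)$, whose velocity is constant, so that $s\mapsto L(\xi(s),s,a,\xi'(s))$ is continuous on $[0,t]$ and hence in $L^1$ for every $a$); this shows $\mathcal{A}\neq\emptyset$ and that $m:=\inf_{\mathcal{A}}J<+\infty$. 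Take a minimizing sequence $\{\xi_k\}\subset\mathcal{A}$ with $J(\xi_k)\to m$.

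The next step is to extract compactness. Using the superlinearity of $\theta_0$ in hypothesis (i) and the coercivity it transfers to $L$ (namely $L(x,s,u,q)\geq \langle p,q\rangle - \theta_1(|p|)$ for all $p$, which after optimizing in $p$ gives a superlinear lower bound $L(x,s,u,q)\geq \theta_1^*(|q|)$, with $\theta_1^*$ superlinear), I would show that $\int_0^t\theta_1^*(|\xi_k'(s)|)\,ds$ stays bounded along the minimizing sequence, after first controlling the $u_{\xi_k}$ term; a Gronwall argument applied to \eqref{eq:caraeq} together with {\Hu} bounds $\|u_{\xi_k}\|_{L^\infty[0,t]}$ in terms of $\int_0^t |L(\xi_k(s),s,0,\xi_k'(s))|\,ds$, which in turn is controlled by $J(\xi_k)$ plus lower-order terms. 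This yields equi-integrability of $\{\xi_k'\}$ via the de la Vall\'ee-Poussin criterion, hence (after passing to a subsequence) weak convergence $\xi_k'\rightharpoonup \zeta$ in $L^1([0,t];\mathbb{R}^n)$ and uniform convergence $\xi_k\to\xi^{\ast}$ on $[0,t]$, where $\xi^{\ast}(s)=x+\int_0^s\zeta$, so $\xi^{\ast}$ is absolutely continuous with $\xi^{\ast}(0)=x$, $\xi^{\ast}(t)=y$, i.e. $\xi^{\ast}\in\Gamma_{x,y}^t$.

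Then comes the passage to the limit. From uniform convergence of $\xi_k$ and the locally uniform continuity of the data, the solutions $u_{\xi_k}$ of \eqref{eq:caraeq} converge uniformly on $[0,t]$ to the solution $u_{\xi^{\ast}}$ of the Carath\'eodory equation along $\xi^{\ast}$; this is a stability statement for \eqref{eq:caraeq} that follows from another Gronwall estimate exploiting {\Hu} (Lipschitz dependence of $L$, inherited from {\Hu} for $H$, in the $u$-slot) together with the convergence of the remaining arguments. Having $u_{\xi_k}\to u_{\xi^{\ast}}$ uniformly, I would apply a standard lower semicontinuity theorem for integral functionals with respect to weak-$L^1$ convergence of the velocity and uniform convergence of the remaining (state) variables — here one uses that $q\mapsto L(x,s,u,q)$ is convex (it is a sup of affine functions, being a Legendre transform) and that $L$ is bounded below by an integrable function along the sequence — to conclude
\[
\int_0^t L(\xi^{\ast}(s),s,u_{\xi^{\ast}}(s),(\xi^{\ast})'(s))\,ds
\leq \liminf_{k\to\infty}\int_0^t L(\xi_k(s),s,u_{\xi_k}(s),\xi_k'(s))\,ds.
\]
Adding $u_0(\xi_k(0))=u_0(x)\to u_0(x)=u_0(\xi^{\ast}(0))$ gives $J(\xi^{\ast})\leq \liminf_k J(\xi_k)=m$. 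Finally one checks $\xi^{\ast}\in\mathcal{A}$: since $J(\xi^{\ast})$ is finite and $L$ is bounded below along $\xi^{\ast}$ by an $L^1$ function (again by coercivity and the $L^\infty$ bound on $u_{\xi^{\ast}}$), the map $s\mapsto L(\xi^{\ast}(s),s,a,(\xi^{\ast})'(s))$ lies in $L^1([0,t])$ for each $a$, using {\Hu} to pass from the value $a=u_{\xi^{\ast}}(s)$ to an arbitrary $a$. Hence $\xi^{\ast}$ is the desired minimizer, and $J(\xi^{\ast})=m$.

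The main obstacle I expect is handling the implicit coupling: unlike the classical case, the integrand at step $k$ depends on $u_{\xi_k}$, which itself depends on the whole path, so lower semicontinuity cannot be quoted directly — one must first establish the uniform convergence $u_{\xi_k}\to u_{\xi^{\ast}}$ (a nontrivial stability result for the Carath\'eodory ODE requiring the Lipschitz-in-$u$ bound {\Hu} and a Gronwall argument) before the convexity-based semicontinuity argument applies. A secondary technical point is making the coercivity bound on $\int\theta_1^*(|\xi_k'|)$ genuinely uniform despite the $u_{\xi_k}$-dependence, which is why the $L^\infty$ bound on $u_{\xi_k}$ must be obtained first and fed back in.
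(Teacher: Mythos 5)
The paper does not prove this proposition; it is a cited result from the references \cite{CCWY.19} and \cite{CCJWY.20}, quoted without proof. So there is no ``paper's own proof'' to compare against, only the standard argument from the cited sources, which is indeed the direct method that you outline. Your plan is the right one in outline, but there is a genuine gap at the step where you pass to the limit in the Carath\'eodory equation.

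You assert that $u_{\xi_k}\to u_{\xi^{\ast}}$ uniformly ``by another Gronwall estimate exploiting {\Hu}.'' This cannot work as stated. The right-hand side of \eqref{eq:caraeq} is $L(\xi_k(s),s,u_{\xi_k}(s),\xi_k'(s))$, and it depends on the velocity $\xi_k'(s)$, which you have only shown to converge \emph{weakly} in $L^1$. A Gronwall estimate comparing $u_{\xi_k}$ to $u_{\xi^{\ast}}$ inevitably produces the term
$\int_0^\sigma \bigl|L(\xi^{\ast},s,u_{\xi^{\ast}},\xi_k') - L(\xi^{\ast},s,u_{\xi^{\ast}},(\xi^{\ast})')\bigr|\,ds$,
and weak $L^1$ convergence of $\xi_k'$ does not make this go to zero (oscillating velocities are the standard counterexample). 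In fact, if $u_{\xi_k}\to u_{\xi^{\ast}}$ uniformly were true, you would immediately get $J(\xi_k)=u_{\xi_k}(t)\to u_{\xi^{\ast}}(t)=J(\xi^{\ast})$ and the subsequent lower-semicontinuity step would be vacuous; the fact that your argument still invokes lower semicontinuity after asserting strong convergence is itself a symptom of the gap.

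The correct resolution, and the one used in the cited references, is to accept that $u_{\xi_k}$ converges uniformly (after extracting a further subsequence and checking equicontinuity) to some limit $v$ that need not a priori be $u_{\xi^{\ast}}$. Convexity of $L$ in $q$, together with the uniform convergence $\xi_k\to\xi^{\ast}$, $u_{\xi_k}\to v$ and the weak convergence $\xi_k'\rightharpoonup(\xi^{\ast})'$, gives the \emph{one-sided} integral inequality
\[
v(\sigma)-v(\tau)\ \geqq\ \int_{\tau}^{\sigma} L\bigl(\xi^{\ast}(s),s,v(s),(\xi^{\ast})'(s)\bigr)\,ds
\qquad\text{for all }0\leqq\tau<\sigma\leqq t,
\]
with $v(0)=u_0(\xi^{\ast}(0))=u_{\xi^{\ast}}(0)$. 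Then the Lipschitz dependence of $L$ in the $u$-slot (from {\Hu}) and a comparison/Gronwall argument applied to $W:=v-u_{\xi^{\ast}}$ — starting from the last time $\sigma_1$ at which $W(\sigma_1)=0$ if $W$ ever went negative — yields $v\geqq u_{\xi^{\ast}}$ on $[0,t]$. Hence $J(\xi^{\ast})=u_{\xi^{\ast}}(t)\leqq v(t)=\lim_k J(\xi_k)=m$, and $\xi^{\ast}$ is a minimizer. So the missing idea is precisely this comparison with the (possibly different) limit $v$: lower semicontinuity should be applied to derive the integral inequality for $v$, not after a (false) strong convergence of $u_{\xi_k}$. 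The rest of your outline — nontriviality of $\mathcal{A}$, coercivity via $\theta_1^*$ modulo the $|u|$-dependence corrected through an a priori $L^\infty$ bound on $u_{\xi_k}$, de la Vall\'ee-Poussin equi-integrability, and the final verification that $\xi^{\ast}\in\mathcal{A}$ — is sound.
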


Let $\mathcal{A}_{\min}$ be the set of all the minimizers of $J(\xi)$ over $\mathcal{A}$.

\begin{prop}[{\cite[Theorem 1]{CCWY.19}, \cite[Proposition 2.1(a)]{CCJWY.20}}]
Let $\xi \in \mathcal{A}_{min}$ and $u_{\xi}$ be a solution of \eqref{eq:caraeq} with $u_{\xi}(0)=u_0(\xi(0))$.
Then $\xi$ and $u_{\xi}$ are of class $C^2$, and $(\xi,u_{\xi})$ satisfies the Herglotz' equation
\[
\begin{split}
&\frac{d}{ds} D_q L(\xi(s),s,u_{\xi}(s),\xi'(s)) \\
&=D_x L_x(\xi(s),s,u_{\xi}(s),\xi'(s))
 +D_u L(\xi(s),s,u_{\xi}(s),\xi'(s))D_q L(\xi(s),s,u_{\xi}(s),\xi'(s))
\end{split}
\]
for almost all $s \in [0,t]$.
\end{prop}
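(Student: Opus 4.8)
The plan is to proceed in two stages: first establish that any minimizer $\xi\in\mathcal{A}_{\min}$ is Lipschitz and then bootstrap it (together with the associated $u_\xi$) to class $C^2$, and second compute the first variation of $J$ along interior perturbations of $\xi$ to extract the Herglotz equation. It is useful to note at the outset that, since $u_\xi'(s)=L(\xi(s),s,u_\xi(s),\xi'(s))$ and $u_\xi(0)=u_0(\xi(0))$, one has $J(\xi)=u_\xi(t)$, so minimizing $J$ is the same as minimizing the terminal value of the Carath\'{e}odory flow. Moreover, by {\Hr} and {\Hsc} the Lagrangian $L$ is $C^2$ and $q\mapsto D_qL(x,t,u,q)$ is a $C^1$ diffeomorphism with inverse $p\mapsto D_pH(x,t,u,p)$, and at dual points $D_xL=-D_xH$ and $D_uL=-D_uH$; these Legendre relations will drive the bootstrap.

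I would begin with the regularity. A Tonelli--Clarke type argument (of the kind used in the works cited in the statement) shows that every $\xi\in\mathcal{A}_{\min}$ is Lipschitz, so that $p(s):=D_qL(\xi(s),s,u_\xi(s),\xi'(s))$ is well defined and bounded for a.e.\ $s$. For $\phi\in C_c^\infty((0,t);\mathbb{R}^n)$ the perturbed curves $\xi_\varepsilon:=\xi+\varepsilon\phi$ lie in $\mathcal{A}$, and differentiating the Carath\'{e}odory equation \eqref{eq:caraeq} in $\varepsilon$ shows that $w(s):=\tfrac{\partial}{\partial\varepsilon}\big|_{\varepsilon=0}u_{\xi_\varepsilon}(s)$ satisfies $w(0)=0$ and the linear ODE
\[
w'(s)=D_xL\cdot\phi(s)+D_uL\,w(s)+D_qL\cdot\phi'(s),
\]
all $L$-derivatives being evaluated along $(\xi(s),s,u_\xi(s),\xi'(s))$. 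Solving this with the integrating factor $\Lambda(s):=\exp\!\big(-\!\int_0^s D_uL\,d\tau\big)$ gives $\Lambda(t)w(t)=\int_0^t\Lambda(s)\big(D_xL\cdot\phi+D_qL\cdot\phi'\big)\,ds$; since $J(\xi_\varepsilon)=u_{\xi_\varepsilon}(t)$ and $\xi$ minimizes $J$, the left-hand side vanishes, and the du Bois--Reymond lemma then yields a constant $c\in\mathbb{R}^n$ with
\[
\Lambda(s)\,p(s)=c+\int_0^s\Lambda(\tau)\,D_xL(\xi(\tau),\tau,u_\xi(\tau),\xi'(\tau))\,d\tau .
\]

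From here the bootstrap is routine: $\Lambda$ is Lipschitz and bounded away from $0$, so the right-hand side above is absolutely continuous, hence $p$ is continuous; inverting the Legendre transform, $\xi'(s)=D_pH(\xi(s),s,u_\xi(s),p(s))$ is continuous in $s$, so $\xi\in C^1$; then $u_\xi\in C^1$ by \eqref{eq:caraeq}, so the integrand in the displayed identity is $C^1$, so $p\in C^1$, so $\xi'\in C^1$, i.e.\ $\xi\in C^2$, and finally $u_\xi\in C^2$ by \eqref{eq:caraeq} once more. Differentiating the identity $\Lambda D_qL=c+\int_0^s\Lambda D_xL\,d\tau$ (now legitimate) gives $\tfrac{d}{ds}(\Lambda D_qL)=\Lambda D_xL$; expanding with $\Lambda'=-\Lambda D_uL$ and dividing by $\Lambda(s)>0$ produces exactly $\tfrac{d}{ds}D_qL=D_xL+D_uL\,D_qL$ along $(\xi(s),s,u_\xi(s),\xi'(s))$, which is the asserted Herglotz equation.

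The step I expect to be the main obstacle is the very first one — the a priori Lipschitz bound on an arbitrary minimizer $\xi\in\mathcal{A}_{\min}$. This is where the coercivity and superlinearity of $L$ (inherited from {\Hr}, {\Hsc} and the growth of $H$) genuinely enter, and where the classical Tonelli--Clarke regularity argument must be adapted to accommodate the dependence of $L$ on the action variable $u_\xi$; once Lipschitz regularity is in hand, both the first-variation computation and the bootstrap to $C^2$ are essentially mechanical.
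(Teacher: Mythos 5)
The paper states this proposition as a quoted result from \cite[Theorem 1]{CCWY.19} and \cite[Proposition 2.1(a)]{CCJWY.20} and gives no proof, so there is no in-paper argument to compare against. Your derivation is nonetheless correct and it is essentially the standard route used in the Herglotz-variational-principle literature that the paper cites: the observation that $J(\xi)=u_\xi(t)$ reduces minimization of $J$ to minimization of the terminal value of the Carath\'eodory flow; the linearized (variational) equation for $w=\partial_\varepsilon u_{\xi_\varepsilon}\big|_{\varepsilon=0}$ combined with the integrating factor $\Lambda(s)=\exp\bigl(-\int_0^s D_uL\,d\tau\bigr)$ and the du Bois--Reymond lemma produces the integral form $\Lambda\,D_qL = c+\int_0^s\Lambda\,D_xL\,d\tau$; and the Legendre relations $D_qL=p\leftrightarrow q=D_pH$, $D_xL=-D_xH$, $D_uL=-D_uH$ drive the $C^0\Rightarrow C^1\Rightarrow C^2$ bootstrap and, upon differentiation, yield exactly $\tfrac{d}{ds}D_qL=D_xL+D_uL\,D_qL$ (the statement's ``$D_xL_x$'' is a typo for $D_xL$). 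Two points you rightly treat as the load-bearing ones but dispatch quickly are precisely where the cited papers expend their effort: (a) the a priori Lipschitz bound on an arbitrary minimizer, which is a genuine Tonelli-type argument adapted to the $u$-dependent Lagrangian and is needed before $p(s)=D_qL(\xi(s),s,u_\xi(s),\xi'(s))$ even makes sense in $L^\infty$; and (b) the differentiability of $\varepsilon\mapsto u_{\xi_\varepsilon}(s)$, which requires the parameter-dependence theory for Carath\'eodory ODEs (it does hold here since $L$ is $C^2$ by {\Hr}, {\Hsc} and uniformly Lipschitz in $u$ by {\Hu}, which also gives $\Lambda$ bounded above and below). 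With those two technical inputs supplied by the cited references, your argument is complete.
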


For $\xi \in \mathcal{A}_{\min}$ and $u_{\xi}$ defined in \eqref{eq:caraeq} with $u_{\xi}(0)=u_0(\xi(0))$, we define $\eta:[0,t] \to \mathbb{R}^n$ by
\[
\eta(s):=D_q L(\xi(s),s,u_{\xi}(s),\xi'(s))
\quad (s \in [0,t]).
\]
The curve $\eta$ is called the {\em dual arc associated with $\xi$}.

\begin{prop}[{\cite[Proposition 2.1(b)]{CCJWY.20}}]
Let $\xi \in \mathcal{A}_{min}$ and $u_{\xi}$ be a solution of \eqref{eq:caraeq} with $u_{\xi}(0)=u_0(\xi(0))$.
Then $\eta$ is of class $C^2$ and $(\xi,u_{\xi},\eta)$ satisfies the Lie equation
\begin{equation}
\begin{cases}
\mathrm{(a)} \ 
\xi'(s)= D_p H(\xi(s),s,u_{\xi}(s),\eta(s)), \\
\mathrm{(b)} \ 
\eta'(s)=-D_x H(\xi(s),s,u_{\xi}(s),\eta(s)) -D_uH(\xi(s),s,u_{\xi}(s),\eta(s))\eta(s), \\
\mathrm{(c)} \ 
u_{\xi}'(s)= \langle \eta(s),\xi'(s) \rangle-H(\xi(s),s,u_{\xi}(s),\eta(s))
\end{cases}
%(s \in (0,t_0))
\label{eq:LHsys}
\end{equation}
for all $s \in [0,t]$.
\end{prop}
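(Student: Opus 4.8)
The plan is to obtain the Lie equation \eqref{eq:LHsys} directly from the Herglotz equation of the preceding proposition by invoking the classical Legendre duality between a strictly convex $C^2$ Hamiltonian and its conjugate, and then to bootstrap the regularity of $\eta$.

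First I would record the duality identities. Fix $(x,t,u)$. By {\Hr} and {\Hsc}, $p \mapsto H(x,t,u,p)$ is $C^2$ and strictly convex, so $p \mapsto D_pH(x,t,u,p)$ is a $C^1$ diffeomorphism whose inverse is $q \mapsto D_qL(x,t,u,q)$; in particular $L$ is $C^2$ on the region where the transform is defined. For a conjugate pair, i.e.\ $q = D_pH(x,t,u,p) \Longleftrightarrow p = D_qL(x,t,u,q)$, the supremum defining $L$ is attained at that $p$, whence the Fenchel equality $H(x,t,u,p)+L(x,t,u,q) = \langle p,q \rangle$ and, by the envelope theorem, $D_xL(x,t,u,q) = -D_xH(x,t,u,p)$ and $D_uL(x,t,u,q) = -D_uH(x,t,u,p)$.

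Now set $\eta(s) := D_qL(\xi(s),s,u_\xi(s),\xi'(s))$. Duality immediately gives \eqref{eq:LHsys}(a), $\xi'(s) = D_pH(\xi(s),s,u_\xi(s),\eta(s))$. Substituting the Fenchel equality into the Carath\'{e}odory equation $u_\xi'(s) = L(\xi(s),s,u_\xi(s),\xi'(s))$ yields \eqref{eq:LHsys}(c), $u_\xi'(s) = \langle \eta(s),\xi'(s)\rangle - H(\xi(s),s,u_\xi(s),\eta(s))$. For \eqref{eq:LHsys}(b) I would start from the Herglotz equation; its left-hand side is $\eta'(s)$ by definition, and on its right-hand side I replace $D_qL(\xi(s),s,u_\xi(s),\xi'(s))$ by $\eta(s)$ and use $D_xL = -D_xH$, $D_uL = -D_uH$ evaluated at $(\xi(s),s,u_\xi(s),\eta(s))$, obtaining $\eta'(s) = -D_xH(\xi(s),s,u_\xi(s),\eta(s)) - D_uH(\xi(s),s,u_\xi(s),\eta(s))\,\eta(s)$. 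Finally, the preceding proposition gives $\xi,u_\xi \in C^2$, and we have just observed $L \in C^2$, so $\eta = D_qL(\xi,\cdot,u_\xi,\xi')$ is a priori $C^1$; feeding this into the equation for $\eta'$ just derived, whose right-hand side is a composition of $C^1$ maps since $H \in C^2$, shows $\eta' \in C^1$, hence $\eta \in C^2$.

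The main obstacle is carrying out the first step honestly: one needs the Legendre transform $L$ to be globally $C^2$ and the conjugacy and envelope identities to be valid along the minimizing curves, which is not automatic without superlinear growth of $H$. This is exactly what the truncation/approximation argument cited from \cite[page 1421]{ACS.20} and \cite[Remark 2.3]{HH.23} provides, reducing matters to the case where assumptions (i)--(ii) above hold; once that is granted, the remaining computations are routine.
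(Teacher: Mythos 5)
The paper does not prove this proposition; it is quoted verbatim from \cite[Proposition 2.1(b)]{CCJWY.20}, and the paper relies on that reference (together with the remarks in \cite{ACS.20} and \cite{HH.23} allowing the superlinearity hypotheses to be dropped). Your derivation via Legendre duality is the standard one and is exactly how the cited reference proceeds: the Fenchel identity gives (a) and (c), substituting $D_xL=-D_xH$, $D_uL=-D_uH$ into the Herglotz equation gives (b), and bootstrapping through the ODE for $\eta$ upgrades $\eta$ from $C^1$ to $C^2$. Your closing caveat — that one needs $L$ to be globally defined and $C^2$, which requires coercivity of $H$ rather than just {\Hsc}, and that this is supplied by the approximation in \cite{ACS.20} and \cite[Remark 2.3]{HH.23} — is precisely the point the paper itself flags just before this sequence of propositions, so the argument is both correct and faithful to the intended framework.
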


We next establish the connection between the above results and \eqref{eq:HJu}--\eqref{eq:ID}.
We define
\begin{equation}
u(x,t)
:=\inf_{\xi \in \mathcal{C}(x,t)} 
\left\{ u_0(\xi(0))+ \int_0^t L(\xi(s),s,u_{\xi}(s),\xi'(s)) \,ds \right\},
\label{eq:uvf}
\end{equation}
where $u_{\xi}$ defined \eqref{eq:caraeq} with $u_{\xi}(0)=u_0(\xi(0))$ and 
\[
\mathcal{C}(x,t)
:=\{ \xi: [0,t] \to \mathbb{R}^n \mid
\mbox{$\xi$ is absolutely continuous, $\xi(t)=x$} \}.
\]
As mentioned above, \eqref{eq:uvf} has a minimizer $\xi \in C^2([0,t])$.
Let $\mathcal{C}_{\min}(x,t)$ be the set of all the minimizers of \eqref{eq:uvf}.

\begin{prop}[{\cite[Proposition 3.1]{CCJWY.20}}]
Let $(x,t) \in \mathbb{R}^n \times (0,t)$ and $\xi \in \mathcal{C}(x,t)$.
Then, for all $\tau \in [0,t]$,
we have
\begin{equation}
u(x,t)
\leqq \int_{\tau}^t L(\xi(s),s,u_{\xi}(s),\xi'(s))\,ds +u(\xi(\tau),\tau),
\label{eq:DPP}
\end{equation}
where $u_{\xi}$ defined in \eqref{eq:caraeq} in $[\tau,t]$ with $u_{\xi}(\tau)=u(\xi(\tau),\tau)$.
Moreover, the equality holds in \eqref{eq:DPP} if and only if $\xi \in \mathcal{C}_{\min}(x,t)$.
\end{prop}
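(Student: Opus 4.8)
The plan is to read \eqref{eq:DPP} as the dynamic programming (Bellman) principle for the variational formula \eqref{eq:uvf}. The starting point is that integrating the Carath\'{e}odory equation \eqref{eq:caraeq} recovers the running cost from the endpoint value of the associated solution: one has $u_\xi(t)=u_\xi(0)+\int_0^t L(\xi(s),s,u_\xi(s),\xi'(s))\,ds$, so that $J(\xi)=u_\xi(t)$ whenever $u_\xi(0)=u_0(\xi(0))$, and, likewise, $w(t)=u(\xi(\tau),\tau)+\int_\tau^t L(\xi(s),s,w(s),\xi'(s))\,ds$ whenever $w$ solves \eqref{eq:caraeq} on $[\tau,t]$ with $w(\tau)=u(\xi(\tau),\tau)$. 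Thus \eqref{eq:DPP} is exactly the inequality $u(x,t)\leqq w(t)$, and equality in \eqref{eq:DPP} means $w(t)=u(x,t)$. I also record that \Hu{} transfers to the Lagrangian: since $u\mapsto\langle p,q\rangle-H(x,t,u,p)$ is $K_3$-Lipschitz for each fixed $p$, the supremum $u\mapsto L(x,t,u,q)$ is $K_3$-Lipschitz as well. All curves below are tacitly taken in the effective domain of $J$ (otherwise the right-hand side of \eqref{eq:DPP} is $+\infty$ and there is nothing to prove).

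First I would prove the inequality. Fix $\xi\in\mathcal{C}(x,t)$ and $\tau\in[0,t]$, and let $\varepsilon>0$. Choose $\zeta\in\mathcal{C}(\xi(\tau),\tau)$ with $u_\zeta(\tau)\leqq u(\xi(\tau),\tau)+\varepsilon$, where $u_\zeta$ solves \eqref{eq:caraeq} on $[0,\tau]$ with $u_\zeta(0)=u_0(\zeta(0))$, so that its endpoint value $u_\zeta(\tau)$ equals the cost of $\zeta$; note also $u_\zeta(\tau)\geqq u(\xi(\tau),\tau)$. Concatenating $\zeta$ on $[0,\tau]$ with $\xi|_{[\tau,t]}$ yields an absolutely continuous curve $\widetilde{\xi}\in\mathcal{C}(x,t)$, and the solution $u_{\widetilde{\xi}}$ of \eqref{eq:caraeq} along $\widetilde{\xi}$ with $u_{\widetilde{\xi}}(0)=u_0(\widetilde{\xi}(0))$ coincides with $u_\zeta$ on $[0,\tau]$ by uniqueness; hence $|u_{\widetilde{\xi}}(\tau)-w(\tau)|\leqq\varepsilon$. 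On $[\tau,t]$ both $u_{\widetilde{\xi}}$ and $w$ solve the same Carath\'{e}odory equation along $\xi$, and $L$ is $K_3$-Lipschitz in its third argument, so Gronwall's lemma gives $|u_{\widetilde{\xi}}(t)-w(t)|\leqq\varepsilon e^{K_3 T}$. Therefore $u(x,t)\leqq J(\widetilde{\xi})=u_{\widetilde{\xi}}(t)\leqq w(t)+\varepsilon e^{K_3 T}$, and letting $\varepsilon\to0$ gives \eqref{eq:DPP}.

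Next I would characterise the equality case. If $\xi\in\mathcal{C}_{\min}(x,t)$, let $v$ solve \eqref{eq:caraeq} along $\xi$ on $[0,t]$ with $v(0)=u_0(\xi(0))$, so $v(t)=J(\xi)=u(x,t)$. I claim $v(\tau)=u(\xi(\tau),\tau)$ for every $\tau\in[0,t]$: one always has $v(\tau)\geqq u(\xi(\tau),\tau)$ because $\xi|_{[0,\tau]}$ is a competitor for $u(\xi(\tau),\tau)$, and if this inequality were strict, gluing a minimizer $\zeta$ for $u(\xi(\tau),\tau)$ (which exists by the proposition preceding \eqref{eq:uvf}) with $\xi|_{[\tau,t]}$ would produce $\widehat{\xi}\in\mathcal{C}(x,t)$ with $u_{\widehat{\xi}}(\tau)=u(\xi(\tau),\tau)<v(\tau)$; since two solutions of \eqref{eq:caraeq} along the same curve cannot cross, $u_{\widehat{\xi}}(t)<v(t)=u(x,t)$, contradicting the definition of $u(x,t)$. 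Hence $v(\tau)=w(\tau)$, so $v\equiv w$ on $[\tau,t]$ and $w(t)=v(t)=u(x,t)$, i.e.\ equality holds in \eqref{eq:DPP}. Conversely, taking $\tau=0$ we have $w(0)=u(\xi(0),0)=u_0(\xi(0))=v(0)$, so $w\equiv v$ on $[0,t]$ and equality in \eqref{eq:DPP} reads $u(x,t)=v(t)=J(\xi)$, that is $\xi\in\mathcal{C}_{\min}(x,t)$; together with the forward implication this shows that equality in \eqref{eq:DPP} for every $\tau\in[0,t]$ is equivalent to $\xi\in\mathcal{C}_{\min}(x,t)$.

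The main obstacle is the first step: because the Carath\'{e}odory solution along the glued curve is re-initialised at $s=0$ rather than at $s=\tau$, its value at time $\tau$ is only $\varepsilon$-close to $u(\xi(\tau),\tau)$, and this error must be carried to time $t$ in a stable manner --- precisely where the quantitative Lipschitz dependence of $L$ on the $u$-variable, inherited from \Hu{}, together with Carath\'{e}odory's existence-uniqueness theorem, is needed to legitimise the Gronwall estimate. Everything else is routine bookkeeping of the kind carried out in \cite{CCWY.19, CCJWY.20}: concatenations of absolutely continuous curves are absolutely continuous and stay in the effective domain of $J$, the infimum defining $u$ at the intermediate point is attained up to $\varepsilon$ (exactly, in the minimizer argument), and $u(\cdot,0)=u_0$. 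When the superlinearity and growth hypotheses on $H$ used in the classical Herglotz theory are dropped, $L$ need not be everywhere finite, and one reduces to the situation above by the approximation argument of \cite[page 1421]{ACS.20} (see also \cite[Remark 2.3]{HH.23}).
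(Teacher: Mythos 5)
The paper does not prove this proposition: it is quoted verbatim from \cite[Proposition 3.1]{CCJWY.20} as a known result (the whole subsection merely records the Herglotz machinery that is used later), so there is no ``paper's own proof'' to compare against. Your argument is a correct and complete dynamic-programming proof, and it proceeds along the standard lines one would expect the cited reference to follow: rewrite both sides of \eqref{eq:DPP} as terminal values of Carath\'eodory solutions, reduce the inequality to a comparison of two solutions of the same scalar ODE with nearby data, and invoke Gronwall via the $K_3$-Lipschitz dependence of $L$ on the $u$-slot inherited from \Hu. The one point worth flagging is the meaning of ``equality holds'': as you note, minimality of $\xi$ gives equality in \eqref{eq:DPP} for \emph{every} $\tau$, while the converse only needs $\tau=0$ (where \eqref{eq:DPP} literally reads $u(x,t)\leqq J(\xi)$); for a single interior $\tau$, equality does not by itself force $\xi$ to be a minimizer (only the concatenation $\widehat\xi$ built from a minimizer on $[0,\tau]$ is). Your closing sentence states the correct quantified form of the equivalence, and the rest --- concatenation preserving absolute continuity and the effective domain, monotonicity/no-crossing from Carath\'eodory uniqueness, and the remark about removing the growth hypotheses by approximation --- is sound.
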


\begin{prop}[{\cite[Proposition 3.3]{CCJWY.20}, {\cite[Proposition 2.3(3)]{CH.22}}}]
The function $u(x,t)$ defined in \eqref{eq:uvf} is a viscosity solution of \eqref{eq:HJu}--\eqref{eq:ID}.
Moreover, $u$ is Lipschitz continuous and locally semiconcave in $\mathbb{R}^n \times (0,T)$.
\end{prop}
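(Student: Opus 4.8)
The plan is to derive all three assertions from the dynamic programming principle \eqref{eq:DPP} and the regularity of minimizers recorded above, carried out in the order: continuity, then the sub-/supersolution inequalities, then local semiconcavity. No new structural input beyond what is already available should be needed, the $u$-dependence of $L$ being absorbed by Gronwall estimates through {\Hu}.

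\emph{Step 1 (continuity and the initial datum).} That $u(\cdot,0)=u_0$ is immediate from \eqref{eq:uvf}, since $\mathcal{C}(x,0)$ consists of the single degenerate curve and the integral is empty. For $t>0$, an upper bound on $u$ comes from testing \eqref{eq:uvf} with the constant curve $\xi\equiv x$, and I would obtain a modulus of continuity (in fact local Lipschitz continuity under {\Hx}--{\Hr} and {\Hsc}) from the usual comparison-of-competitors argument: given near-minimizers for $u(x,t)$ and $u(y,s)$, one reparametrizes and translates one of them to reach the other endpoint and estimates the change in the functional, using the coercivity of $L$ in its last variable and a Gronwall estimate to control the drift of $u_\xi$ in \eqref{eq:caraeq}. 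Continuity up to $t=0$ (from above) follows from the same ingredients.

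\emph{Step 2 (sub- and supersolution).} For the subsolution property I would fix $\phi\in C^1$ such that $u-\phi$ has a local maximum at $(x,t)$ and a vector $q\in\mathbb{R}^n$, and apply \eqref{eq:DPP} with $\tau=t-h$ and the affine curve $\xi(s)=x+(s-t)q$. For $h$ small, $(\xi(t-h),t-h)$ lies in the neighborhood of the maximum, so $u(\xi(t-h),t-h)\leqq u(x,t)-\phi(x,t)+\phi(\xi(t-h),t-h)$; combining with \eqref{eq:DPP}, dividing by $h$ and letting $h\to0^+$ gives $\phi_t(x,t)+\langle D_x\phi(x,t),q\rangle\leqq L(x,t,u(x,t),q)$, and taking the supremum over $q$ together with the fact that $H$ is the Legendre transform of $L$ (legitimate by {\Hc}) yields $\phi_t(x,t)+H(x,t,u(x,t),D_x\phi(x,t))\leqq0$. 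For the supersolution property I would fix $\phi$ with $u-\phi$ having a local minimum at $(x,t)$, take $\xi\in\mathcal{C}_{\min}(x,t)$ — a $C^2$ curve, so $\xi'(t)$ is well-defined — and use that equality holds in \eqref{eq:DPP} for minimizers. The same computation with reversed inequality gives $\phi_t(x,t)+\langle D_x\phi(x,t),\xi'(t)\rangle\geqq L(x,t,u(x,t),\xi'(t))$, and since $H(x,t,u(x,t),D_x\phi(x,t))\geqq\langle D_x\phi(x,t),\xi'(t)\rangle-L(x,t,u(x,t),\xi'(t))$ by the definition of $L$, we conclude $\phi_t(x,t)+H(x,t,u(x,t),D_x\phi(x,t))\geqq0$.

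\emph{Step 3 (local semiconcavity) and the main obstacle.} Fixing a compact $Q\subset\mathbb{R}^n\times(0,T)$, a point $(x,t)\in Q$, a minimizer $\xi\in\mathcal{C}_{\min}(x,t)$, and small $z\in\mathbb{R}^n$, I would test \eqref{eq:uvf} at $(x\pm z,t)$ with the perturbed curves $\xi_\pm(s):=\xi(s)\pm(s/t)z$. Since $\xi_\pm(0)=\xi(0)$, the boundary terms $u_0(\xi_\pm(0))$ cancel in $u(x+z,t)+u(x-z,t)-2u(x,t)$; a second-order perturbation analysis of \eqref{eq:caraeq} gives $u_{\xi_+}(s)+u_{\xi_-}(s)-2u_\xi(s)=O(|z|^2)$ uniformly in $s$, and a second-order Taylor expansion of $L$ along $\xi_\pm$ — legitimate since $H\in C^2$ by {\Hr}, hence $L\in C^2$ on the relevant region by {\Hsc} — makes the first-order terms cancel, leaving $u(x+z,t)+u(x-z,t)-2u(x,t)\leqq C(Q)|z|^2$; perturbing $t$ as well gives joint semiconcavity in $\mathbb{R}^n\times(0,T)$. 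The main difficulty is to make this expansion quantitative and uniform on $Q$: one needs a priori bounds on $\|\xi'\|_{L^\infty}$ and $\|u_\xi\|_{L^\infty}$ over all minimizers with $(x,t)\in Q$, so that the second derivatives of $L$ are controlled on a fixed compact set. This compactness of minimizers is where the structure really bites; without the auxiliary coercivity/growth conditions it is recovered through the approximation argument of \cite[p.~1421]{ACS.20} (see also \cite[Remark 2.3]{HH.23}), after which the $C^2$-expansion and the Gronwall control of the $u$-dependence proceed as above.
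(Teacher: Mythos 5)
The paper does not prove this proposition; it is imported verbatim from \cite{CCJWY.20} and \cite{CH.22}, and those references follow exactly the dynamic-programming / minimizer-perturbation route you sketch. Your outline is sound — the subsolution inequality from affine competitors and the Legendre identity $H=L^{*}$, the supersolution inequality from a $C^2$ minimizer with equality in \eqref{eq:DPP}, and semiconcavity from the $\pm(s/t)z$ perturbation with a Gronwall bound for the coupled Carath\'eodory equation \eqref{eq:caraeq} — so the only thing to add is the caveat you already make: one needs uniform compactness of minimizers (hence the auxiliary growth conditions (i)–(ii) or the approximation argument of \cite{ACS.20}) to make the second-order expansion in Step 3 quantitative, and under {\Hp} the effective domain of $L(x,t,u,\cdot)$ is a ball, so the Legendre involution must be checked on that domain.
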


For a solution $u$, let us define 
\[
\mathcal{R}(u)
:=\{ (x,t) \in \mathbb{R}^n \times (0,T) \mid 
\mbox{$u$ is differentiable at $(x,t)$} \}.
\]

\begin{prop}[{\cite[Proposition 2.6]{CH.22}}]\label{thm:ACS20a}
Let $u$ be a viscosity solution of \eqref{eq:HJu}--\eqref{eq:ID}.
Let $(x,t) \in \mathbb{R}^n \times (0,T)$.
For $(p,\tau) \in D^* u(x,t)$, let $(\xi,\eta,u_{\xi})$ be a solution of \eqref{eq:LHsys} with the terminal condition
\begin{equation}
\xi(t)=x, \quad \eta(t)=p, \quad u_{\xi}(t)=u(x,t).
\label{eq:xeuTCp}
\end{equation}
Then $\xi \in \mathcal{C}_{\min}(x,t)$. 
Moreover, the map 
\[
D^* u(x,t) \ni (p,\tau) 
\mapsto 
\xi \in \mathcal{C}_{\min}(x,t)
\]
is bijective.
\end{prop}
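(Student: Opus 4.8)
The plan is to set up a correspondence between differentiability of $u$ and uniqueness of minimizers of \eqref{eq:uvf}, and then exploit it. Throughout I use that {\Hr} makes the right-hand side of the Lie equation \eqref{eq:LHsys} locally Lipschitz, so that the terminal value problem \eqref{eq:LHsys}--\eqref{eq:xeuTCp} has a unique solution depending continuously on the terminal data and on the length of the time interval, and that, by the Herglotz variational theory recalled above, minimizers of \eqref{eq:uvf} are of class $C^2$ and solve \eqref{eq:LHsys} together with their dual arcs. First, the assignment $(p,\tau)\mapsto\xi$ is well defined: given $(p,\tau)\in D^*u(x,t)$, the solution of \eqref{eq:LHsys}--\eqref{eq:xeuTCp} extends to all of $[0,t]$ because $\xi'$ is bounded via {\Hp} and the global Lipschitz bound on $u$, $\eta$ is then bounded via the $\xi$-equation in \eqref{eq:LHsys}, and $u_\xi$ is controlled through the $u_\xi$-equation in \eqref{eq:LHsys}. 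Evaluating $u_t+H(x,t,u,D_xu)=0$ along differentiability points of $u$ converging to $(x,t)$ also shows $\tau=-H(x,t,u(x,t),p)$, so in fact $\xi$ depends only on $p$.

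The main preliminary step is the dictionary: $u$ is differentiable at a point $(y,s)\in\mathbb{R}^n\times(0,T)$ if and only if the minimizer of \eqref{eq:uvf} at $(y,s)$ is unique, in which case its dual arc $\eta$ satisfies $\eta(s)=D_xu(y,s)$ and $\partial_t u(y,s)=-H(y,s,u(y,s),\eta(s))$. For this I would use the first-variation identity $\eta(s)\in D_x^+u(y,s)$, valid for the dual arc of any minimizer (a perturbation-of-endpoint computation; see \cite{CCJWY.20, CH.22}), together with local semiconcavity of $u$, so that $D^+u$ is the closed convex hull of $D^*u$ and $D^+u$ being a singleton is equivalent to differentiability. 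If $u$ is differentiable at $(y,s)$, every minimizer's dual arc terminates at the unique element $D_xu(y,s)$ and its running value at $u(y,s)$, so by uniqueness for \eqref{eq:LHsys} all minimizers coincide. Conversely, if the minimizer at $(y,s)$ is unique, then for any $q\in D^*u(y,s)$ one chooses differentiability points $(y_k,s_k)\to(y,s)$ with $D_xu(y_k,s_k)\to q$, takes their (unique) minimizers, and passes to the limit using uniform a priori bounds and continuous dependence of \eqref{eq:LHsys} on data: the limit is a minimizer at $(y,s)$ whose dual arc ends at $q$, hence by uniqueness it is the given one and $q=\eta(s)$; thus $D^*u(y,s)=D^+u(y,s)=\{\eta(s)\}$ and $u$ is differentiable at $(y,s)$. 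This is not circular, since the equivalence is derived from the first-variation identity, semiconcavity, and ODE uniqueness rather than assumed.

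With the dictionary in hand, $\xi\in\mathcal{C}_{\min}(x,t)$ follows by the same limiting scheme: choose differentiability points $(x_k,t_k)\to(x,t)$ with $(D_xu(x_k,t_k),\partial_t u(x_k,t_k))\to(p,\tau)$; their unique minimizers $\xi_k$, dual arcs $\eta_k$ and running values $u_{\xi_k}$ solve \eqref{eq:LHsys} with terminal data converging to $(x,p,u(x,t))$, hence converge in $C^1$ to the solution of \eqref{eq:LHsys}--\eqref{eq:xeuTCp}; letting $k\to\infty$ in $u(x_k,t_k)=u_0(\xi_k(0))+\int_0^{t_k}L(\xi_k,s,u_{\xi_k},\xi_k')\,ds$ and using continuity of $u$, $u_0$, $L$ gives $u(x,t)=u_0(\xi(0))+\int_0^t L(\xi,s,u_\xi,\xi')\,ds$, i.e.\ $\xi\in\mathcal{C}_{\min}(x,t)$. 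Injectivity is then immediate: once $\xi\in\mathcal{C}_{\min}(x,t)$, its running value $u_\xi$ must equal the unique solution of the Carath\'eodory equation \eqref{eq:caraeq} with $u_\xi(0)=u_0(\xi(0))$ — consistent with the terminal data because equality holds in \eqref{eq:DPP} with $\tau=0$ — and then $\eta$ is recovered from $\xi'(\cdot)=D_pH(\xi(\cdot),\cdot,u_\xi(\cdot),\eta(\cdot))$ by strict convexity {\Hsc}; so $p=\eta(t)$ and $\tau=-H(x,t,u(x,t),p)$ are determined by $\xi$, and distinct $(p,\tau)$ give distinct $\xi$.

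For surjectivity, take $\xi\in\mathcal{C}_{\min}(x,t)$ with dual arc $\eta$ and running value $u_\xi$. For $s\in(0,t)$ and any $\zeta\in\mathcal{C}_{\min}(\xi(s),s)$, the curve following $\zeta$ on $[0,s]$ and $\xi$ on $[s,t]$ is, by the dynamic programming equality \eqref{eq:DPP} and the bookkeeping of the running value through \eqref{eq:caraeq}, again an element of $\mathcal{C}_{\min}(x,t)$, and it coincides with $\xi$, $u_\xi$ and $\xi'$ on $[s,t]$; since minimizers solve \eqref{eq:LHsys}, ODE uniqueness forces it to equal $\xi$ on all of $[0,t]$, so $\zeta=\xi|_{[0,s]}$. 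Thus $\xi|_{[0,s]}$ is the unique minimizer at $(\xi(s),s)$, whence by the dictionary $u$ is differentiable at $(\xi(s),s)$ with $D_xu(\xi(s),s)=\eta(s)$ and $\partial_t u(\xi(s),s)=-H(\xi(s),s,u_\xi(s),\eta(s))$. Letting $s\uparrow t$ and using continuity of $\eta$, $u_\xi$ and $H$ gives $(\eta(t),-H(x,t,u(x,t),\eta(t)))\in D^*u(x,t)$, and this pair maps precisely to the Lie solution with data $(x,\eta(t),u(x,t))$, namely $(\xi,\eta,u_\xi)$; hence the given $\xi$ is attained. I expect the two delicate points to be (a) establishing the first-variation identity $\eta(s)\in D_x^+u(y,s)$ and keeping the differentiability-vs-uniqueness argument non-circular, and (b) the fact that, in the contact setting, the action of a concatenated curve depends on its running value, which solves the forward equation \eqref{eq:caraeq}; verifying that a concatenation of minimizers is again a minimizer therefore requires the comparison/Gronwall estimates for the Carath\'eodory flow in the $u$-variable, after which $C^2$-regularity of minimizers and uniqueness for \eqref{eq:LHsys} close the argument.
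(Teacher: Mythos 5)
The paper does not prove this proposition: it is cited (as \cite[Proposition 2.6]{CH.22}) and used as a black box, so there is no in-paper proof to compare against. Your reconstruction follows the standard route --- the contact analogue of \cite[Theorem 6.4.9]{CS.04}, as carried out in \cite{CCJWY.20, CH.22} --- and its logical skeleton is sound: the first-variation identity $\eta(s)\in D_x^+u(y,s)$ together with local semiconcavity gives the differentiability/unique-minimizer dictionary; the limiting argument from nearby points of $\mathcal{R}(u)$ yields both $\xi\in\mathcal{C}_{\min}(x,t)$ and the reverse implication of the dictionary in a non-circular way (you invoke the forward implication only at the nearby differentiability points, not at $(y,s)$ itself); strict convexity {\Hsc} together with the forward Carath\'eodory flow gives injectivity; and concatenation of minimizers followed by $s\uparrow t$ gives surjectivity. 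The two places you flag as delicate genuinely are, and both are resolved by machinery consistent with the rest of the paper: uniform a priori bounds for $(\xi_k,\eta_k,u_{\xi_k})$ on the varying intervals $[0,t_k]$ follow from {\Hx}, {\Hp}, {\Hu} and Gronwall exactly as in \cref{prop:estRu1} and \cref{prop:estatRu2}; and for the concatenation step the key observation (which you should make explicit) is that $u_\zeta(s)=u(\xi(s),s)=u_\xi(s)$, which forces the forward Carath\'eodory solutions, hence the derivatives $\sigma'$ and the dual arcs, to agree on $[s,t]$, so that the concatenation and $\xi$ share terminal data for \eqref{eq:LHsys} and ODE uniqueness can close the argument.
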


\begin{prop}[{\cite[Proposition 2.3(4) and 2.4]{CCJWY.20}}]\label{prop:estLHsyssol}
Let $(x,t) \in \mathbb{R}^n \times (0,T)$.
Let $u$ be a viscosity solution of \eqref{eq:HJu}--\eqref{eq:ID} and $\xi \in \mathcal{C}_{\min}(x,t)$.
Then $(\xi(s), s) \in \mathcal{R}(u)$ for all $s \in (0,t)$.
Moreover, we have
\begin{align*}
&\eta(t) \in D_x^+ u(x,t), \\
&\eta(s)=D_x u(\xi(s),s) \quad (s \in (0,t)), \\
&\eta(0) \in D^- u_0(\xi(0)).
\end{align*}
\end{prop}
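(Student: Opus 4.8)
\emph{Plan.} I would combine three ingredients: the equality case of the dynamic programming principle \eqref{eq:DPP} (``calibration''), the gradient--characteristic correspondence of \cref{thm:ACS20a}, and a direct perturbation of the minimizing curve near the initial time. Throughout, $\eta$ denotes the dual arc of $\xi$ and $u_\xi$ the solution of \eqref{eq:caraeq} with $u_\xi(0)=u_0(\xi(0))$; since $\xi\in\mathcal C_{\min}(x,t)$ is in particular a fixed-endpoint minimizer, $(\xi,u_\xi,\eta)$ solves the Lie system \eqref{eq:LHsys}, $u_\xi(t)=u(x,t)$, and $u$ is locally semiconcave on $\mathbb R^n\times(0,T)$. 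First I would record the calibration facts. Applying the equality in \eqref{eq:DPP} to $\xi$ at a time $\tau=s\in(0,t)$ gives $u_\xi(s)=u(\xi(s),s)$, the decomposition $u(x,t)=\int_s^t L(\xi(r),r,u_\xi(r),\xi'(r))\,dr+u(\xi(s),s)$, and $\xi|_{[0,s]}\in\mathcal C_{\min}(\xi(s),s)$. Conversely, concatenating any $\zeta\in\mathcal C_{\min}(\xi(s),s)$ with $\xi|_{[s,t]}$ produces, by those identities and uniqueness for \eqref{eq:caraeq}, an element of $\mathcal C_{\min}(x,t)$; this element is $C^2$, so matching first derivatives at $s$ forces $\zeta'(s)=\xi'(s)$, hence $\eta_\zeta(s)=D_qL(\xi(s),s,u(\xi(s),s),\xi'(s))=\eta(s)$, and uniqueness for \eqref{eq:LHsys} then gives $\zeta=\xi|_{[0,s]}$. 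Thus $\mathcal C_{\min}(\xi(s),s)=\{\xi|_{[0,s]}\}$ for every $s\in(0,t)$.

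\emph{Interior differentiability and the identities $\eta(s)=D_xu(\xi(s),s)$, $\eta(t)\in D_x^+u(x,t)$.} Fix $s\in(0,t)$. Since $\mathcal C_{\min}(\xi(s),s)$ is a singleton, \cref{thm:ACS20a} shows $D^*u(\xi(s),s)$ is a singleton $\{(p_s,\tau_s)\}$, and the solution of \eqref{eq:LHsys} that it indexes has spatial component $\xi|_{[0,s]}$. Comparing the first equation of \eqref{eq:LHsys} at time $s$ for that solution with the one satisfied by $(\xi,u_\xi,\eta)$, and using that {\Hsc} makes $p\mapsto D_pH(\xi(s),s,u(\xi(s),s),p)$ injective, I get $p_s=\eta(s)$. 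Since $u$ is semiconcave, $D^+u(\xi(s),s)=\operatorname{co} D^*u(\xi(s),s)$ is then the singleton $\{(\eta(s),\tau_s)\}$ (a standard property of semiconcave functions; see \cref{prop:DpmSC} and the references therein), so $u$ is differentiable at $(\xi(s),s)$---i.e.\ $(\xi(s),s)\in\mathcal R(u)$ and $D_xu(\xi(s),s)=\eta(s)$. Applying the same reasoning on the full interval $[0,t]$: the element of $D^*u(x,t)$ that \cref{thm:ACS20a} associates with $\xi$ has first component $\eta(t)$ (again by {\Hsc}), and since $D^*u(x,t)\subset D^+u(x,t)$ for the semiconcave $u$, this yields $\eta(t)\in D_x^+u(x,t)$.

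\emph{The initial condition $\eta(0)\in D^-u_0(\xi(0))$.} As $u_0$ need not be differentiable at $\xi(0)$, here I would argue by perturbation. For $y$ near $\xi(0)$ and $\epsilon\in(0,t)$ small, let $\gamma_y(r):=y+\tfrac r\epsilon(\xi(\epsilon)-y)$ on $[0,\epsilon]$, so that $\gamma_y(0)=y$ and $\gamma_y(\epsilon)=\xi(\epsilon)$. Comparing the value at $(\xi(\epsilon),\epsilon)$ along $\gamma_y$, via \eqref{eq:DPP} with $\tau=0$, to the calibrated value $u(\xi(\epsilon),\epsilon)=u_0(\xi(0))+\int_0^\epsilon L(\xi,r,u_\xi,\xi')\,dr$ gives
\[
u_0(y)\ \ge\ u_0(\xi(0))-\int_0^\epsilon\bigl[L(\gamma_y,r,w_y,\gamma_y')-L(\xi,r,u_\xi,\xi')\bigr]\,dr,
\]
where $w_y$ solves \eqref{eq:caraeq} along $\gamma_y$ with $w_y(0)=u_0(y)$. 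I would then Taylor-expand the integrand about $(\xi(r),r,u_\xi(r),\xi'(r))$, use $D_qL(\xi,\cdot,u_\xi,\xi')=\eta$, bound the induced perturbation $w_y-u_\xi=O(|y-\xi(0)|+\epsilon)$ by {\Hu} and Gronwall, and invoke the $C^2$ regularity of $\xi$, to obtain
\[
u_0(y)\ \ge\ u_0(\xi(0))+\Bigl\langle\tfrac1\epsilon\!\int_0^\epsilon\!\eta(r)\,dr,\ y-\xi(0)\Bigr\rangle+O\bigl(\epsilon|y-\xi(0)|+|y-\xi(0)|^2/\epsilon+\epsilon^2\bigr).
\]
Choosing $\epsilon=|y-\xi(0)|^{3/4}$ makes the remainder $o(|y-\xi(0)|)$, while $\tfrac1\epsilon\int_0^\epsilon\eta\to\eta(0)$ as $y\to\xi(0)$; hence $u_0(y)\ge u_0(\xi(0))+\langle\eta(0),y-\xi(0)\rangle+o(|y-\xi(0)|)$, which is exactly $\eta(0)\in D^-u_0(\xi(0))$.

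\emph{Expected main obstacle.} The one genuinely new feature relative to the $u$-independent theory is that every perturbed competitor curve drags along a perturbed solution of the $u$-dependent Carath\'eodory equation \eqref{eq:caraeq}, and one must verify that this perturbation affects only the lower-order terms; this is precisely where {\Hu} and the Gronwall estimate of the third step are used, and where I expect the bookkeeping to be heaviest. (If one wished to avoid \cref{thm:ACS20a} in the second step, interior differentiability could instead be obtained by sandwiching $u$, which is semiconcave, between itself and a semiconvex ``backward value function'' that touches $u$ along $\xi$; but the argument above is shorter given the tools already available.)
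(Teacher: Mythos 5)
This proposition is stated in the paper without proof: it is cited directly from \cite[Proposition 2.3(4) and 2.4]{CCJWY.20}, and the remark that follows it merely asserts---without argument---that the conclusion remains valid when $u_0$ is only Lipschitz rather than smooth. There is therefore no ``paper proof'' to compare against; your proposal is a self-contained reconstruction, and, as far as I can check, it is correct. The calibration step and the deduction that $\mathcal C_{\min}(\xi(s),s)$ is a singleton (via concatenation, the $C^2$ regularity of minimizers, uniqueness for \eqref{eq:caraeq}, and uniqueness of the Lie flow) are sound; the identification of the unique element of $D^*u(\xi(s),s)$ with $(\eta(s),\tau_s)$ via \cref{thm:ACS20a} and the injectivity of $D_pH(\cdot)$ from {\Hsc} is correct (one can also bypass {\Hsc} here by directly using $\eta_\zeta(s)=D_qL(\xi(s),s,u_\xi(s),\xi'(s))=\eta(s)$ once $\zeta=\xi|_{[0,s]}$ is established); and passing from the singleton $D^*u$ to differentiability via semiconcavity and $D^+u=\operatorname{co}D^*u$ is the standard Cannarsa--Sinestrari argument referenced through \cref{prop:DpmSC}. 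The genuinely valuable part of your proposal is the final perturbation argument for $\eta(0)\in D^-u_0(\xi(0))$: the straight-line competitor, the Gronwall control of the $u$-dependent Carath\'eodory perturbation via {\Hu}, and the balancing choice $\epsilon=|y-\xi(0)|^{3/4}$ supply exactly the justification that the paper's remark after the proposition promises but omits, since this step is the only one that touches $u_0$ and hence the only one requiring adaptation from the smooth case. Two minor quibbles: your error exponents are slightly conservative (the Carath\'eodory perturbation is in fact $O(|y-\xi(0)|+\epsilon^2)$, and the $\xi$-Taylor remainder contributes $O(\epsilon^3)$ rather than $O(\epsilon^2)$), though this is immaterial since all the remainders are still $o(|y-\xi(0)|)$ under your choice of $\epsilon$; and when quoting $D^+u=\operatorname{co}D^*u$ you should cite \cite[Theorem 3.3.6]{CS.04} directly, since \cref{prop:DpmSC} as stated in the paper records only part of that theorem.
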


\begin{rem}
In \cite{CH.22}, the initial data $u_0$ is assumed to be smooth, but the above properties also hold even if $u_0$ is Lipschitz continuous.   
\end{rem}

\section{Approximation approach}\label{sec:ley}

In this section, we derive lower gradient estimates for viscosity solutions to \eqref{eq:HJu}--\eqref{eq:ID} using the approach based on \cite{L.01}.
More precisely, we utilize the notion of Barron--Jensen solutions, properties of the inf-convolution, and the local comparison principle.
The local comparison principle is proved in \cref{sec:LCP}.

\subsection{Properties of the inf-convolution}\label{sec:inf}

As mentioned in \cref{sec:previs}, the properties of the inf-convolution are critical for deriving lower gradient estimates.
We can extend two properties from \cite[Lemma 3.1 and 3.2]{L.01} for \eqref{eq:HJu}--\eqref{eq:ID}.
These results are used to prove \cref{thm:equivvb}.

In what follows, for $(x_0,t_0) \in \mathbb{R}^n \times (0,T)$ and $0<\rho<r$, we set
\begin{align*}
\mathcal{A}&:=B_r(x_0) \times (t_0-r,t_0+r), \\
\mathcal{A}_{\rho}&:=B_{r-\rho}(x_0) \times (t_0-r+\rho,t_0+r-\rho), \\
\mathcal{M}_{\mathcal{A}}&:=\left( 2 \max_{(y,s) \in \overline{\mathcal{A}}} |u(y,s)| \right)^{\frac{1}{2}}.
\end{align*}

\begin{lem}\label{lem:aesol}
Assume {\Hu} and {\Hc}.
Let $u \in \mathrm{Lip}_{\rm{loc}}(\mathbb{R}^n \times (0,T))$ be a viscosity subsolution (resp. supersolution) of \eqref{eq:HJu}--\eqref{eq:ID}.
Then $u$ is a viscosity supersolution (resp. subsolution) of 
\begin{equation}
-u_t(x,t)-H(x,t,u(x,t),D_x u(x,t))=0
\quad \mathrm{in} \ \mathbb{R}^n \times (0,T).
\label{eq:invereq}
\end{equation}
\end{lem}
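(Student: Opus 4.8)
The plan is to prove the subsolution statement; the supersolution case is parallel. Suppose $u \in \mathrm{Lip}_{\mathrm{loc}}(\mathbb{R}^n\times(0,T))$ is a viscosity subsolution of \eqref{eq:HJu}. I must show that whenever $\psi\in C^1(\mathbb{R}^n\times(0,T))$ and $u-\psi$ has a local minimum at $(x_0,t_0)\in\mathbb{R}^n\times(0,T)$, then
\[
\psi_t(x_0,t_0)+H(x_0,t_0,u(x_0,t_0),D_x\psi(x_0,t_0))\leqq 0,
\]
which is exactly the viscosity supersolution requirement for \eqref{eq:invereq}. Since subtracting $|(y,s)-(x_0,t_0)|^2$ from $\psi$ changes neither its value nor its gradient at $(x_0,t_0)$, I may assume the minimum is strict.

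The first step is a mollification argument that exploits convexity. Fix an open ball $\mathcal{B}$ with $\overline{\mathcal{B}}\subset\mathbb{R}^n\times(0,T)$ and $(x_0,t_0)\in\mathcal{B}$, let $\{\rho_\delta\}$ be standard mollifiers on $\mathbb{R}^{n+1}$, and set $u^\delta:=u*\rho_\delta$; then $u^\delta\in C^\infty(\mathcal{B})$ for $\delta$ small and $u^\delta\to u$ uniformly on $\overline{\mathcal{B}}$. By Rademacher's theorem $u$ is differentiable a.e., and at a differentiability point $(y,s)$ one has $Du(y,s)\in D^+u(y,s)$, so the viscosity subsolution property forces $u_t(y,s)+H(y,s,u(y,s),D_xu(y,s))\leqq 0$ there. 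Differentiating under the integral sign, invoking Jensen's inequality in the gradient slot (permitted by \Hc, since $D_xu^\delta(y,s)$ is an average of the vectors $D_xu(y-z,s-r)$), and bounding the difference $H(y,s,u^\delta(y,s),\cdot)-H(y-z,s-r,u(y-z,s-r),\cdot)$ by the uniform continuity of $H$ on a compact neighbourhood together with \Hu\ and the local Lipschitz bound $|u^\delta(y,s)-u(y-z,s-r)|\leqq C\delta$, I would obtain a quantity $\varepsilon_\delta\to 0$ with
\[
u^\delta_t(y,s)+H(y,s,u^\delta(y,s),D_xu^\delta(y,s))\leqq\varepsilon_\delta
\qquad\text{for a.e. }(y,s)\in\mathcal{B},
\]
and hence, by continuity of $u^\delta$ and $H$, for every $(y,s)\in\mathcal{B}$. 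Thus $u^\delta$ is a classical subsolution of the perturbed equation on $\mathcal{B}$.

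The second step is the standard ``moving extremum'' passage to the limit. Since $u-\psi$ has a strict local minimum at $(x_0,t_0)$ and $u^\delta\to u$ uniformly near $(x_0,t_0)$, the function $u^\delta-\psi$ has a local minimum at some $(x_\delta,t_\delta)\to(x_0,t_0)$, where its gradient vanishes, i.e.\ $Du^\delta(x_\delta,t_\delta)=D\psi(x_\delta,t_\delta)$. Evaluating the displayed inequality at $(x_\delta,t_\delta)$ and letting $\delta\to 0$ (using $u^\delta(x_\delta,t_\delta)\to u(x_0,t_0)$, $D\psi(x_\delta,t_\delta)\to D\psi(x_0,t_0)$, $\varepsilon_\delta\to 0$, and the continuity of $H$) yields the required inequality. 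The supersolution statement is obtained by the same scheme with maximum and minimum interchanged.

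I expect the only delicate point to be the estimate in the first step: because $H$ depends on $(x,t)$ and on $u$ as well as on $p$, the averaged gradient $D_xu^\delta(y,s)=\int D_xu(y-z,s-r)\,\rho_\delta(z,r)\,dz\,dr$ gets fed into $H$ at the ``wrong'' base point and at $u^\delta(y,s)$ rather than at the pointwise value of $u$, and it is precisely \Hu, the local uniform continuity of $H$, and the local Lipschitz bound for $u$ that force the error $\varepsilon_\delta$ to vanish; everything else is routine. (An alternative to the mollification route is to observe that the subsolution inequality passes to every element of $D^*u(x_0,t_0)$ by taking limits along differentiability points, and then to extend it to $\overline{\mathrm{co}}\,D^*u(x_0,t_0)\supseteq D^-u(x_0,t_0)$ using the convexity of $p\mapsto H(x_0,t_0,u(x_0,t_0),p)$; this requires the inclusion $D^-u\subseteq\overline{\mathrm{co}}\,D^*u$ for locally Lipschitz functions, which the mollification argument avoids.)
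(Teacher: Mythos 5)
Your proposal takes essentially the same approach as the paper's proof: Rademacher's theorem gives the subsolution inequality at a.e.\ point, mollification combined with Jensen's inequality (using \Hc) produces a smooth approximate classical subsolution, the error is controlled via \Hu, local Lipschitz continuity of $u$, and the local uniform continuity of $H$, and then one passes to the limit. The only cosmetic difference is the final stability step, where you invoke the elementary moving-extremum argument while the paper cites the discontinuous (half-relaxed-limit) stability theorem; these are interchangeable here.
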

\begin{proof}
Fix $(x_0,t_0) \in \mathbb{R}^n \times (0,T)$ and $0<\rho<r$.
Then $u \in \mathrm{Lip}(\mathcal{A})$ and we denote $L$ as a Lipschitz constant of $u$ on $\mathcal{A}$.
By Rademacher's theorem and \cite[Proposition II.1.9]{BC.97}, we have 
\begin{equation}
u_t(y,s)+H(y,s,u(y,s),D_x u(y,s)) \leqq 0
\quad \mbox{a.e. in} \ \mathcal{A}. 
\label{eq:aeeq}
\end{equation}
Let $\delta \in (0,\frac{\rho}{2})$ and $\eta_{\delta} \in C^{\infty}(\mathbb{R}^n \times \mathbb{R})$ be a standard mollifier.
We define
\[
u_{\delta}(x,t)
:=(u \ast \eta_{\delta})(x,t)
=\int_{\mathbb{R}^n \times \mathbb{R}} u(y,s)\eta_{\delta}(x-y,t-s)\,dyds
\quad ((x,t) \in \mathcal{A}_{\rho}).
\]
Multiplying $\eta_{\delta}(x-y,t-s)$ 
and integrating both sides of \eqref{eq:aeeq} on $\mathbb{R}^n \times \mathbb{R}$,
we obtain
\[
(u_{\delta})_t(x,t)+ \int_{\mathbb{R}^n \times \mathbb{R}} H(y,s,u(y,s),D_x u(y,s))\eta_{\delta}(x-y,t-s)\,dyds \leqq 0
\quad \mathrm{in} \ \mathcal{A}_{\rho}.
\]

Now we define the modulus of continuity $\mu_{\mathcal{A}}$ of $u$ in the compact set $\overline{\mathcal{A}} \times \overline{B_{\frac{\mathcal{M}_{\mathcal{A}}^2}{2}}(0)} \times \overline{B_L(0)}$ and obtain 
\[
(u_{\delta})_t(x,t)+\int_{\mathbb{R}^n \times \mathbb{R}} H(x,t,u(y,s),D_x u(y,s))\eta_{\delta}(x-y,t-s)\,dyds 
\leqq \mu_{\mathcal{A}}(2\delta).
\]
By {\Hu}, we have
\begin{equation}
\begin{split}
&(u_{\delta})_t(x,t)+ \int_{\mathbb{R}^n \times \mathbb{R}} H(x,t,u_{\delta}(x,t),D_x u(y,s))\eta_{\delta}(x-y,t-s)\,dyds \\
&\hspace{1cm} \leqq \int_{\mathbb{R}^n \times \mathbb{R}} K_3|u_{\delta}(x,t)-u(y,s)|\eta_{\delta}(x-y,t-s)\,dyds+\mu_{\mathcal{A}}(2\delta).
\end{split}
\label{eq:inteq}
\end{equation}
By the continuity of $u$,
for all $\varepsilon>0$, there exists $\delta_0>0$ such that
\[
|u(x-z,t-\tau)-u(y,s)|<\varepsilon
\]
for all $(y,s) \in B_{\delta_0}(x) \times (t-\delta_0,t+\delta_0)$ and 
$(z,\tau) \in B_{\delta_0}(0) \times (-\delta_0,\delta_0)$.
If $0<\delta<\delta_0$, then
\begin{align*}
&|u_{\delta}(x,t)-u(y,s)| \\
&=\left| \int_{B_{\delta}(x) \times (t-\delta,t+\delta)} u(z,\tau)\eta_{\delta}(x-z,t-\tau)\,dz d\tau
 -u(y,s)\int_{B_{\delta}(0) \times (-\delta,\delta)} \eta_{\delta}(z,\tau)\,dz d\tau \right| \\
&\leqq \int_{B_{\delta}(0) \times (-\delta,\delta)} |u(x-z,t-\tau)-u(y,s)|\eta_{\delta}(z,\tau)\,dz d\tau \\
&\leqq \varepsilon \int_{B_{\delta}(0) \times (-\delta,\delta)} \eta_{\delta}(z,\tau)\,dz d\tau
=\varepsilon.
\end{align*}
From \eqref{eq:inteq}, if $0<\delta<\delta_0$, then we have 
\[
(u_{\delta})_t(x,t)+ \int_{\mathbb{R}^n \times \mathbb{R}} H(x,t,u_{\delta}(x,t),D_x u(y,s))\eta_{\delta}(x-y,t-s)\,dyds
\leqq K_3\varepsilon+\mu_{\mathcal{A}}(2\delta).  
\]
By Jensen's inequality, we have
\begin{align*}
&\int_{\mathbb{R}^n \times \mathbb{R}} H(x,t,u_{\delta}(x,t),D_x u(y,s))\eta_{\delta}(x-y,t-s)\,dy ds \\
&\geqq H \left( x,t,u_{\delta}(x,t), \int_{\mathbb{R}^n \times \mathbb{R}} D_x u(y,s)\eta_{\delta}(x-y,t-s)\,dy ds \right) \\
&=H(x,t,u_{\delta}(x,t),D_x u_{\delta}(x,t))
\end{align*}
and therefore
\begin{equation}
(u_{\delta})_t(x,t)+H(x,t,u_{\delta}(x,t),D_x u_{\delta}(x,t)) 
\leqq K_3\varepsilon+\mu_{\mathcal{A}}(2\delta).
\label{eq:molieq}
\end{equation}
Since $u_{\delta} \in C^{\infty}(\mathcal{A}_{\rho})$, \cref{eq:molieq} holds in the classical sense.
A discontinuous stability result (see \cite[Chapter 2]{ABIL.11}) implies that $\displaystyle u(x,t)=\liminf_{\delta \to 0} {}_{\ast} u_{\delta}(x,t)$ is a viscosity supersolution of 
\[
-u_t(x,t)-H(x,t,u(x,t),D_x u(x,t))=-K_3\varepsilon
\quad \mathrm{in} \ \mathcal{A}_{\rho}.
\]
Letting $\varepsilon \to 0$, $u$ is a viscosity supersolution of \eqref{eq:invereq} since $\bigcup \mathcal{A}_{\rho}=\mathbb{R}^n \times (0,T)$.
\end{proof}

\begin{lem}\label{lem:infconvsol}
Assume {\Hx},  {\Hu} and {\Hc}.
Let $u \in C(\mathbb{R}^n \times (0,T))$ be a Barron--Jensen solution of \eqref{eq:HJu}--\eqref{eq:ID}.
Let $\gamma \geqq (\frac{\beta}{2}+2)C_1+K_3$, $\alpha \in (0,\frac{\rho}{2\mathcal{M}_{\mathcal{A}}})$ and $\varepsilon \in (0,\frac{e^{\frac{\gamma}{2}T}\rho}{2\mathcal{M}_{\mathcal{A}}})$.
Then the inf-convolution
\begin{equation}
u_{\varepsilon,\alpha}(x,t)
:=\inf_{(y,s) \in \overline{\mathcal{A}}}
 \left\{  u(y,s)+e^{-\gamma t}\frac{|x-y|^2}{\varepsilon^2}+\frac{|t-s|^2}{\alpha^2} \right\}  
 \label{eq:infconvt}
\end{equation}
is a viscosity subsolution of
\begin{equation}
\begin{split}
&u_t(x,t)+H(x,t,u(x,t),D_xu(x,t)) \\
&=\dfrac{C_1\beta}{2}e^{\gamma t}\varepsilon^2 +K_3\omega_{\mathcal{A}}(\mathcal{M}_{\mathcal{A}}(\varepsilon+\alpha)) +\mu_{\varepsilon,\mathcal{A}}(\mathcal{M}_{\mathcal{A}}\alpha)
\quad \mathrm{in} \ \mathcal{A}_{\rho},  
\end{split}
\label{eq:infconvapeq}
\end{equation}
where $\mu_{\varepsilon,\mathcal{A}}$ and $\omega_{\mathcal{A}}$ is, respectively, a modulus of continuity for $H$ and $u$ in the compact set $\overline{\mathcal{A}} \times \overline{B_{\frac{\mathcal{M}_{\mathcal{A}}^2}{2}}(0)} \times \overline{B_{\frac{2\mathcal{M}_{\mathcal{A}}}{\varepsilon}}(0)}$ and $\overline{\mathcal{A}}$.
\end{lem}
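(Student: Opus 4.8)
The plan is to follow the standard inf-convolution argument, tracking how each perturbation term contributes to the error. First I would fix $(x,t) \in \mathcal{A}_\rho$ and let $\phi \in C^1(\mathbb{R}^n \times (0,T))$ be such that $u_{\varepsilon,\alpha} - \phi$ attains a local maximum at $(x,t)$; I may assume the maximum is strict and equal to $0$. The infimum in \eqref{eq:infconvt} is attained at some $(y^*, s^*) \in \overline{\mathcal{A}}$ (by lower semicontinuity and compactness), and the first step is to localize: using $u_{\varepsilon,\alpha}(x,t) \le u(x,t)$ together with the bound $|u| \le \mathcal{M}_{\mathcal{A}}^2/2$ on $\overline{\mathcal{A}}$, one gets $e^{-\gamma t}|x-y^*|^2/\varepsilon^2 + |t-s^*|^2/\alpha^2 \le \mathcal{M}_{\mathcal{A}}^2$, whence $|x - y^*| \le \mathcal{M}_{\mathcal{A}} e^{\gamma t/2}\varepsilon \le \mathcal{M}_{\mathcal{A}} e^{\gamma T/2}\varepsilon$ and $|t - s^*| \le \mathcal{M}_{\mathcal{A}}\alpha$. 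With the stated bounds on $\alpha$ and $\varepsilon$ this keeps $(y^*, s^*)$ in the interior of $\mathcal{A}$, so it is a genuine free minimum. One also reads off that the relevant test gradient $p := D_x\phi(x,t) = 2e^{-\gamma t}(x - y^*)/\varepsilon^2$ satisfies $|p| \le 2\mathcal{M}_{\mathcal{A}}/\varepsilon$ (after using $e^{-\gamma t/2} \le 1$), which is why the modulus $\mu_{\varepsilon,\mathcal{A}}$ is taken over $\overline{B_{2\mathcal{M}_{\mathcal{A}}/\varepsilon}(0)}$ in the $p$-slot; and $\tau := \phi_t(x,t) = \gamma e^{-\gamma t}|x-y^*|^2/\varepsilon^2 + 2(t-s^*)/\alpha^2$ with the sign conventions to be handled carefully.

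Next, the function $(y,s) \mapsto u(y,s) + e^{-\gamma t}|x-y|^2/\varepsilon^2 + |t-s|^2/\alpha^2 - \phi(x,t)$ — more precisely, after a standard doubling, the function $(y,s,z,\sigma) \mapsto u(y,s) + e^{-\gamma \sigma}|z - y|^2/\varepsilon^2 + |\sigma - s|^2/\alpha^2 - \phi(z,\sigma)$ near $(y^*,s^*,x,t)$ — has a minimum in $(y,s)$ at $(y^*,s^*)$. This produces a $C^1$ lower test function for $u$ at $(y^*, s^*)$ with spatial gradient exactly $p = 2e^{-\gamma t}(x-y^*)/\varepsilon^2$ and time derivative $\tau' = -2(t - s^*)/\alpha^2$. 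Since $u$ is a Barron--Jensen solution, \cref{def:BJsol} gives the \emph{equality}
\[
\tau' + H\bigl(y^*, s^*, u(y^*,s^*), p\bigr) = 0.
\]
This is the place where convexity enters through the Barron--Jensen formulation: it is crucial that we get an equality (equivalently, a supersolution inequality at a point reached from below), which is exactly what lets the inf-convolution become a \emph{subsolution} rather than merely a supersolution. I would then compare $\tau + H(x,t,u_{\varepsilon,\alpha}(x,t), p)$ against $\tau' + H(y^*,s^*,u(y^*,s^*),p) = 0$ term by term.

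For the Hamiltonian comparison: {\Hx} gives $|H(x,t,u(y^*,s^*),p) - H(y^*,s^*,u(y^*,s^*),p)| \le$ (the $x$-part) $C_1(\beta + |p|)|x - y^*|$ plus (the $t$-part, via the modulus $\mu_{\varepsilon,\mathcal{A}}$) $\mu_{\varepsilon,\mathcal{A}}(|t-s^*|) \le \mu_{\varepsilon,\mathcal{A}}(\mathcal{M}_{\mathcal{A}}\alpha)$, and {\Hu} gives $|H(x,t,u_{\varepsilon,\alpha}(x,t),p) - H(x,t,u(y^*,s^*),p)| \le K_3|u_{\varepsilon,\alpha}(x,t) - u(y^*,s^*)|$. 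The term $C_1(\beta + |p|)|x-y^*|$ is the subtle one: substituting $|x - y^*| = \varepsilon^2 e^{\gamma t}|p|/2$ turns $C_1|p|\cdot|x-y^*|$ into $\tfrac{C_1}{2}e^{\gamma t}\varepsilon^2|p|^2$, and $C_1\beta|x-y^*| \le \tfrac{C_1\beta}{2}e^{\gamma t}\varepsilon^2(1 + |p|^2/2)$ or a similar Young-type split — the idea being to absorb all the $|p|^2$-type contributions into the time-derivative gap $\tau - \tau' = \gamma e^{-\gamma t}|x-y^*|^2/\varepsilon^2 + 2(t-s^*)/\alpha^2 - (-2(t-s^*)/\alpha^2)$. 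Here $\gamma e^{-\gamma t}|x-y^*|^2/\varepsilon^2 = \gamma e^{\gamma t}\varepsilon^2|p|^2/4$, and one checks that $\gamma \ge (\tfrac{\beta}{2}+2)C_1 + K_3$ is exactly the threshold making $\tfrac{\gamma}{4} \ge \tfrac{C_1}{2} + \tfrac{C_1\beta}{8} + \cdots$ (the precise bookkeeping with the factor $e^{\gamma t}$ and the $K_3|u_{\varepsilon,\alpha} - u|$ term, which itself is $O(e^{\gamma t}\varepsilon^2)$ plus a modulus $K_3\omega_{\mathcal{A}}(\mathcal{M}_{\mathcal{A}}(\varepsilon + \alpha))$ via the same localization estimate on $|u_{\varepsilon,\alpha}(x,t) - u(y^*,s^*)|$, should be carried out). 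The $2(t-s^*)/\alpha^2$ terms combine to a nonnegative contribution that is simply discarded. What survives is $\tfrac{C_1\beta}{2}e^{\gamma t}\varepsilon^2 + K_3\omega_{\mathcal{A}}(\mathcal{M}_{\mathcal{A}}(\varepsilon+\alpha)) + \mu_{\varepsilon,\mathcal{A}}(\mathcal{M}_{\mathcal{A}}\alpha)$, which is \eqref{eq:infconvapeq}. I expect the main obstacle to be precisely this constant-chasing: getting the coefficient of $\varepsilon^2$ to be exactly $\tfrac{C_1\beta}{2}e^{\gamma t}$ with no leftover $|p|^2$ term requires the choice of $\gamma$ to be tight, and one must be careful that the $K_3|u_{\varepsilon,\alpha}(x,t) - u(y^*,s^*)|$ term splits cleanly into a part controlled by $\tfrac{K_3}{4}e^{\gamma t}\varepsilon^2|p|^2$-type quantities (absorbed by $\gamma$) and the harmless modulus part.
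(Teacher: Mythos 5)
There is a genuine gap, and it is in the test direction. You let $u_{\varepsilon,\alpha}-\phi$ attain a local \emph{maximum} at $(x,t)$ and then claim to ``read off'' $D_x\phi(x,t)=2e^{-\gamma t}(x-y^*)/\varepsilon^2$ and the corresponding formula for $\phi_t(x,t)$. That identification is false for a max: when $\phi$ touches $u_{\varepsilon,\alpha}$ from above, you only learn that $(D_x\phi(x,t),\phi_t(x,t))\in D^+ u_{\varepsilon,\alpha}(x,t)$, and since the inf-convolution is semiconcave but not differentiable in general, $D^+u_{\varepsilon,\alpha}(x,t)$ can be a nontrivial convex set. The quadratic $u(y^*,s^*)+e^{-\gamma\sigma}|z-y^*|^2/\varepsilon^2+|\sigma-s^*|^2/\alpha^2$ also touches $u_{\varepsilon,\alpha}$ from above at $(x,t)$, so its gradient is merely \emph{one} element of $D^+u_{\varepsilon,\alpha}(x,t)$; it need not coincide with $D\phi(x,t)$. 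The identities \eqref{eq:infpderi1}--\eqref{eq:infpderi2} hold precisely when $\phi$ touches $u_{\varepsilon,\alpha}$ from \emph{below} (i.e.\ $u_{\varepsilon,\alpha}-\phi$ attains a local minimum), because then $\phi$ is squeezed against the quadratic and the two must be tangent. This is what the paper does in step $(1^\circ)$: it tests with $\phi$ from below, obtains the Barron--Jensen equality at $(y_{\varepsilon,\alpha},s_{\varepsilon,\alpha})$ from the $(y,s)$-minimum, and obtains the first-order conditions on $\phi$ from the $(x,t)$-minimum of the doubled functional.

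But testing from below produces only ``for $\phi$ touching from below, $\phi_t+H\leqq \text{error}$'' --- which is not yet the subsolution property, since a viscosity subsolution is defined via test functions from above. Your proposal entirely omits the second, nontrivial step: because $u_{\varepsilon,\alpha}$ is Lipschitz, the inequality \eqref{eq:infconv1} holds almost everywhere, and one then mollifies, invokes Jensen's inequality (here is where {\Hc}, the convexity of $p\mapsto H$, is used a second time, beyond the Barron--Jensen equivalence), and passes to the limit by discontinuous stability --- this is exactly \cref{lem:aesol}, which the paper invokes in step $(2^\circ)$ to convert the ``from below'' inequality into the genuine subsolution property. Without this conversion, or an explicit extreme-point-plus-convexity argument over $D^*u_{\varepsilon,\alpha}\subset D^+u_{\varepsilon,\alpha}$, the proof is not complete. (Two smaller points you flagged yourself: the time derivative of the quadratic carries a factor $-\gamma e^{-\gamma t}$, not $+\gamma e^{-\gamma t}$, and the sign matters because that negative term is what absorbs the $|p|^2$-contributions; and the $2(t-s^*)/\alpha^2$ terms cancel exactly via \eqref{eq:infp1} and \eqref{eq:infpderi1}, rather than combining to a discardable nonnegative quantity.)
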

\begin{proof}
We denote $\mathcal{M}$ instead of $\mathcal{M}_{\mathcal{A}}$ for the sake of simplicity.
The proof is divided into two steps.

\noindent
$(1^{\circ})$
Let $\phi \in C^1(\mathbb{R}^n \times (0,T))$ such that
$u_{\varepsilon,\alpha}-\phi$ has a minimum at $(\hat{x},\hat{t}) \in \mathcal{A}_{\rho}$ and 
$(y_{\varepsilon,\alpha},s_{\varepsilon,\alpha}) \in \overline{\mathcal{A}}$ be a minimizer of $u_{\varepsilon,\alpha}(\hat{x},\hat{t})$.
Then we have
\begin{align*}
&u(y_{\varepsilon,\alpha},s_{\varepsilon,\alpha})+e^{-\gamma \hat{t}}\frac{|\hat{x}-y_{\varepsilon,\alpha}|^2}{\varepsilon^2}+\frac{|\hat{t}-s_{\varepsilon,\alpha}|^2}{\alpha^2}-\phi(\hat{x},\hat{t}) \\
&\hspace{1cm} 
 \leqq u(y,s)+e^{-\gamma t}\frac{|x-y|^2}{\varepsilon^2}+\frac{|t-s|^2}{\alpha^2}-\phi(x,t)
\end{align*}
for all $(x,t) \in  \mathcal{A}_{\rho}$ and
$(y,s) \in \overline{\mathcal{A}}$.
This implies that a function 
\[
\Phi_{\varepsilon,\alpha}(x,t,y,s):=
u(y,s)+e^{-\gamma t}\frac{|x-y|^2}{\varepsilon^2}+\frac{|t-s|^2}{\alpha^2}-\phi(x,t)
\]
attains a minimum at $(\hat{x},\hat{t},y_{\varepsilon,\alpha},s_{\varepsilon,\alpha})$ 
over $\mathcal{A}_{\rho} \times \overline{\mathcal{A}}$.
Since $(y_{\varepsilon,\alpha},s_{\varepsilon,\alpha})$ is a minimizer of $u_{\varepsilon,\alpha}(\hat{x},\hat{t})$,
we have
\begin{align*}
e^{-\gamma \hat{t}}\frac{|\hat{x}-y_{\varepsilon,\alpha}|^2}{\varepsilon^2}+\frac{|\hat{t}-s_{\varepsilon,\alpha}|^2}{\alpha^2}
&=u_{\varepsilon,\alpha}(\hat{x},\hat{t}) -u(y_{\varepsilon,\alpha},s_{\varepsilon,\alpha}) \\
&\leqq  2 \max_{(y,s) \in \overline{\mathcal{A}}} |u(y,s)|
=\mathcal{M}^2,
\end{align*}
and 
\[
|\hat{x}-y_{\varepsilon,\alpha}| 
\leqq e^{\frac{\gamma}{2}T} \mathcal{M}\varepsilon 
\leqq \frac{\rho}{2}, \quad
|\hat{t}-s_{\varepsilon,\alpha}| 
\leqq \mathcal{M}\alpha 
\leqq \frac{\rho}{2}.
\]
Therefore we obtain 
$(y_{\varepsilon,\alpha},s_{\varepsilon,\alpha}) \in \mathcal{A}$ 
and $\Phi$ has a local minimin at $(\hat{x},\hat{t},y_{\varepsilon,\alpha},s_{\varepsilon,\alpha})$.

Since $\Phi(\hat{x},\hat{t},y,s)$ has a local minimin at 
$(y_{\varepsilon,\alpha},s_{\varepsilon,\alpha})$ and
$u$ is a Barron--Jensen solution of \eqref{eq:HJu}--\eqref{eq:ID},
we have
\begin{equation}
 2\frac{\hat{t}-s_{\varepsilon,\alpha}}{\alpha^2}
 +H\left( y_{\varepsilon,\alpha},s_{\varepsilon,\alpha},u( y_{\varepsilon,\alpha},s_{\varepsilon,\alpha}),2e^{-\gamma \hat{t}}\frac{\hat{x}-y_{\varepsilon,\alpha}}{\varepsilon^2} \right)
=0.   
\label{eq:infp1}
\end{equation}
Since $\Phi(x,t,y_{\varepsilon,\alpha},s_{\varepsilon,\alpha})$ has a local minimum at $(\hat{x},\hat{t}) \in \mathcal{A}_{\rho}$,
we have
\begin{align}
&\phi_t(\hat{x},\hat{t})
=-\gamma e^{-\gamma \hat{t}}\dfrac{|\hat{x}-y_{\varepsilon,\alpha}|^2}{\varepsilon^2} 
 +2\dfrac{\hat{t}-s_{\varepsilon,\alpha}}{\alpha^2}, \label{eq:infpderi1}\\
&D_x\phi(\hat{x},\hat{t})
=2e^{-\gamma \hat{t}}\dfrac{\hat{x}-y_{\varepsilon,\alpha}}{\varepsilon^2}.  \label{eq:infpderi2}
\end{align}
Substituting \eqref{eq:infpderi1} and \eqref{eq:infpderi2} for \eqref{eq:infp1},
we obtain
\begin{equation}
\phi_t(\hat{x},\hat{t})
 +H\left(y_{\varepsilon,\alpha},s_{\varepsilon,\alpha},u( y_{\varepsilon,\alpha},s_{\varepsilon,\alpha}),D_x\phi(\hat{x},\hat{t}) \right)
=-\gamma e^{-\gamma \hat{t}}\dfrac{|\hat{x}-y_{\varepsilon,\alpha}|^2}{\varepsilon^2}.
\label{eq:infpeq}
\end{equation}
Here we have
$|u_{\varepsilon,\alpha}(\hat{x},\hat{t})| \leqq \frac{\mathcal{M}^2}{2}$ 
and 
\[
|D_x\phi(\hat{x},\hat{t})|
=2e^{-\gamma \hat{t}}\dfrac{|\hat{x}-y_{\varepsilon,\alpha}|}{\varepsilon^2}
\leqq \frac{2\mathcal{M}}{\varepsilon}.
\]
We define a modulus of continuity $\mu_{\varepsilon,\mathcal{A}}$ for $H$
in $\overline{\mathcal{A}} \times \overline{B_{\frac{\mathcal{M}^2}{2}}(0)} \times \overline{B_{\frac{2\mathcal{M}}{\varepsilon}}(0)}$
and obtain
\begin{equation}
\begin{split}
&H\left( y_{\varepsilon,\alpha},s_{\varepsilon,\alpha},u(y_{\varepsilon,\alpha},s_{\varepsilon,\alpha}),D_x\phi(\hat{x},\hat{t}) \right) \\
&\geqq 
H\left( y_{\varepsilon,\alpha},\hat{t},u(y_{\varepsilon,\alpha},s_{\varepsilon,\alpha}),D_x\phi(\hat{x},\hat{t}) \right)
 -\mu_{\varepsilon,\mathcal{A}}(\mathcal{M}\alpha).  
\end{split}
\label{eq:infpinq1}
\end{equation}
By {\Hu},
\begin{equation}
\begin{split}
&H\left( y_{\varepsilon,\alpha},\hat{t},u(y_{\varepsilon,\alpha},s_{\varepsilon,\alpha}),D_x\phi(\hat{x},\hat{t}) \right) \\
&\geqq 
H\left(y_{\varepsilon,\alpha},\hat{t},u_{\varepsilon,\alpha}(\hat{x},\hat{t}),D_x\phi(\hat{x},\hat{t}) \right)
 -K_3|u_{\varepsilon,\alpha}(\hat{x},\hat{t})--u(y_{\varepsilon,\alpha},s_{\varepsilon,\alpha})|.
\end{split}
\label{eq:infpinq2}
\end{equation}
By {\Hx} and using the Schwarz's inequality,
\begin{equation}
\begin{split}
&H\left(y_{\varepsilon,\alpha},\hat{t},u_{\varepsilon,\alpha}(\hat{x},\hat{t}),D_x\phi(\hat{x},\hat{t}) \right) \\
&\geqq 
 H\left(\hat{x},\hat{t},u_{\varepsilon,\alpha}(\hat{x},\hat{t}),D_x\phi(\hat{x},\hat{t}) \right)
 -C_1(\beta+|D_x\phi(\hat{x},\hat{t})|)|\hat{x}-y_{\varepsilon,\alpha}| \\
&\geqq 
 H\left(\hat{x},\hat{t},u_{\varepsilon,\alpha}(\hat{x},\hat{t}),D_x\phi(\hat{x},\hat{t}) \right)
 -\dfrac{C_1\beta}{2}e^{\gamma \hat{t}}\varepsilon^2 \\
&\hspace{1cm}
 -\frac{C_1\beta}{2} e^{-\gamma \hat{t}}\dfrac{|\hat{x}-y_{\varepsilon,\alpha}|^2}{\varepsilon^2}
 -2C_1e^{-\gamma \hat{t}}\frac{|\hat{x}-y_{\varepsilon,\alpha}|^2}{\varepsilon^2}.
\end{split}
\label{eq:infpinq3}
\end{equation}
Combining \eqref{eq:infpeq} to \eqref{eq:infpinq3},
we have
\begin{align*}
&\phi_t(\hat{x},\hat{t})
 +H\left(\hat{x},\hat{t}, u_{\varepsilon,\alpha}(\hat{x},\hat{t}),D_x\phi(\hat{x},\hat{t}) \right) \\
&\leqq \left( \frac{C_1\beta}{2}+2C_1-\gamma  \right) e^{-\gamma \hat{t}}\dfrac{|\hat{x}-y_{\varepsilon,\alpha}|^2}{\varepsilon^2}
 +\mu_{\varepsilon,\mathcal{A}}(\mathcal{M}\alpha)
 +\dfrac{C_1\beta}{2}e^{\gamma\hat{t}}\varepsilon^2 \\
&\hspace{1cm}
 +K_3 |u_{\varepsilon,\alpha}(\hat{x},\hat{t})-u(y_{\varepsilon,\alpha},s_{\varepsilon,\alpha})|.
\end{align*}
We denote $\omega_{\mathcal{A}}$ as a modulus of continuity for $u$ on $\overline{\mathcal{A}}$ and obtain
\begin{align*}
|u_{\varepsilon,\alpha}(\hat{x},\hat{t})-u(y_{\varepsilon,\alpha},s_{\varepsilon,\alpha})|
&=u_{\varepsilon,\alpha}(\hat{x},\hat{t})-u(y_{\varepsilon,\alpha},s_{\varepsilon,\alpha}) \\
&\leqq u(\hat{x},\hat{t})-u(y_{\varepsilon,\alpha},s_{\varepsilon,\alpha}) \\
&\leqq \omega_{\mathcal{A}}(\mathcal{M}(\varepsilon+\alpha)).
\end{align*}
Therefore if $\gamma \geqq (\frac{\beta}{2}+2)C_1+K_3$,
we have
\begin{equation}
 \begin{split}
  &\phi_t(\hat{x},\hat{t})
 +H\left(\hat{x},\hat{t}, u_{\varepsilon,\alpha}(\hat{x},\hat{t}),D_x\phi(\hat{x},\hat{t}) \right) \\
  &\leqq \frac{C_1\beta}{2}e^{\gamma\hat{t}}\varepsilon^2 
 +K_3\omega_{\mathcal{A}}(\mathcal{M}(\varepsilon+\alpha))
 +\mu_{\varepsilon,\mathcal{A}}(\mathcal{M}\alpha).   
 \end{split}
 \label{eq:infconv1}
\end{equation}

\noindent
$(2^{\circ})$
Since $u_{\varepsilon,\alpha}$ is Lipschitz continuous in $\mathcal{A}_{\rho}$,
then \cref{eq:infconv1} holds almost everywhere in $\mathcal{A}_{\rho}$ (\cite[Proposition II.1.9]{BC.97}).
By \cref{lem:aesol}, $u_{\varepsilon,\alpha}$ is a viscosity subsolution of 
\begin{align*}
&u_t(x,t)+H(x,t,u(x,t),D_x u(x,t)) \\
&=\dfrac{C_1\beta}{2}e^{\gamma t}\varepsilon^2 
 +K_3 \omega_{\mathcal{A}}(\mathcal{M}(\varepsilon+\alpha))
 +\mu_{\varepsilon,\mathcal{A}}(\mathcal{M}\alpha)
\quad \mathrm{in} \ \mathcal{A}_{\rho},   
\end{align*}
which concludes the proof.
\end{proof}

The following result immediately follows from \cref{lem:infconvsol}.
We use this result to prove \cref{thm:leylb}.
In what follows, for $r>0$, we set
\[
\mathcal{M}_r:=
\left( 2\max_{(y,s) \in \overline{B_{r}(x_0)} \times [0,T]} |u(y,s)| \right)^{\frac{1}{2}}.
\]

\begin{cor}\label{cor:infconvsp}
Assume {\Hx},  {\Hu} and {\Hc}.
Let $u \in C(\mathbb{R}^n \times [0,T))$ be a Barron--Jensen solution of \eqref{eq:HJu}--\eqref{eq:ID}.
Let $\gamma \geqq (\frac{\beta}{2}+2)C_1+K_3$, $\alpha \in (0,\frac{\rho}{2\mathcal{M}_{\mathcal{A}}})$ and
$\varepsilon \in (0,\frac{e^{\frac{\gamma}{2}T}\rho}{2\mathcal{M}_{\mathcal{A}}})$.
Then the inf-convolution
\begin{equation}
u_{\varepsilon}(x,t):=
\inf_{y \in \overline{B_{r}(x_0)}} \left\{ u(y,t)+e^{-\gamma t}\dfrac{|x-y|^2}{\varepsilon^2} \right\}
\label{eq:infconvsp}
\end{equation}
is a viscosity subsolution of
\[
u_t(x,t)+H(x,t,u(x,t),D_x u(x,t))=
\dfrac{\beta C_1}{2}e^{\gamma t}\varepsilon^2
\quad \mathrm{in} \ B_{r-\rho}(x_0) \times (0,T).
\]
\end{cor}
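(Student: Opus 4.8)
The plan is to deduce \cref{cor:infconvsp} from \cref{lem:infconvsol} by removing the $t$-regularization while keeping the $x$-regularization, i.e.\ by sending the time-scale parameter $\alpha$ to $0$. First I would observe that the purely spatial inf-convolution $u_\varepsilon$ in \eqref{eq:infconvsp} arises as a limit (indeed a monotone sup) of the space-time inf-convolutions $u_{\varepsilon,\alpha}$ from \eqref{eq:infconvt}: as $\alpha \to +0$, the penalization $|t-s|^2/\alpha^2$ forces the minimizing time $s$ to coincide with $t$, so $u_{\varepsilon,\alpha}(x,t) \uparrow u_\varepsilon(x,t)$ for each $(x,t)$ (using that $u$ is continuous, hence the infimum over the compact slice $\overline{B_r(x_0)}$ is attained). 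One should also note that a localization in time is harmless here because $u_\varepsilon(x,t)$ only involves the slice at the \emph{same} time $t$, so working on $\overline{B_r(x_0)} \times [0,T]$ rather than $\overline{\mathcal{A}}$ is legitimate, and the constant $\mathcal{M}_{\mathcal{A}}$ can be replaced by $\mathcal{M}_r$.

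Next I would invoke the stability of viscosity subsolutions under this limiting procedure. By \cref{lem:infconvsol}, each $u_{\varepsilon,\alpha}$ is a viscosity subsolution of \eqref{eq:infconvapeq} on $\mathcal{A}_\rho$, i.e.\ with right-hand side
\[
\frac{C_1\beta}{2}e^{\gamma t}\varepsilon^2 + K_3\,\omega_{\mathcal{A}}(\mathcal{M}_{\mathcal{A}}(\varepsilon+\alpha)) + \mu_{\varepsilon,\mathcal{A}}(\mathcal{M}_{\mathcal{A}}\alpha).
\]
As $\alpha \to +0$ the two error terms $K_3\,\omega_{\mathcal{A}}(\mathcal{M}_{\mathcal{A}}(\varepsilon+\alpha))$ and $\mu_{\varepsilon,\mathcal{A}}(\mathcal{M}_{\mathcal{A}}\alpha)$ tend to $K_3\,\omega_{\mathcal{A}}(\mathcal{M}_{\mathcal{A}}\varepsilon)$ and $0$ respectively; but one actually wants the residual $K_3\,\omega_{\mathcal{A}}(\mathcal{M}_{\mathcal{A}}\varepsilon)$ to disappear as well, which is why the statement of \cref{cor:infconvsp} has only the clean term $\frac{\beta C_1}{2}e^{\gamma t}\varepsilon^2$. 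The cleanest way to get this is to redo the one-line argument of step $(1^\circ)$ of \cref{lem:infconvsol} directly for $u_\varepsilon$: since $u_\varepsilon$ involves no time-mollification, the terms $\mu_{\varepsilon,\mathcal{A}}(\mathcal{M}\alpha)$ (coming from moving $s_{\varepsilon,\alpha}$ to $\hat t$) and $K_3\,\omega_{\mathcal{A}}(\mathcal{M}(\varepsilon+\alpha))$ (coming from $|u_{\varepsilon,\alpha}(\hat x,\hat t) - u(y_{\varepsilon,\alpha},s_{\varepsilon,\alpha})|$, which uses $|s_{\varepsilon,\alpha}-\hat t|\le \mathcal{M}\alpha$) are simply absent, because now $\hat t = s$ identically and the minimizing point is $(y_\varepsilon,\hat t)$ with $u_\varepsilon(\hat x,\hat t) = u(y_\varepsilon,\hat t) + e^{-\gamma\hat t}|\hat x - y_\varepsilon|^2/\varepsilon^2 \ge u(y_\varepsilon,\hat t)$, so the $K_3$-term in \eqref{eq:infpinq2} vanishes. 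Following the chain \eqref{eq:infp1}–\eqref{eq:infpinq3} with these simplifications and $\gamma \ge (\tfrac\beta2+2)C_1 + K_3$ gives exactly
\[
\phi_t(\hat x,\hat t) + H\bigl(\hat x,\hat t, u_\varepsilon(\hat x,\hat t), D_x\phi(\hat x,\hat t)\bigr) \le \frac{C_1\beta}{2} e^{\gamma\hat t}\varepsilon^2.
\]

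Finally, as in step $(2^\circ)$ of \cref{lem:infconvsol}, since $u_\varepsilon$ is locally Lipschitz on $B_{r-\rho}(x_0)\times(0,T)$ the pointwise inequality above, established at every point where a $C^1$ test function touches $u_\varepsilon$ from below in the classical (a.e.) sense, upgrades via \cite[Proposition II.1.9]{BC.97} to an a.e.\ inequality and then, by \cref{lem:aesol}, to the assertion that $u_\varepsilon$ is a viscosity subsolution of
\[
u_t(x,t) + H(x,t,u(x,t),D_x u(x,t)) = \frac{\beta C_1}{2} e^{\gamma t}\varepsilon^2 \quad \text{in } B_{r-\rho}(x_0)\times(0,T),
\]
which is the claim. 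I expect the only genuinely delicate point to be the bookkeeping in the first step: verifying that the minimizer $y_\varepsilon$ of $u_\varepsilon(\hat x,\hat t)$ lies strictly inside $B_r(x_0)$ (so that the touching-from-below property of $u$ as a Barron--Jensen solution can be applied at an interior point), which follows from the bound $|\hat x - y_\varepsilon| \le e^{\gamma T/2}\mathcal{M}_r\,\varepsilon \le \rho/2$ coming from $\varepsilon < e^{\gamma T/2}\rho/(2\mathcal{M}_{\mathcal{A}})$ together with $\hat x \in B_{r-\rho}(x_0)$; everything else is a direct transcription of the already-proven \cref{lem:infconvsol}.
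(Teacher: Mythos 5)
Your recommended second approach --- replaying step $(1^{\circ})$ of \cref{lem:infconvsol} directly for $u_\varepsilon$ (no time-penalization), then promoting the pointwise inequality via Lipschitz regularity, \cite[Proposition II.1.9]{BC.97} and \cref{lem:aesol} --- is exactly what the paper does, and it is correct. You also rightly diagnose why the naive $\alpha\to 0$ stability argument on $u_{\varepsilon,\alpha}$ does not suffice: the residual $K_3\,\omega_{\mathcal A}(\mathcal M_{\mathcal A}\varepsilon)$ would survive.

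One imprecision worth correcting: you write that ``the $K_3$-term in \eqref{eq:infpinq2} vanishes''. It does not. What changes is that it is no longer estimated by a modulus of continuity, because now the difference is computed exactly:
\[
|u_\varepsilon(\hat x,\hat t)-u(y_\varepsilon,\hat t)|
= e^{-\gamma\hat t}\,\frac{|\hat x-y_\varepsilon|^2}{\varepsilon^2},
\]
so {\Hu} contributes the term $K_3\,e^{-\gamma\hat t}|\hat x-y_\varepsilon|^2/\varepsilon^2$, which is then absorbed into the negative coefficient coming from $-\gamma e^{-\gamma\hat t}|\hat x-y_\varepsilon|^2/\varepsilon^2$ in \eqref{eq:infpeq}. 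This is precisely why the threshold must be $\gamma\geqq(\tfrac{\beta}{2}+2)C_1+K_3$ rather than $\gamma\geqq(\tfrac{\beta}{2}+2)C_1$; you do keep the $+K_3$ in your stated condition, so your final inequality is correct, but the sentence preceding it misdescribes the mechanism.
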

\begin{proof}
The proof is similar to \cref{lem:infconvsol}.
Let $\phi \in C^1(\mathbb{R}^n \times (0,T))$ such that
$u_{\varepsilon}-\phi$ has a minimum at $(\hat{x},\hat{t}) \in \mathcal{A}_{\rho}$ and
$y_{\varepsilon} \in \overline{B_r(x_0)}$ be a minimizer of $u_{\varepsilon}(\hat{x},\hat{t})$.
Then a function 
\[
\Phi_{\varepsilon}(x,y,t):=
u(y,t)+e^{-\gamma t}\frac{|x-y|^2}{\varepsilon^2}-\phi(x,t)
\]
attains a local minimum at $(\hat{x},y_{\varepsilon},\hat{t})$
and we obtain
\[
\phi_t(\hat{x},\hat{t})
 +H\left(y_{\varepsilon},\hat{t},u( y_{\varepsilon},\hat{t}),D_x\phi(\hat{x},\hat{t}) \right)
=-\gamma e^{-\gamma \hat{t}}\frac{|\hat{x}-y_{\varepsilon}|^2}{\varepsilon^2}.
\]
Since 
\[
|u_{\varepsilon}(\hat{x},\hat{t})-u(y_{\varepsilon},\hat{t})|
=e^{-\gamma \hat{t}}\frac{|\hat{x}-y_{\varepsilon}|^2}{\varepsilon^2},
\]
by {\Hx}, {\Hu} and using Schwarz's inequality,
we have 
\begin{align*}
&\phi_t(\hat{x},\hat{t})
 +H\left(\hat{x},\hat{t}, u_{\varepsilon}(\hat{x},\hat{t}),D_x\phi(\hat{x},\hat{t}) \right) \\
&\leqq \left( \frac{C_1\beta}{2}+2C_1+K_3-\gamma  \right) e^{-\gamma \hat{t}}\dfrac{|\hat{x}-y_{\varepsilon}|^2}{\varepsilon^2}
 +\frac{C_1\beta}{2}e^{\gamma\hat{t}}\varepsilon^2.
\end{align*}
Therefore if $\gamma \geqq (\frac{\beta}{2}+2)C_1+K_3$,
we have
\[
\phi_t(\hat{x},\hat{t})
 +H\left(\hat{x},\hat{t}, u_{\varepsilon,\alpha}(\hat{x},\hat{t}),D_x\phi(\hat{x},\hat{t}) \right)\leqq 
\frac{C_1\beta}{2}e^{\gamma\hat{t}}\varepsilon^2.
\]
The rest of the proof is the same as \cref{lem:infconvsol} $(2^{\circ})$.
\end{proof}

\subsection{Proof of \cref{thm:leylb}}

In this subsection, we prove \cref{thm:leylb}.
To do this, we use \cref{thm:equivvb}, \cref{cor:infconvsp}, \cref{thm:unCP}, and the following lemma.

\begin{lem}[{\cite[Lemma 4.1]{L.01}}]\label{lem:}
Assume {\Uo}.
Let $u \in C(\mathbb{R}^n \times [0,T))$ be a viscosity solution of \eqref{eq:HJu}--\eqref{eq:ID} and
$u_{\varepsilon}$ is defined in \eqref{eq:infconvsp}.
Then we have 
\begin{equation}
 u_{\varepsilon}(y,0)-u(y,0)
\leqq -\dfrac{\theta^2}{4}\varepsilon^2
\quad \mbox{for all $y \in \overline{B_r(x_0)}$.}
\label{eq:leylem}
\end{equation}
\end{lem}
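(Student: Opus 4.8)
The plan is to prove the inequality pointwise in $y \in \overline{B_r(x_0)}$ by exploiting that $\eqref{eq:infconvsp}$ is an infimum over $\overline{B_r(x_0)}$, so choosing the competitor $y$ itself (at time $0$) already gives $u_\varepsilon(y,0) \leqq u(y,0)$; the content is to improve the bound to $-\tfrac{\theta^2}{4}\varepsilon^2$ using assumption $\Uo$. First I would fix $y \in \overline{B_r(x_0)}$ and write
\[
u_\varepsilon(y,0)-u(y,0)
= \inf_{z \in \overline{B_r(x_0)}} \left\{ u(z,0)-u(y,0) + \frac{|y-z|^2}{\varepsilon^2} \right\},
\]
using that $e^{-\gamma \cdot 0}=1$ and $u(\cdot,0)$ agrees with $u_0$ (or rather $u(\cdot,0)\le u_0$; here I should be careful — since $u$ is a viscosity solution it satisfies $u(\cdot,0)=u_0$, so I will use $u_0$ in place of $u(\cdot,0)$ throughout, matching \cite[Lemma 4.1]{L.01}). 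The key is to produce, for each $y$, a direction along which $u_0$ decreases at rate at least $\theta$, coming from $\Uo$, and then pick $z = y - \delta q$ for a suitable unit-direction $q$ and optimize over the small scalar $\delta>0$.

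The main step is the following local inequality: under $\Uo$, for every $y \in \overline{B_r(x_0)}$ and every $v \in D^-u_0(y)$ (which is nonempty for a.e.\ $y$, and one has $|v|\geqq\theta$), the definition of the subgradient gives
\[
u_0(z) \geqq u_0(y) + \langle v, z-y\rangle + o(|z-y|) \quad (z \to y).
\]
Choosing $z = y - \delta \frac{v}{|v|}$ yields $u_0(z) - u_0(y) \geqq -\delta|v| + o(\delta) \geqq -\delta\theta + o(\delta)$, hmm wait that is the wrong direction — I want an \emph{upper} bound on $u_0(z)-u_0(y)$, not a lower one. So instead I should use a supergradient-type estimate or, more robustly, the viscosity \emph{decreasing principle} cited in the paper (after \cref{thm:leylb}: \cite[Lemma 2.3]{BL.06}, \cite[Proposition 4.1]{L.01}), which states precisely that $\Uo$ propagates to a decrease estimate of the form $u_0(y - \delta q) \leqq u_0(y) - \theta\delta$ for a well-chosen unit vector $q = q(y)$ and all small $\delta > 0$; this is the standard way $\Uo$ is used in \cite{L.01}. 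Granting that, take $z = y - \delta q(y) \in \overline{B_r(x_0)}$ for $\delta$ small (legitimate since $y$ ranges over the closed ball and $\delta$ can be taken as small as needed, or by first proving the estimate on a slightly smaller ball and passing to the closure, as in \cite{L.01}), so that
\[
u_\varepsilon(y,0)-u_0(y) \leqq u_0(z)-u_0(y) + \frac{|y-z|^2}{\varepsilon^2} \leqq -\theta\delta + \frac{\delta^2}{\varepsilon^2}.
\]

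Finally I would optimize the right-hand side over $\delta$: the quadratic $-\theta\delta + \delta^2/\varepsilon^2$ is minimized at $\delta = \theta\varepsilon^2/2$, giving value $-\theta^2\varepsilon^2/4$, which is exactly $\eqref{eq:leylem}$; I must check that this optimal $\delta$ is admissibly small, which holds for $\varepsilon$ in the stated range (and one can always shrink $\varepsilon$ further, or note the quoted constraint $\varepsilon \in (0, e^{\frac{\gamma}{2}T}\rho/(2\mathcal{M}_{\mathcal{A}}))$ already forces $\delta$ small relative to the geometry). The main obstacle, and the only genuinely delicate point, is the boundary issue: for $y$ on $\partial B_r(x_0)$ the competitor $z = y - \delta q(y)$ must still lie in $\overline{B_r(x_0)}$, and the decreasing principle must be invoked on the right set — following \cite[Lemma 4.1]{L.01} one handles this by first establishing the estimate on $B_{r}(x_0)$ (or an interior ball) using $\Uo$ on $B_{r}(x_0)$ and then taking limits, using continuity of $u_\varepsilon(\cdot,0)-u_0$; everything else is the elementary one-variable optimization above.
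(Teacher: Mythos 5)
Your plan is the right one and matches the argument behind \cite[Lemma 4.1]{L.01} that the paper is relying on: since $u(\cdot,0)=u_0$ and $e^{-\gamma\cdot 0}=1$, one reduces to $u_0$, invokes the viscosity decreasing principle supplied by {\Uo}, and minimizes $-\theta\delta+\delta^2/\varepsilon^2$ at $\delta=\theta\varepsilon^2/2$ to obtain $-\theta^2\varepsilon^2/4$. You also correctly caught that the raw subgradient inequality points the wrong way and that the decreasing principle is precisely what furnishes the needed upper bound $u_0(z)\leqq u_0(y)-\theta\delta$ for a nearby competitor $z$.

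The treatment of the boundary, however, is a genuine gap and not a removable technicality. The decreasing principle needs the test ball $\overline{B_\delta(y)}$ to lie inside the region where {\Uo} holds, i.e.\ inside $B_r(x_0)$; with the optimal $\delta=\theta\varepsilon^2/2$ this only covers $y\in B_{r-\theta\varepsilon^2/2}(x_0)$. Your ``establish on an interior ball, then pass to the limit by continuity'' step cannot extend the estimate to $\overline{B_r(x_0)}$: the inequality is never proved on any neighbourhood of $\partial B_r(x_0)$, so there is nothing to take a limit with. Worse, for the statement exactly as written the conclusion can actually fail at the boundary: take $H\equiv 0$ (so that $u(x,t)=u_0(x)$ solves \eqref{eq:HJu}--\eqref{eq:ID}) and $u_0(x)=\theta\max\{r-|x-x_0|,0\}$. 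This $u_0$ is Lipschitz and satisfies {\Uo} on $B_r(x_0)$ (away from $x_0$ it is $C^1$ with $|Du_0|=\theta$, and $D^-u_0(x_0)=\emptyset$), yet for $y\in\partial B_r(x_0)$ one has $u_0(y)=0\leqq u_0(z)$ for all $z\in\overline{B_r(x_0)}$, so the infimum in \eqref{eq:infconvsp} at $t=0$ is attained at $z=y$ and $u_\varepsilon(y,0)-u(y,0)=0$, not $\leqq-\theta^2\varepsilon^2/4$. The clean fix, which is also all that the downstream use in the proof of \cref{thm:leylb} needs since $\varepsilon\to 0$ at the end, is to state the conclusion on $\overline{B_{r-\theta\varepsilon^2/2}(x_0)}$ (equivalently, to assume {\Uo} on a strictly larger ball); your optimization then goes through verbatim on that set.
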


\begin{proof}[Proof of \cref{thm:leylb}]
By \cref{thm:equivvb},
$u$ is also a Barron--Jensen solution of \eqref{eq:HJu}--\eqref{eq:ID}.
By \cref{cor:infconvsp},
for $0<\rho<r$, $\varepsilon \in (0,\frac{e^{\frac{\gamma}{2}T}\rho}{2M_r})$ and
$\gamma \geqq \left( \frac{\beta}{2}+2 \right)C_1+K_3$,
the inf-convolution $u_{\varepsilon}$ is a viscosity subsolution of
\[
u_t(x,t)+H(x,t,u(x,t),D_x u(x,t))
=\dfrac{C_1\beta}{2}e^{\gamma t}\varepsilon^2
\quad \mathrm{in} \ B_{r-\rho}(x_0) \times (0,T).
\]

We derive the upper and the lower estimate of $u_{\varepsilon}-u$.
On the one hand,
by \cref{thm:unCP},
we have
\[
u_{\varepsilon}(x,t)-u(x,t)
\leqq \sup_{y \in \overline{B_{r}(x_0)}} e^{-K_3 t} (u_{\varepsilon}-u)(y,0)
 +e^{-K_3 t} \int_0^t e^{K_3 s}\dfrac{\beta C_1}{2}e^{\gamma s}\varepsilon^2\,ds
\]
for all $(x,t) \in \overline{\mathcal{E}(x_0,r)} \cap (\overline{B_{r-\rho}(x_0)} \times [0,T])$,
where $\mathcal{E}(x_0,r)$ is defined in \eqref{eq:ourdod}.
By \eqref{eq:leylem} and
\begin{equation}
e^{-K_3 t} \int_0^t e^{K_3 s}\dfrac{\beta C_1}{2}e^{\gamma s}\varepsilon^2\,ds
\leqq e^{\gamma t} \int_0^t \dfrac{\beta C_1}{2}\varepsilon^2\,ds
=\dfrac{\beta C_1}{2}e^{\gamma t}t\varepsilon^2,
\label{eq:leyinte}
\end{equation}
we obtain
\begin{equation}
u_{\varepsilon}(x,t)-u(x,t)
\leqq -\frac{\theta^2}{4}e^{-K_3 t}\varepsilon^2
 +\dfrac{\beta C_1}{2}e^{\gamma t}t\varepsilon^2.
\label{eq:leylbest1}
\end{equation}

On the other hand,
let $\phi \in C^1(\mathbb{R}^n \times (0,T))$ such that
$u-\phi$ has a minimum at $(x,t) \in \mathcal{E}(x_0,r) \cap (B_{r-\rho}(x_0) \times [0,T])$.
Then there exists a continuous function $\omega: [0,\infty) \to [0,\infty)$ satisfying
$\lim_{r \to +0} \omega(r)=0$ such that
\begin{align*}
u(y,t)
&\geqq u(x,t)+\phi(y,t)-\phi(x,t) \\
&\geqq u(x,t)+\langle D_x\phi(x,t),y-x \rangle +|y-x|\omega(|y-x|)
\quad 
\mbox{for all $y \in \mathbb{R}^n$}.
\end{align*}
Let $y_{\varepsilon} \in \overline{B_r(x_0)}$ be a minimizer of $u_{\varepsilon}(x,t)$.
Then 
\begin{align*}
u_{\varepsilon}(x,t)
&=u(y_{\varepsilon},t) +e^{-\gamma t}\dfrac{|x-y_{\varepsilon}|^2}{\varepsilon^2} \\
&\geqq u(x,t)+\langle D_x\phi(x,t),y_{\varepsilon}-x \rangle
 +e^{-\gamma t}\dfrac{|x-y_{\varepsilon}|^2}{\varepsilon^2} 
  +|y_{\varepsilon}-x|\omega(|y_{\varepsilon}-x|).
\end{align*}
Let us consider 
$g(y):=\langle D_x\phi(x,t),y-x \rangle+e^{-\gamma t}\dfrac{|x-y|^2}{\varepsilon^2}$.
Then 
\begin{align*}
g(y)
&=\dfrac{e^{-\gamma t}}{\varepsilon^2} \left| \langle y-x,\frac{D_x\phi(x,t)}{2}e^{\gamma t}\varepsilon^2 \rangle \right|^2
 -\dfrac{|D_x\phi(x,t)|^2}{4}e^{\gamma t}\varepsilon^2 \\
&\geqq -\dfrac{|D_x\phi(x,t)|^2}{4}e^{\gamma t}\varepsilon^2.  
\end{align*}
Since $u \in \mathrm{Lip}(\mathbb{R}^n \times [0,T))$, we denote $L_u$ as a Lipschitz constant of $u$ in $B_r(x_0)$ and obtain
\begin{align*}
e^{-\gamma t}\dfrac{|x-y_{\varepsilon}|^2}{\varepsilon^2}
&=u_{\varepsilon}(x,t)-u(y_{\varepsilon},t) \\
&\leqq u(x,t)-u(y_{\varepsilon},t)
\leqq L_u|x-y_{\varepsilon}|,  
\end{align*}
which implies $|y_{\varepsilon}-x| \leqq L_u e^{\gamma T}\varepsilon^2$.
Therefore we have
\begin{equation}
u_{\varepsilon}(x,t)-u(x,t)
\geqq -\dfrac{|D_x\phi(x,t)|^2}{4}e^{\gamma t}\varepsilon^2 - L_ue^{\gamma T}\varepsilon^2\omega(L_ue^{\gamma T}\varepsilon^2).
\label{eq:leylbest2}
\end{equation}

Combining \eqref{eq:leylbest1} and \eqref{eq:leylbest2}, we have
\[
-\dfrac{|D_x\phi(x,t)|^2}{4}e^{\gamma t}\varepsilon^2 - L_ue^{\gamma T}\varepsilon^2\omega(L_ue^{\gamma T}\varepsilon^2)
\leqq -\frac{\theta^2}{4}e^{-K_3 t}\varepsilon^2 +\dfrac{\beta C_1}{2}e^{\gamma t}t\varepsilon^2
\]
and therefore
\[
|D_x\phi(x,t)|^2 
\geqq \theta^2 e^{-(\gamma+K_3)t}-2C_1\beta t -4L_ue^{\gamma T}\omega(L_ue^{\gamma T}\varepsilon^2)
\]
Letting $\varepsilon,\rho \to 0$, we obtain
\[
|D_x\phi(x,t)|^2 
\geqq \theta^2 e^{-(\gamma+K_3)t}-2C_1\beta t
\]
for all $(x,t) \in \mathcal{E}(x_0,r)$.
Taking $\gamma=\left( \frac{\beta}{2}+2 \right)C_1+K_3$, we complete the proof.
\end{proof}

\begin{rem}\label{rem:leyobrem}
We can derive a sharper estimate by directly calculating the integral term in \eqref{eq:leyinte}.
In fact, we have 
\begin{align*}
e^{-K_3 t} \int_0^t e^{K_3 s}\dfrac{\beta C_1}{2}e^{\gamma s}\varepsilon^2\,ds
&=\dfrac{\beta C_1}{2}\varepsilon^2 e^{-K_3t} \int_0^t e^{(\gamma+K_3)s}\,ds \\
&=\dfrac{\beta C_1}{2(\gamma+K_3)}\varepsilon^2 e^{-K_3t}(e^{(\gamma+K_3)t}-1) \\
&=\dfrac{\beta C_1}{2(\gamma+K_3)}\varepsilon^2(e^{\gamma t}-e^{-K_3t})
\label{eq:leyinte} 
\end{align*}
and therefore, we obtain
\begin{equation}
u_{\varepsilon}(x,t)-u(x,t)
\leqq -\frac{\theta^2}{4}e^{-K_3 t}\varepsilon^2
 +\dfrac{\beta C_1}{2(\gamma+K_3)}\varepsilon^2(e^{\gamma t}-e^{-K_3t})
\label{eq:leylbest1betu}
\end{equation}
instead of \eqref{eq:leylbest1}.
Combining \eqref{eq:leylbest1betu} and \eqref{eq:leylbest2} and then letting $\varepsilon,\rho \to 0$, we have
\[
|D_x\phi(x,t)|^2 
\geqq \theta^2 e^{-(\gamma+K_3)t}-\dfrac{2\beta C_1}{\gamma+K_3} (1-e^{-(\gamma+K_3) t}).
\]
Taking $\gamma=\left( \frac{\beta}{2}+2 \right)C_1+K_3$, we obtain
\[
|D_x\phi(x,t)|^2 
\geqq \theta^2 e^{-((\frac{\beta}{2}+2)C_1+2K_3)t}-\dfrac{4\beta C_1}{(\beta+4)C_1+4K_3} (1-e^{-((\frac{\beta}{2}+2)C_1+2K_3)t}).
\]
\end{rem}

\section{Dynamical approach}\label{sec:our}

In this section, we derive gradient estimates for solutions to \eqref{eq:HJu}--\eqref{eq:ID} using the approach based on \cite{HH.23}.
More precisely, we use the Lie equation and study how the initial gradients propagate along the solution of \eqref{eq:LHsys}.

\subsection{Estimates of solutions to the Lie equation}

In this subsection, We derive some estimates for a solution $(\xi,\eta,u_{\xi})$ of \eqref{eq:LHsys}--\eqref{eq:xeuTCp}
if $u$ is differentiable at $(x,t)$.
That is, we solve \eqref{eq:LHsys} with the terminal condition
\begin{equation}
\xi(t)=x, \quad \eta(t)=D_x u(x,t), \quad u_{\xi}(t)=u(x,t).
\label{eq:xeuTC}
\end{equation}

\begin{prop}\label{prop:estRu1}
Assume {\Hx}--{\Hr} and {\Hsc}.
Let $u \in C(\mathbb{R}^n \times [0,T))$ be a viscosity solution of \eqref{eq:HJu}--\eqref{eq:ID}.
Let $(x,t) \in \mathcal{R}(u)$
and $(\xi,\eta,u_{\xi})$ be a solution of \eqref{eq:LHsys}--\eqref{eq:xeuTC}.
Then if $(C_1,K_3) \neq (0,0)$,
\begin{equation}
\begin{cases}
|D_x u(x,t)-\eta(0)|
\leqq \left( \frac{C_1\beta}{C_1+K_3}+|D_x u(x,t)| \right)(e^{(C_1+K_3)t}-1), \\
|D_x u(x,t)-\eta(0)|
\leqq \left( \frac{C_1\beta}{C_1+K_3}+|\eta(0)| \right)(e^{(C_1+K_3)t}-1),
\end{cases}
\label{eq:estDu1}
\end{equation}
where $C_1$ and $\beta$ are the constants in {\Hx} and
\begin{equation}
\begin{split}
&\phantom{\leqq} |\eta(0)|e^{-(C_1+K_3)t}-\frac{C_1\beta}{C_1+K_3}(1-e^{-(C_1+K_3)t}) \\
&\leqq |D_x u(x,t)| \\
&\leqq |\eta(0)|e^{(C_1+K_3)t}+\frac{C_1\beta}{C_1+K_3}(e^{(C_1+K_3)t}-1).
\end{split}
\label{eq:estDu2}
\end{equation}
Moreover, 
\begin{gather}
\begin{split}
&\phantom{\leqq} |\eta(0)| e^{-C_1t} -\beta(1-e^{-C_1t}) \\
&\leqq |D_x u(x,t)| \\
&\leqq |\eta(0)| e^{C_1t}+\beta(e^{C_1t}-1)
\end{split}
\quad \mbox{if $K_3=0$}, \label{eq:estDuk} \\
|\eta(0)| e^{-(C_1+K_3)t} 
\leqq |D_x u(x,t)|
\leqq |\eta(0)| e^{(C_1+K_3)t}
\quad \mbox{if $\beta=0$}, \label{eq:estDub} \\
|\eta(0)| e^{-K_3t} 
\leqq |D_x u(x,t)|
\leqq |\eta(0)| e^{K_3t}
\quad \mbox{if $C_1=0$}, \label{eq:estDuc} \\
D_x u(x,t)=\eta(0)
\quad \mbox{if $(C_1,K_3)=(0,0)$}. \label{eq:estDuck}
\end{gather}
\end{prop}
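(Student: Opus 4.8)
The plan is to analyze the second equation in the Lie system \eqref{eq:LHsys}(b), namely $\eta'(s) = -D_x H(\xi(s),s,u_\xi(s),\eta(s)) - D_u H(\xi(s),s,u_\xi(s),\eta(s))\eta(s)$, and to convert the assumptions \Hx{} and \Hu{} (which are phrased as Lipschitz bounds on $H$) into pointwise bounds on the derivatives $D_x H$ and $D_u H$. Specifically, \Hx{} gives $|D_x H(x,t,u,p)| \leqq C_1(\beta + |p|)$ and \Hu{} gives $|D_u H(x,t,u,p)| \leqq K_3$, both valid everywhere since $H \in C^2$ by \Hr{}. Plugging these into \eqref{eq:LHsys}(b) yields
\[
|\eta'(s)| \leqq C_1\beta + C_1|\eta(s)| + K_3|\eta(s)| = C_1\beta + (C_1+K_3)|\eta(s)|.
\]
From here the estimate $|D_x u(x,t)| = |\eta(t)|$ in terms of $|\eta(0)|$ is a textbook Gronwall argument applied to $g(s) := |\eta(s)|$, which is Lipschitz (hence a.e. differentiable) with $|g'(s)| \leqq C_1\beta + (C_1+K_3)g(s)$.

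First I would carry out the upper bound in \eqref{eq:estDu2}: integrating $g'(s) \leqq C_1\beta + (C_1+K_3)g(s)$ from $0$ to $t$ via the integrating factor $e^{-(C_1+K_3)s}$ gives $g(t) \leqq g(0)e^{(C_1+K_3)t} + \frac{C_1\beta}{C_1+K_3}(e^{(C_1+K_3)t}-1)$. For the lower bound, I would instead use $g'(s) \geqq -C_1\beta - (C_1+K_3)g(s)$, which by the same integrating factor yields $g(t) \geqq g(0)e^{-(C_1+K_3)t} - \frac{C_1\beta}{C_1+K_3}(1-e^{-(C_1+K_3)t})$. This establishes \eqref{eq:estDu2}. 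Next, for the two displayed bounds in \eqref{eq:estDu1}, I would estimate $|\eta(t)-\eta(0)| = |\int_0^t \eta'(s)\,ds| \leqq \int_0^t (C_1\beta + (C_1+K_3)|\eta(s)|)\,ds$ and then bound $|\eta(s)|$ under the integral sign either by its upper estimate in terms of $|\eta(0)|$ (giving the second inequality of \eqref{eq:estDu1}) or, symmetrically, by running the system backward from $s=t$ so that $|\eta(s)|$ is controlled by $|\eta(t)| = |D_x u(x,t)|$ (giving the first inequality). Each of these reduces to integrating an exponential and simplifying, using $\frac{C_1\beta}{C_1+K_3}(1 + \frac{C_1\beta}{C_1+K_3}\cdot\text{(stuff)})$-type rearrangements — routine but requiring care to land exactly on the stated form $(\frac{C_1\beta}{C_1+K_3} + |D_x u(x,t)|)(e^{(C_1+K_3)t}-1)$.

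The special cases \eqref{eq:estDuk}--\eqref{eq:estDuck} then follow by specializing the differential inequality. If $K_3 = 0$, then $|\eta'(s)| \leqq C_1\beta + C_1|\eta(s)|$, and the Gronwall bound with constant $C_1$ in place of $C_1+K_3$ gives \eqref{eq:estDuk}. If $\beta = 0$, the inhomogeneous term drops and $|\eta'(s)| \leqq (C_1+K_3)|\eta(s)|$ gives the purely exponential bounds \eqref{eq:estDub} by Gronwall. If $C_1 = 0$, then $|\eta'(s)| \leqq K_3|\eta(s)|$ gives \eqref{eq:estDuc}. If $(C_1,K_3)=(0,0)$, then $D_x H \equiv 0$ and $D_u H \equiv 0$ along the curve, so $\eta' \equiv 0$ and $\eta$ is constant, i.e.\ $D_x u(x,t) = \eta(t) = \eta(0)$, which is \eqref{eq:estDuck}.

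The main obstacle I anticipate is not conceptual but bookkeeping: matching the constants in \eqref{eq:estDu1} precisely to the stated expressions requires correctly iterating the Gronwall estimate (first bound $|\eta(s)|$, then integrate), and being careful about which endpoint serves as the "known" data. A secondary subtlety is justifying the use of Gronwall's lemma for $g(s) = |\eta(s)|$: since $\eta$ is $C^2$ by the Herglotz/Lie equation regularity, $g$ is Lipschitz and satisfies the differential inequality a.e.\ (with $|g'| \leqq |\eta'|$ wherever $g$ is differentiable, and by the triangle inequality at points where $\eta(s)=0$), so the integral form of Gronwall applies without issue. One should also note that \eqref{eq:estDu1}--\eqref{eq:estDu2} only use \Hx{}, \Hu{}, and the structure of \eqref{eq:LHsys}(b) — the full strength of \Hr{} and \Hsc{} enters only insofar as it guarantees the Lie equation holds and $(\xi,\eta,u_\xi)$ is a well-defined $C^2$ solution, which is already provided by the propositions quoted in \cref{sec:pre}.
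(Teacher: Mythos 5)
Your proposal is correct and uses essentially the same approach as the paper: convert the Lipschitz assumptions \Hx{} and \Hu{} into pointwise derivative bounds, plug these into the Lie equation \eqref{eq:LHsys}(b), and apply Gronwall's lemma. The only (inessential) structural difference is the order of steps: you prove \eqref{eq:estDu2} first by applying Gronwall directly to $g(s) = |\eta(s)|$, then feed that bound back into $\int_0^t |\eta'(s)|\,ds$ to get \eqref{eq:estDu1}, whereas the paper applies Gronwall to $\tau \mapsto |\eta(t)-\eta(\tau)|$ to derive \eqref{eq:estDu1} first and then obtains \eqref{eq:estDu2} as an immediate consequence via the triangle inequality.
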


\begin{rem}
We note that \cref{eq:estDuk} is also obtained in \cite[Proposition 3.1]{HH.23} when $H$ does not depend on the variable $u$.
\end{rem}

\begin{proof}
It suffices  to prove \eqref{eq:estDu1}
since \eqref{eq:estDu2} to \eqref{eq:estDuck} are immediate from \eqref{eq:estDu1}.
Let $\tau \in [0,t)$.
Integrating both the sides of \eqref{eq:LHsys}(b) over $[\tau,t]$,
we have
\begin{align*}
\eta(t)-\eta(\tau)
&=-\int_{\tau}^t D_x H(\xi(s),s,u_{\xi}(s),\eta(s))\,ds \\
&\hspace{2cm} -\int_{\tau}^t D_u H(\xi(s),s,u_{\xi}(s),\eta(s))\eta(s)\,ds.  
\end{align*}
By {\Hx} and {\Hu}, 
\begin{align*}
|\eta(t)-\eta(\tau)|
&\leqq \int_{\tau}^t |D_x H(\xi(s),s,u_{\xi}(s),\eta(s))|\,ds  \\
&\hspace{2cm} +\int_{\tau}^t |D_u H(\xi(s),s,u_{\xi}(s),\eta(s))||\eta(s)| \,ds \\
&\leqq \int_{\tau}^t C_1(\beta +|\eta(s)|) \,ds +\int_{\tau}^t K_3|\eta(s)|\,ds \\
&\leqq \int_{\tau}^t (C_1\beta+(C_1+K_3)(|\eta(t)|+|\eta(t)-\eta(s)|))\,ds \\
&=(C_1\beta+(C_1+K_3)|\eta(t)|)(t-\tau) \\
&\hspace{2cm} +(C_1+K_3)\int_{\tau}^t |\eta(t)-\eta(s)|\,ds.
\end{align*}
Applying Gronwall's lemma and then taking $\tau=0$, we have
\begin{align*}
&|\eta(t)-\eta(0)| \\
&\leqq (C_1\beta+(C_1+K_3)|\eta(t)|)t \\
&\phantom{\leqq}  +e^{(C_1+K_3)t} \int_0^t e^{-(C_1+K_3)(t-s)} (C_1+K_3) (C_1\beta+(C_1+K_3)|\eta(t)|)(t-s) \, ds \\
&=(C_1\beta+(C_1+K_3)|\eta(t)|) \left\{ t +(C_1+K_3) \int_0^t (t-s)e^{(C_1+K_3)s} \, ds \right\}.
%\quad (\tau \in [0,t)).
\end{align*}
If $(C_1,K_3) \neq (0,0)$, 
\begin{align*}
&(C_1+K_3) \int_0^t (t-s) e^{(C_1+K_3)s} \, ds \\
&=\left[ (t-s) e^{(C_1+K_3)s} \right]_0^t+\int_0^t e^{(C_1+K_3)s} \, ds \\
&=-t+\left[ \frac{e^{(C_1+K_3)s}}{C_1+K_3} \right]_0^t \\
&=-t+ \frac{e^{(C_1+K_3)t}-1}{C_1+K_3},
\end{align*}
and therefore, we obtain
\[
|\eta(t)-\eta(0)|
\leqq \left( \frac{C_1\beta}{C_1+K_3}+|\eta(t)| \right) (e^{(C_1+K_3)t}-1),
\]
which is the first estimate in \eqref{eq:estDu1}
since $\eta(t)=D_x u(x,t)$.

Let $\sigma \in (0,t]$.
Integrating both the sides of \eqref{eq:LHsys}(b) over $[0,\sigma]$ and then taking $\sigma=t$,
we derive the second estimate in \eqref{eq:estDu1} in a similar way.
\end{proof}

We also use some results obtained in \cite{HH.23}.
These results are true in our settings.

\begin{prop}[{\cite[Proposition 3.2]{HH.23}}]\label{prop:estatRu2}
Assume {\Hx}--{\Hr} and {\Hsc}.
Let $u \in C(\mathbb{R}^n \times [0,T))$ be a viscosity solution of \eqref{eq:HJu}--\eqref{eq:ID}.
Let $(x,t) \in \mathcal{R}(u)$
and $(\xi,\eta,u_{\xi})$ be a solution of \eqref{eq:LHsys}--\eqref{eq:xeuTC}.
Then 
\[
\begin{cases}
|x-\xi(0)|
\leqq \left( \dfrac{B_2}{A_2}+|x| \right)(e^{A_2t}-1)
& \mbox{if $A_2>0$,} \\
|x-\xi(0)|
\leqq B_2 t
& \mbox{if $A_2=0$,}
\end{cases}
\]
where $A_2$ and $B_2$ are the constants in {\Hp}.
\end{prop}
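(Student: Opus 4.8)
The plan is to read off a pointwise bound for the velocity $\xi'$ directly from the Lie equation and assumption \Hp, and then to close the resulting integral inequality by a Gronwall argument run backwards in time. Because the velocity bound uses only the Lipschitz constant of $H$ in the momentum variable --- which is untouched by the new dependence of $H$ on $u$ --- the argument is essentially that of \cite[Proposition 3.2]{HH.23}.

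First I would recall that, since $(x,t) \in \mathcal{R}(u)$, the solution $(\xi,\eta,u_{\xi})$ of \eqref{eq:LHsys}--\eqref{eq:xeuTC} is the lift of a minimizer $\xi \in \mathcal{C}_{\min}(x,t)$ (see \cref{thm:ACS20a}), so in particular $\xi \in C^2([0,t])$ and the whole triple is defined on all of $[0,t]$. From \eqref{eq:LHsys}(a) we have $\xi'(s) = D_p H(\xi(s),s,u_{\xi}(s),\eta(s))$. Assumption \Hp states that $q \mapsto H(z,s,v,q)$ is Lipschitz with constant $A_2|z| + B_2$; differentiating this estimate in the momentum variable (legitimate by \Hr) gives $|D_p H(z,s,v,p)| \leqq A_2|z| + B_2$ for all $(z,s,v,p)$, hence
\[
|\xi'(s)| \leqq A_2|\xi(s)| + B_2 \qquad (s \in [0,t]).
\]

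Next, set $h(\tau) := |x - \xi(\tau)| = |\xi(t) - \xi(\tau)|$ for $\tau \in [0,t]$. Integrating $\xi'$ over $[\tau,t]$ and using $|\xi(s)| \leqq |x| + h(s)$ yields
\[
h(\tau) \leqq \int_{\tau}^{t} \bigl(A_2|\xi(s)| + B_2\bigr)\,ds \leqq (A_2|x| + B_2)(t-\tau) + A_2 \int_{\tau}^{t} h(s)\,ds .
\]
If $A_2 = 0$ this immediately gives $|x - \xi(0)| = h(0) \leqq B_2 t$. If $A_2 > 0$, I would reverse time by setting $k(r) := h(t-r)$, which turns the inequality into the forward form $k(r) \leqq (A_2|x| + B_2)\,r + A_2 \int_0^r k(\sigma)\,d\sigma$, and then compare $k$ with the solution $y(r) = \tfrac{A_2|x|+B_2}{A_2}\bigl(e^{A_2 r} - 1\bigr)$ of the linear ODE $y' = (A_2|x| + B_2) + A_2 y$, $y(0) = 0$. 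Concretely, writing $Z(r) := \int_0^r k$, one has $Z' - A_2 Z \leqq (A_2|x|+B_2)r$, so $(e^{-A_2 r} Z)' \leqq (A_2|x|+B_2)\,r\,e^{-A_2 r}$; integrating and substituting back into the inequality for $k$ produces $k(r) \leqq y(r)$. Evaluating at $r = t$ gives $|x - \xi(0)| = k(t) \leqq \bigl(\tfrac{B_2}{A_2} + |x|\bigr)\bigl(e^{A_2 t} - 1\bigr)$.

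The only point requiring care is the sharpness of the Gronwall step: the crude bound $k(r) \leqq (A_2|x|+B_2)\,r\,e^{A_2 r}$ is strictly weaker than the asserted $\tfrac{A_2|x|+B_2}{A_2}(e^{A_2 r}-1)$, so one must use the refined comparison with the linear inhomogeneous ODE rather than a one-step application of the classical lemma. Everything else --- existence and $C^2$-regularity of $\xi$ on $[0,t]$, and the velocity bound --- comes from the material of \cref{sec:pre} and is insensitive to the $u$-dependence of $H$, so the proof carries over essentially verbatim from \cite{HH.23}.
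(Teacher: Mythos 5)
Your proof is correct. Note, however, that the paper does not give its own proof of this proposition: it is stated with the citation \cite[Proposition 3.2]{HH.23} and imported as a known result, the point being (as you correctly observe at the start) that the velocity equation $\xi' = D_pH$ and the \Hp-bound $|D_pH(z,s,v,p)|\leqq A_2|z|+B_2$ do not see the $u$-dependence at all, so the argument from \cite{HH.23} transfers without change. Your write-up is a faithful reconstruction of that argument: the integral inequality $h(\tau)\leqq (A_2|x|+B_2)(t-\tau)+A_2\int_\tau^t h\,ds$ is set up correctly, and your refined Gronwall step (comparing $k(r)=h(t-r)$ with the exact solution $y(r)=\tfrac{A_2|x|+B_2}{A_2}(e^{A_2r}-1)$ of the linear ODE rather than applying the crude exponential bound) is exactly what is needed to land on $(\tfrac{B_2}{A_2}+|x|)(e^{A_2t}-1)$ instead of the weaker $(A_2|x|+B_2)\,t\,e^{A_2t}$. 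The $A_2=0$ case is also handled correctly. I have nothing to add.
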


\begin{lem}[{\cite[Lemma 3.5]{HH.23}}]\label{lem:estball}
Assume {\Hx}--{\Hr} and {\Hsc}.
Let $(x,t) \in \mathbb{R}^n \times (0,T)$
and $\xi \in \mathcal{C}_{\min}(x,t)$.
Then, 
\[
B_{R(\xi(s),s)}(\xi(s)) \subset B_{R(x,t)}(x)
\quad (s \in (0,t)),
\]
where $R(x,t)$ is defined in \eqref{eq:defiR}.
\end{lem}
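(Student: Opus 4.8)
The plan is to reduce the stated ball inclusion to a single scalar inequality via the triangle inequality, and then to establish that inequality by the backward Gronwall estimate for the minimizing curve $\xi$ that already underlies \cref{prop:estatRu2}. Fix $s \in (0,t)$. If $y \in B_{R(\xi(s),s)}(\xi(s))$, then $|y-x| \leqq |y-\xi(s)| + |\xi(s)-x| < R(\xi(s),s) + |\xi(s)-x|$, so it suffices to prove
\[
R(\xi(s),s) + |\xi(s)-x| \leqq R(x,t);
\]
because the ball on the left is open, this forces $|y-x| < R(x,t)$, i.e. $y \in B_{R(x,t)}(x)$. Thus everything reduces to two bounds: a bound on the distance $|\xi(t)-\xi(s)|$ travelled on $[s,t]$, and a bound on $R(\xi(s),s)$, the latter requiring an a priori estimate for $|\xi(s)|$.

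Both follow from the velocity bound for the minimizer. Since $\xi \in \mathcal{C}_{\min}(x,t)$, it is $C^2$ on $[0,t]$ and solves \eqref{eq:LHsys}(a); because $H \in C^2$ by {\Hr} and, by {\Hp}, the map $p \mapsto H(z,\sigma,u,p)$ is Lipschitz with constant $A_2|z|+B_2$, we get $|\xi'(\sigma)| \leqq A_2|\xi(\sigma)|+B_2$ for all $\sigma \in [0,t]$. Integrating this backward from the terminal value $\xi(t)=x$ and applying Gronwall's lemma --- exactly the computation in the proof of \cref{prop:estatRu2}, but with lower endpoint $s$ in place of $0$ --- gives, when $A_2>0$, the two estimates $\frac{B_2}{A_2}+|\xi(\sigma)| \leqq (\frac{B_2}{A_2}+|x|)e^{A_2(t-\sigma)}$ for $\sigma \in [0,t]$ and $|\xi(s)-x| = |\xi(t)-\xi(s)| \leqq (\frac{B_2}{A_2}+|x|)(e^{A_2(t-s)}-1) = R(x,t-s)$. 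Substituting the first estimate at $\sigma=s$ into the definition of $R$ yields $R(\xi(s),s) \leqq (\frac{B_2}{A_2}+|x|)(e^{A_2 t}-e^{A_2(t-s)}) = R(x,t)-R(x,t-s)$, and adding this to $|\xi(s)-x| \leqq R(x,t-s)$ gives precisely the desired inequality. The case $A_2=0$ is the same but easier: $|\xi'| \leqq B_2$ gives $|\xi(s)-x| \leqq B_2(t-s)$ and $R(\xi(s),s)=B_2 s$, which sum to $B_2 t = R(x,t)$.

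I do not expect a genuine obstacle here; the substance is entirely \cref{prop:estatRu2}-type Gronwall bookkeeping. The points to watch are that the Lipschitz bound {\Hp} passes to a pointwise bound on $|D_p H|$ (which uses the $C^2$ regularity of {\Hr}, together with {\Hsc}, so that $\xi$ is an honest $C^2$ solution of the Lie equation), the split between $A_2>0$ and $A_2=0$, and the algebraic identity $R(x,t) = R(x,t-s) + (\frac{B_2}{A_2}+|x|)(e^{A_2 t}-e^{A_2(t-s)})$, which is exactly what makes the two partial estimates add up to $R(x,t)$ and not to something larger.
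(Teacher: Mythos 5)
Your proof is correct, and since the paper simply cites \cite[Lemma 3.5]{HH.23} without reproducing its argument, the natural comparison is with the Gronwall estimate underlying \cref{prop:estatRu2} (also cited from \cite{HH.23}) --- which is exactly what you use. The reduction via the triangle inequality to the scalar bound $R(\xi(s),s)+|\xi(s)-x|\leqq R(x,t)$, the velocity bound $|\xi'|\leqq A_2|\xi|+B_2$ from \Hp{} together with the $C^2$ regularity of the minimizer, the two backward Gronwall estimates, and the telescoping identity $(\frac{B_2}{A_2}+|x|)(e^{A_2 t}-e^{A_2(t-s)})+R(x,t-s)=R(x,t)$ are all correct and are essentially the expected route.
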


By \cref{prop:estLHsyssol}, \cref{prop:estRu1}, \cref{prop:estatRu2} and \cref{lem:estball},
we obtain the following theorem under the additional assumptions {\Hr} and {\Hsc}.

\begin{thm}\label{thm:sHumain}
Assume {\Hx}--{\Hr} and {\Hsc}.
Let $u \in C(\mathbb{R}^n \times [0,T))$ be a viscosity solution of \eqref{eq:HJu}--\eqref{eq:ID}.
Then
\begin{enumerate}
\item 
if $(C_1,K_3) \neq (0,0)$, 
\[
\begin{split}
&\phantom{\leqq} I(x,t;u_0)e^{-(C_1+K_3)t}-\frac{C_1\beta}{C_1+K_3}(1-e^{-(C_1+K_3)t}) \\
&\leqq |p| \\
&\leqq S(x,t;u_0)e^{(C_1+K_3)t}+\frac{C_1\beta}{C_1+K_3}(e^{(C_1+K_3)t}-1)
\end{split}
\]
for all $(x,t) \in \mathbb{R}^n \times (0,T)$ and $p \in D^-_x u(x,t)$,
where 
\begin{align*}
S(x,t;u_0)
&:=\sup \left\{ |p| \relmiddle| 
p \in D^- u_0(y), \, y \in \overline{B_{R(x,t)}(x)} \right\}, \\
I(x,t;u_0)
&:=\inf \left\{ |p| \relmiddle| 
p \in D^- u_0(y), \, y \in \overline{B_{R(x,t)}(x)} \right\}.
\end{align*}

\item 
if $(C_1,K_3)=(0,0)$, 
\[
I(x,t;u_0)
\leqq |p|
\leqq S(x,t;u_0)
\]
for all $(x,t) \in \mathbb{R}^n \times (0,T)$ and $p \in D^-_x u(x,t)$.
\end{enumerate}
\end{thm}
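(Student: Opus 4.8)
The plan is to reduce everything to differentiability points of $u$ and then simply chain together the variational and ODE estimates recorded in \cref{sec:pre} and in \cref{prop:estRu1}. First I would observe that under {\Hx}--{\Hr} and {\Hsc} the viscosity solution of \eqref{eq:HJu}--\eqref{eq:ID} is unique (comparison is available since $H$ is Lipschitz in $u$ by {\Hu}), hence coincides with the variational solution \eqref{eq:uvf}; in particular $u$ is Lipschitz continuous and locally semiconcave on $\mathbb{R}^n \times (0,T)$. Now fix $(x,t) \in \mathbb{R}^n \times (0,T)$ and $p \in D^-_x u(x,t)$. Then $D^- u(x,t) \neq \emptyset$, so applying \cref{prop:DpmSC}(1) on a small convex neighborhood of $(x,t)$ shows $u$ is differentiable at $(x,t)$; consequently $(x,t) \in \mathcal{R}(u)$ and $p = D_x u(x,t)$. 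It therefore suffices to prove the asserted two-sided bound for $|D_x u(x,t)|$ at an arbitrary point of $\mathcal{R}(u)$.

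Next I would invoke the characteristics. By \cref{prop:DpmSC}(2) we have $D^* u(x,t) = \{Du(x,t)\}$, so $(D_x u(x,t), u_t(x,t)) \in D^* u(x,t)$, and \cref{thm:ACS20a} then produces the solution $(\xi,\eta,u_\xi)$ of \eqref{eq:LHsys} with the terminal condition \eqref{eq:xeuTC} and, crucially, guarantees $\xi \in \mathcal{C}_{\min}(x,t)$. From \cref{prop:estLHsyssol} we get $\eta(0) \in D^- u_0(\xi(0))$, and from \cref{prop:estatRu2} we get $|x - \xi(0)| \leq R(x,t)$, that is, $\xi(0) \in \overline{B_{R(x,t)}(x)}$ (here \cref{lem:estball} provides the nesting of the balls $B_{R(\xi(s),s)}(\xi(s))$ along the minimizer, which is what makes $R(x,t)$ the correct radius along the whole trajectory). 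Combining these two facts with the definitions of $I(x,t;u_0)$ and $S(x,t;u_0)$ gives $I(x,t;u_0) \leq |\eta(0)| \leq S(x,t;u_0)$.

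Finally I would feed this into \cref{prop:estRu1}. If $(C_1,K_3) \neq (0,0)$, the chain \eqref{eq:estDu2} bounds $|D_x u(x,t)|$ below by $|\eta(0)| e^{-(C_1+K_3)t} - \frac{C_1\beta}{C_1+K_3}(1 - e^{-(C_1+K_3)t})$ and above by $|\eta(0)| e^{(C_1+K_3)t} + \frac{C_1\beta}{C_1+K_3}(e^{(C_1+K_3)t}-1)$; since both expressions are nondecreasing in $|\eta(0)|$, substituting $|\eta(0)| \geq I(x,t;u_0)$ into the lower one and $|\eta(0)| \leq S(x,t;u_0)$ into the upper one yields statement (1). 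If $(C_1,K_3) = (0,0)$, then \eqref{eq:estDuck} gives $D_x u(x,t) = \eta(0)$, so $|p| = |\eta(0)| \in [\,I(x,t;u_0), S(x,t;u_0)\,]$, which is statement (2).

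The computational content is essentially nil once \cref{prop:estRu1} is in hand, so the only genuine points requiring care are: (i) the passage from ``$p \in D^-_x u(x,t)$'' to ``$u$ is differentiable at $(x,t)$ with $p = D_x u(x,t)$'', which rests on local semiconcavity and hence on identifying the given viscosity solution with the variational solution \eqref{eq:uvf}; and (ii) ensuring that the solution of \eqref{eq:LHsys}--\eqref{eq:xeuTC} supplied by \cref{thm:ACS20a} is exactly the object to which \cref{prop:estLHsyssol}, \cref{prop:estatRu2}, and \cref{prop:estRu1} apply, i.e. that $\xi \in \mathcal{C}_{\min}(x,t)$ — which is precisely the content of the bijection in \cref{thm:ACS20a}. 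I expect (i) to be the main thing to watch, since the theorem is phrased for an arbitrary viscosity solution rather than for the variational one.
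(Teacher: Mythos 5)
Your argument is correct and reconstructs exactly the proof the paper omits (the paper only says ``The proof is similar to [HH.23, Proof of Theorem 3.6(1)], so we omit it''). The chain semiconcavity $\Rightarrow$ differentiability at $(x,t)$ via \cref{prop:DpmSC}(1), then $D^*u(x,t)=\{Du(x,t)\}$ via \cref{prop:DpmSC}(2), then a backward characteristic with $\xi \in \mathcal{C}_{\min}(x,t)$ via \cref{thm:ACS20a}, then $\eta(0)\in D^-u_0(\xi(0))$ and $|\,x-\xi(0)|\leqq R(x,t)$ from \cref{prop:estLHsyssol} and \cref{prop:estatRu2}, and finally the ODE bound \eqref{eq:estDu2}/\eqref{eq:estDuck} from \cref{prop:estRu1}, is precisely the intended route. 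Your monotonicity observation for substituting $I(x,t;u_0)\leqq|\eta(0)|\leqq S(x,t;u_0)$ is the small finishing step, and the $(C_1,K_3)=(0,0)$ case via \eqref{eq:estDuck} is handled correctly. (The invocation of \cref{lem:estball} is not actually needed here --- \cref{prop:estatRu2} already yields $\xi(0)\in\overline{B_{R(x,t)}(x)}$ --- though the paper lists it among its ingredients too, so you are following its lead.)

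The one point that deserves emphasis is the one you yourself flag: the identification of the given viscosity solution with the variational solution \eqref{eq:uvf}. The paper states explicitly that it does \emph{not} assume the usual hypotheses under which uniqueness is proved, yet \cref{thm:ACS20a} and \cref{prop:estLHsyssol} are phrased via $\mathcal{C}_{\min}(x,t)$, which is defined by the variational formula; they only genuinely say something about the variational solution. Your resolution --- that {\Hu} together with {\Hx}--{\Hp} and Lipschitz $u_0$ still gives comparison (e.g.\ through an exponential weight $e^{-\lambda t}u$ with $\lambda>K_3$ to restore monotonicity in the unknown), hence uniqueness, hence the identification --- is the right one, and it is indeed a step the paper passes over silently. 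Stating it, as you do, is an improvement rather than a detour.
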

The proof is similar to \cite[Proof of Theorem 3.6(1)]{HH.23}, 
so we omit it.

\subsection{Proof of \cref{thm:ourlb}}

In this subsection, we prove \cref{thm:ourlb}.
To do this, we approximate $H$ by $H_{\varepsilon}$ satisfying {\Hr} and {\Hsc}.
For more details, see \cite[Section 2.3]{HH.23}.

First, we extend $H$ as
\[
H(x,t,u,p)
=\begin{cases}
H(x,0,u,p) & (t<0), \\
H(x,T,u,p) & (t>T).
\end{cases}
\]
Let $\varepsilon \in (0,1]$.
We define $H_{\varepsilon}: \mathbb{R}^n \times \mathbb{R} \times \mathbb{R} \times \mathbb{R}^n \to \mathbb{R}$ by
\begin{equation}
H_{\varepsilon}(x,t,u,p)
=(H*\rho_{\varepsilon})(x,t,u,p)+\varepsilon \sqrt{|p|^2+1},
\label{eq:defHe}
\end{equation}
where $\rho_{\varepsilon}$ is the standard Friedrichs mollifier and 
\[
(H*\rho_{\varepsilon})(x,t,u,p)
=(H*\rho_{\varepsilon})(z)
:=\int_{B_{\varepsilon}(0)} H(z-w)\rho_{\varepsilon}(w) \, dw.
\]
Then $H_{\varepsilon}$ satisfies {\Hr} and {\Hsc}.
Furthermore, {\Hx} and {\Hu} respectively hold for $H_{\varepsilon}$ with the same constant $C_1$ and $K_3$.
{\Hp} also holds for $H_{\varepsilon}$, 
but the Lipschitz constant must be replaced by $A_2|x|+B_2+\varepsilon$.
For this reason, we prepare the following notations
\[
R_{\varepsilon}(x,t):=\begin{cases} 
\left( \dfrac{B_2+\varepsilon}{A_2}+|x| \right)(e^{A_2t}-1) & \mbox{if $A_2>0$,} 
\smallskip \\ 
(B_2+\varepsilon) t & \mbox{if $A_2=0$} 
\end{cases} 
\]
and 
\begin{align*}
S_{\varepsilon}(x,t;u_0)
&:=\sup \left\{ |p| \relmiddle| 
p \in D^- u_0(y), \, y \in \overline{B_{R_{\varepsilon}(x,t)}(x)} \right\}, \\
I_{\varepsilon}(x,t;u_0)
&:=\inf \left\{ |p| \relmiddle| 
p \in D^- u_0(y), \, y \in \overline{B_{R_{\varepsilon}(x,t)}(x)} \right\}.
\end{align*}

\begin{proof}[Proof of \cref{thm:ourlb}]
Let $\varepsilon \in (0,1]$ and $H_{\varepsilon}$ defined in \eqref{eq:defHe}.
Let $u_{\varepsilon}$ be a viscosity solution of \eqref{eq:HJuap}--\eqref{eq:ID}.
By the standard stability results for viscosity solutions (see \cite[Section 6]{CIL.92}),
$u_{\varepsilon}$ converges to $u$ locally uniformly in $\mathbb{R}^n \times [0,T)$ as $\varepsilon \to +0$.

Let $p \in D_x^- u(x,t)$.
By \cite[Lemma II.2.4]{BC.97}, 
there exist 
$\{ (x_{\varepsilon},t_{\varepsilon}) \}_{\varepsilon \in (0,1]} \subset \mathbb{R}^n \times (0,T)$ and 
$\{ p_{\varepsilon} \}_{\varepsilon \in (0,1]} \subset \mathbb{R}^n$
such that 
\[
p_{\varepsilon} \in D_x^- u_{\varepsilon}(x_{\varepsilon},t_{\varepsilon}), \quad 
\lim_{\varepsilon \to 0} (x_{\varepsilon},t_{\varepsilon},p_{\varepsilon})=(x,t,p).
\]
We now apply \cref{thm:sHumain} to obtain 
\[
\begin{split}
&\phantom{\leqq} I_{\varepsilon}(x_{\varepsilon},t_{\varepsilon};u_0)e^{-(C_1+K_3)t_{\varepsilon}}-\frac{C_1\beta}{C_1+K_3}(1-e^{-(C_1+K_3)t_{\varepsilon}}) \\
&\leqq |p| \\
&\leqq S_{\varepsilon}(x_{\varepsilon},t_{\varepsilon};u_0)e^{(C_1+K_3)t_{\varepsilon}}+\frac{C_1\beta}{C_1+K_3}(e^{(C_1+K_3)t_{\varepsilon}}-1).
\end{split}
\]
For an arbitrary $\delta>0$,
we have $R_{\varepsilon}(x_{\varepsilon},t_{\varepsilon}) < R(x,t)+\delta$ if $\varepsilon$ small enough and obtain
\begin{align*}
&\phantom{\leqq} \inf \left\{ |q| \relmiddle| 
q \in D^- u_0(y),~y \in \overline{B_{R(x,t)+\delta}(x)} \right\} \cdot e^{-(C_1+K_3)t_{\varepsilon}} \\
&\hspace{5cm} -\frac{C_1\beta}{C_1+K_3}(1-e^{-(C_1+K_3)t_{\varepsilon}}) \\
&\leqq |p_{\varepsilon}| \\
&\leqq \sup \left\{ |q| \relmiddle| 
q \in D^- u_0(y),~y \in \overline{B_{R(x,t)+\delta}(x)} \right\} \cdot e^{(C_1+K_3)t_{\varepsilon}} \\
&\hspace{5cm} +\frac{C_1\beta}{C_1+K_3}(1-e^{(C_1+K_3)t_{\varepsilon}}).
\end{align*}
We obtain the desired inequalities by letting $\varepsilon \to +0$ and $\delta \to +0$.
\end{proof}

\section{Comparison with two approach}\label{sec:cp}

In this section,
we compare \cref{thm:leylb} with \cref{thm:ourleylb} under the assumption {\Uo}.
First, the domain of dependence $\mathcal{E}(x_0,r)$, where gradient estimates are available, is the same.
Although the lower bound is the same if $(C_1, K_3)=(0,0)$,
the lower bound is generally not the same.
We find out which the lower bound is larger.

In what follows, we only consider the case of $(C_1, K_3) \neq (0,0)$.
We introduce some notations.

\begin{defi}\label{defi:lb}
Let $\theta>0$ and assume $(C_1, K_3) \neq (0,0)$.
We define 
\begin{align*}
l(t)&:=\sqrt{\theta^2e^{-((\frac{\beta}{2}+2)C_1+2K_3)t}-2C_1\beta t}, \\
L(t)&:=\theta e^{-(C_1+K_3)t}-\frac{C_1\beta}{C_1+K_3}(1-e^{-(C_1+K_3)t}),
\end{align*}
where $\beta$ and $C_1$ are the constants in {\Hx} and $K_3$ is a constant in {\Hu}.
Moreover,
if $\beta=1$, then 
we define $t_l,t_L>0$ as 
\[
t_l:=\inf \{ t \in [0,\infty) \mid l(t)=0 \}, \quad
t_L:=\inf \{ t \in [0,\infty) \mid L(t)=0 \}.
\]
\end{defi}

The next theorem shows that 
\cref{thm:ourleylb} is better result than \cref{thm:leylb}.

\begin{thm}\label{thm:complb}
Let $\theta>0$ and assume $(C_1, K_3) \neq (0,0)$.
\begin{enumerate}
\item 
If $\beta=0$, then $l(t)=L(t)$ in $[0,\infty)$.

\item 
If $\beta=1$, then $l(0)=L(0)$ and $l(t)<L(t)$ in $(0,t_l]$
\end{enumerate}
\end{thm}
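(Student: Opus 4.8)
The plan is to compare the two lower bounds by a direct analysis of the functions $l$ and $L$ from \cref{defi:lb}. Part (1) is immediate: when $\beta=0$ both definitions reduce to $\theta e^{-(C_1+K_3)t}$, since $l(t)=\sqrt{\theta^2 e^{-2(C_1+K_3)t}}=\theta e^{-(C_1+K_3)t}=L(t)$. For part (2) the plan is to compare $l(t)^2$ with $L(t)^2$ on the interval where both are nonnegative, i.e.\ on $(0,t_l]$ (here one should first note $t_l\le t_L$, which follows once the inequality $l\le L$ is established, or can be checked separately). So the core of the argument is the inequality
\[
\theta^2 e^{-((\tfrac{\beta}{2}+2)C_1+2K_3)t}-2C_1 t
\;<\;
\left(\theta e^{-(C_1+K_3)t}-\frac{C_1}{C_1+K_3}\bigl(1-e^{-(C_1+K_3)t}\bigr)\right)^2
\]
for $t\in(0,t_l]$, with equality at $t=0$ (both sides equal $\theta^2$).

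First I would set $a:=C_1+K_3>0$ and expand the right-hand side:
\[
L(t)^2 = \theta^2 e^{-2at} - \frac{2C_1\theta}{a}e^{-at}\bigl(1-e^{-at}\bigr) + \frac{C_1^2}{a^2}\bigl(1-e^{-at}\bigr)^2.
\]
Subtracting, the claim becomes
\[
\theta^2\Bigl(e^{-2at}-e^{-((\tfrac{\beta}{2}+2)C_1+2K_3)t}\Bigr) + 2C_1 t - \frac{2C_1\theta}{a}e^{-at}\bigl(1-e^{-at}\bigr) + \frac{C_1^2}{a^2}\bigl(1-e^{-at}\bigr)^2 > 0.
\]
With $\beta=1$ the first exponent is $(\tfrac52 C_1+2K_3)t = (2a+\tfrac12 C_1)t$, so $e^{-2at}-e^{-(2a+\frac12 C_1)t}=e^{-2at}(1-e^{-\frac12 C_1 t})\ge 0$, i.e.\ the $\theta^2$-term is nonnegative and vanishes only when $C_1=0$. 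I would then handle the remaining $C_1$-terms: collecting them, $2C_1 t - \tfrac{2C_1\theta}{a}e^{-at}(1-e^{-at}) + \tfrac{C_1^2}{a^2}(1-e^{-at})^2$; one checks this is strictly positive for $t>0$ by bounding $\tfrac{\theta}{a}e^{-at}(1-e^{-at})$ — note $1-e^{-at}\le at$ and $e^{-at}\le 1$, so $\tfrac{\theta}{a}e^{-at}(1-e^{-at})\le \theta t$; this alone is not quite enough, so I would instead differentiate or use the sharper bound $\tfrac{1}{a}(1-e^{-at})\le t$ together with a lower-order Taylor comparison, reducing to showing $2C_1 t > 2C_1\theta t \cdot(\text{something}<1)$, or simply treating the whole expression as a function vanishing to order $\ge 2$ at $0$ with positive leading coefficient and checking it has no zero in the relevant range. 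The cleanest route is probably: define $g(t)$ to be the full difference $L(t)^2-l(t)^2$, show $g(0)=0$, $g'(0)=0$ (the first-order terms cancel), and $g''(0)>0$, then argue $g'>0$ on $(0,t_l]$ by a sign analysis of $g'$, which is a sum of a manifestly nonnegative $\theta^2$-piece and a $C_1$-piece one shows is nonnegative.

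The main obstacle I expect is the bookkeeping in the $C_1$-dependent terms: unlike the $\beta=0$ case, one cannot factor things cleanly, and the inequality is genuinely an inequality (not an identity), so one must produce an honest estimate rather than an algebraic cancellation. A secondary subtlety is making sure the comparison is carried out only where $l(t)$ is real and nonnegative — hence the restriction to $(0,t_l]$ — and confirming that $t_l\le t_L$ so that $L$ is still nonnegative there (otherwise comparing squares would be meaningless). I would dispose of that by observing that $l(t)\le L(t)$ on $[0,t_l]$ forces $L(t_l)\ge l(t_l)=0$, hence $t_l\le t_L$, so the argument is self-consistent once the squared inequality is in hand.
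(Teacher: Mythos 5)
Your part (1) is correct and matches the paper. Your part (2), however, has a genuine gap: the difference $F(t)=L(t)^2-l(t)^2$ contains a term \emph{linear} in $\theta$, namely $-\frac{2C_1\theta}{C_1+K_3}e^{-(C_1+K_3)t}\bigl(1-e^{-(C_1+K_3)t}\bigr)$, and this negative contribution cannot be disposed of by the decomposition you propose. If you separate the manifestly nonnegative $\theta^2 e^{-2(C_1+K_3)t}(1-e^{-\frac12 C_1 t})$ piece and try to show the remaining ``$C_1$-terms'' are nonnegative on their own, you are stuck exactly where you already noticed: the bound $\frac{1}{a}e^{-at}(1-e^{-at})\le t$ only gives $2C_1 t-2C_1\theta t+\dots$, which is negative for $\theta>1$. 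Your proposed repair via a sign analysis of $g'$ fails for the same structural reason, and in addition the Taylor-expansion claim is wrong: a direct computation gives $g'(0)=\tfrac{C_1}{2}(\theta-2)^2$, not $0$, and the $\theta^2$-coefficient in $g'(t)$, namely $e^{-2at}\bigl(\tfrac{C_1}{2}e^{-\frac12 C_1 t}-2a(1-e^{-\frac12 C_1 t})\bigr)$ with $a=C_1+K_3$, changes sign in $t$, so it is not ``manifestly nonnegative.'' In short, any split that keeps the $\theta$-linear term together with the $\theta$-independent terms will not be sign-definite uniformly in $\theta$.

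The missing idea is to \emph{complete the square in $\theta$} before estimating anything. Writing $F(t)=A\theta^2-2B\theta + C$ with $A=e^{-2(C_1+K_3)t}(1-e^{-\frac12 C_1 t})$, $B=\frac{C_1}{C_1+K_3}e^{-(C_1+K_3)t}(1-e^{-(C_1+K_3)t})$, and $C=\frac{C_1^2}{(C_1+K_3)^2}(1-e^{-(C_1+K_3)t})^2+2C_1 t$, one has $F(t)=A(\theta-B/A)^2-B^2/A+C\ge -B^2/A+C=:G(t)$, and $G$ is \emph{independent of} $\theta$. The paper then shows $(1-e^{-\frac12 C_1 t})G(t)>0$ by two elementary inequalities, $1-e^{-x}<x$ and $X+1<e^{X}$ for $X\neq 0$. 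That completion of the square is what absorbs the troublesome $\theta$-linear term, and your argument does not contain a substitute for it; as written, it would not close.
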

\begin{proof}
(1) 
It is obvious since
\[
l(t)=L(t)=\theta e^{-(C_1+K_3)t}.
\]

\noindent
(2)
When $\beta=1$, we have
\begin{align*}
l(t)&=\sqrt{\theta^2e^{-(\frac{5}{2}C_1+2K_3)t}-2C_1t}, \\
L(t)&=\theta e^{-(C_1+K_3)t}-\frac{C_1}{C_1+K_3}(1-e^{-(C_1+K_3)t}).
\end{align*}
It suffices to prove $\{ l(t) \}^2<\{ L(t) \}^2$ in $(0,t_l]$ since $l(t) \geqq 0$ in $(0,t_l]$.
We set
\begin{align*}
F(t)
&:=\{ L(t) \}^2-\{ l(t) \}^2.
\end{align*}
We calculate
\begin{align*}
\{ l(t) \}^2&=\theta^2e^{-(\frac{5}{2}C_1+2K_3)t}-2C_1t, \\
\{ L(t) \}^2&=\theta^2 e^{-2(C_1+K_3)t}-\frac{2C_1\theta}{C_1+K_3}e^{-(C_1+K_3)t}(1-e^{-(C_1+K_3)t}) \\
&\hspace{4cm}
 +\frac{C_1^2}{(C_1+K_3)^2}(1-e^{-(C_1+K_3)t})^2,
\end{align*}
and therefore
\begin{align*}
F(t)
&=\theta^2 e^{-2(C_1+K_3)t}(1-e^{-\frac{1}{2}C_1t}) \\
&\hspace{1cm} -\frac{2C_1\theta}{C_1+K_3}e^{-(C_1+K_3)t}(1-e^{-(C_1+K_3)t}) \\
&\hspace{2cm} +\frac{C_1^2}{(C_1+K_3)^2}(1-e^{-(C_1+K_3)t})^2+2C_1t \\
&=e^{-2(C_1+K_3)t}(1-e^{-\frac{1}{2}C_1t}) \left\{ 
\theta-\frac{C_1}{C_1+K_3}e^{(C_1+K_3)t}\frac{1-e^{-(C_1+K_3)t}}{1-e^{-\frac{1}{2}C_1t}} \right\}^2 \\
&\hspace{1cm} -\frac{C_1^2}{(C_1+K_3)^2}\frac{(1-e^{-(C_1+K_3)t})^2}{1-e^{-\frac{1}{2}C_1t}} \\
&\hspace{2cm} +\frac{C_1^2}{(C_1+K_3)^2}(1-e^{-(C_1+K_3)t})^2+2C_1t \\
&\geqq  
-\frac{C_1^2}{(C_1+K_3)^2} \cdot \frac{e^{-\frac{1}{2}C_1t}(1-e^{-(C_1+K_3)t})^2 }{1-e^{-\frac{1}{2}C_1t}}+2C_1t
=:G(t).
\end{align*}
Since $e^X>X+1$ ($X \neq 0$), we have
\begin{align*}
&(1-e^{-\frac{1}{2}C_1t})G(t) \\
&=-\frac{C_1^2}{(C_1+K_3)^2} \cdot e^{-\frac{1}{2}C_1t}(1-e^{-(C_1+K_3)t})^2+2C_1t(1-e^{-\frac{1}{2}C_1t}) \\
&>-\frac{C_1^2}{(C_1+K_3)^2} \cdot e^{-\frac{1}{2}C_1t}(C_1+K_3)^2t^2+2C_1t(1-e^{-\frac{1}{2}C_1t}) \\
&=2C_1t \left\{ -\left( \frac{1}{2}C_1t+1 \right)e^{-\frac{1}{2}C_1t} +1 \right\} \\
&>2C_1t (-e^{\frac{1}{2}C_1t} \cdot e^{-\frac{1}{2}C_1t} +1) \\
&=0.
\end{align*}
Since $1-e^{-\frac{1}{2}C_1t}>0$ in $(0,t_l]$,
we conclude that
\[
F(t) \geqq G(t) >0,
\]
which completes the proof.
\end{proof}

\section{Examples}\label{sec:ex}

In this section, we show some examples and explain our result is optimal in some senses. 

\subsection{Special case}

We first consider the following equation
\begin{equation}
u_t(x,t)+\lambda u(x,t)+H_0(x,t,D_x u(x,t))=0 
\quad \mathrm{in} \ \mathbb{R}^n \times (0,T).
\label{eq:geneHJ}
\end{equation}
Here is a Hamiltonian $H(x,t,u,p)=\lambda u+H_0(x,t,p)$ for $\lambda \in \mathbb{R}$ and $H_0: \mathbb{R}^n \times [0,T] \times \mathbb{R}^n \to \mathbb{R}$ satisfying {\Hx}, {\Hp}, and {\Hc}.
$H$ satisfies {\Hu} with $K_3=|\lambda|$, but \cref{thm:ourlb} may not be an optimal estimate if $\lambda<0$ (see \cref{ex:trans}).
In this case, we can obtain another gradient estimate for \eqref{eq:geneHJ} by a direct calculation.

Let $u$ be a viscosity solution of \eqref{eq:geneHJ}--\eqref{eq:ID}.
By \cite[Exercise 2.3]{BC.97},
$v(x,t):=e^{\lambda t} u(x,t)$ is a viscosity solution of
\[
v_t(x,t)+F(x,t,D_x v(x,t))=0 \quad \mathrm{in} \ \mathbb{R}^n \times (0,T),
\]
where $F(x,t,p):=e^{\lambda t} H_0(x,t,e^{-\lambda t}p)$.
If $H_0$ is positively homogeneous with respect to $p$, i.e.
\begin{equation}
H_0(x,t,\mu p) =\mu H_0(x,t,p)
\quad \mbox{for all $(x,t,p) \in \mathbb{R}^n \times [0,T] \times \mathbb{R}^n$
and $\mu \geqq 0$},
\label{eq:1-homo}
\end{equation}
then we have $F=H_0$ on $\mathbb{R}^n \times [0,T] \times \mathbb{R}^n$.
Applying \cref{thm:ourprelb}, we obtain 
\[
|p| \geqq 
\underline{I}(x,t;u_0) e^{-C_1t} -\beta (1-e^{-C_1t})
\]
for all $(x,t) \in \mathcal{E}(x_0,r)$ and $p \in D_x^- v(x,t)$.
Since $D_x^- v(x,t)=e^{\lambda t}D_x^- u(x,t)$, we have
\begin{equation}
|p| \geqq 
\underline{I}(x,t;u_0) e^{-(C_1+\lambda)t} -\beta (e^{-\lambda t}-e^{-(C_1+\lambda)t})
\label{eq:levellb}
\end{equation}
for all $(x,t) \in \mathcal{E}(x_0,r)$ and $p \in D_x^- u(x,t)$.

More generally, we have the following.
\begin{prop}\label{prop:estatRu1lam}
Assume that $H_0$ satisfies {\Hx}, {\Hp}, {\Hc}, {\Hr}, and {\Hsc}.
Let $u$ be a viscosity solution of \eqref{eq:geneHJ}--\eqref{eq:ID}.
Let $(x,t) \in \mathcal{R}(u)$ and $(\xi,\eta) \in C^1([0,t])^2$ be a solution of \eqref{eq:LHsys}--\eqref{eq:xeuTC}.
Then
\begin{equation}
\begin{cases}
\begin{aligned}
|D_x u(x,t)-e^{-\lambda t}\eta(0)|
\leqq \frac{C_1\beta}{C_1+\lambda}(e^{C_1 t}-e^{-\lambda t})+|D_x u(x,t)|(e^{C_1t}-1) \\
\mbox{if $\lambda \neq -C_1$},
\end{aligned} \\ 
|D_x u(x,t)-e^{C_1 t}\eta(0)|
\leqq C_1\beta te^{C_1 t} +|D_x u(x,t)|(e^{C_1t}-1) 
\quad \mbox{if $\lambda=-C_1$},
\end{cases} 
\label{eq:etalam1}
\end{equation}
and 
\begin{equation}
\begin{cases}
\begin{aligned}
|D_x u(x,t)-e^{-\lambda t}\eta(0)|
\leqq \frac{C_1\beta}{C_1-\lambda}(e^{(C_1-\lambda) t}-1)+|\eta(0)|(e^{(C_1-\lambda) t}-e^{-\lambda t}) \\
\mbox{if $\lambda \neq C_1$}, 
\end{aligned} \\
|D_x u(x,t)-e^{-C_1 t}\eta(0)|
\leqq C_1\beta t +|\eta(0)|(1-e^{-C_1 t}) 
\quad \mbox{if $\lambda=C_1$}, 
\end{cases}
\label{eq:etalam2}
\end{equation}
where $C_1$ and $\beta$ are the constants in {\Hx}.
Moreover,
\begin{align*}
|D_x u(x,t)| &\geqq |\eta(0)|e^{-(C_1+\lambda)t}-\frac{C_1\beta}{C_1+\lambda}(1-e^{-(C_1+\lambda)t}) \quad \mbox{if $\lambda \neq -C_1$}, \\
|D_x u(x,t)| &\geqq |\eta(0)|-C_1\beta t \quad \mbox{if $\lambda=-C_1$}, \\
|D_x u(x,t)| &\leqq |\eta(0)|e^{(C_1-\lambda)t}+\frac{C_1\beta}{C_1-\lambda}(e^{(C_1-\lambda)t}-1) \quad \mbox{if $\lambda \neq C_1$}, \\
|D_x u(x,t)| &\leqq |\eta(0)|+C_1\beta t \quad \mbox{if $\lambda=C_1$}.
\end{align*}
\end{prop}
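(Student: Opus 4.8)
The plan is to follow the same strategy as the proof of \cref{prop:estRu1}, but with the explicit structure $H(x,t,u,p)=\lambda u+H_0(x,t,p)$ exploited to solve the $\eta$-equation almost explicitly. Since $H$ has this form, equation \eqref{eq:LHsys}(b) becomes
\[
\eta'(s)=-D_x H_0(\xi(s),s,\eta(s))-\lambda \eta(s),
\]
because $D_u H(x,t,u,p)=\lambda$. The integrating factor $e^{\lambda s}$ then gives that $\zeta(s):=e^{\lambda s}\eta(s)$ satisfies $\zeta'(s)=-e^{\lambda s}D_x H_0(\xi(s),s,\eta(s))=-e^{\lambda s}D_x H_0(\xi(s),s,e^{-\lambda s}\zeta(s))$. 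First I would integrate this over $[\tau,t]$ and use {\Hx} on $H_0$ (the constants $C_1,\beta$), obtaining
\[
|\zeta(t)-\zeta(\tau)|\leqq \int_\tau^t C_1 e^{\lambda s}(\beta+|\eta(s)|)\,ds
=\int_\tau^t C_1 e^{\lambda s}\beta\,ds+\int_\tau^t C_1 |\zeta(s)|\,ds.
\]
This is now a linear Gronwall-type inequality for $|\zeta(t)-\zeta(\tau)|$ after bounding $|\zeta(s)|\leqq |\zeta(t)|+|\zeta(t)-\zeta(s)|$, exactly as in \cref{prop:estRu1}; the only new feature is the weight $e^{\lambda s}$ inside the forcing integral, which is what produces the $\frac{C_1\beta}{C_1\pm\lambda}$ factors and the degenerate $\lambda=\pm C_1$ cases.

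The two sets of estimates \eqref{eq:etalam1} and \eqref{eq:etalam2} correspond to the two choices in \cref{prop:estRu1}: integrating from $\tau=0$ up to $t$ with the forcing constant $|\zeta(t)|=e^{\lambda t}|D_x u(x,t)|$ frozen (giving bounds in terms of $|D_x u(x,t)|$, i.e. \eqref{eq:etalam1}), versus integrating from $0$ to $\sigma=t$ with the forcing frozen at $|\zeta(0)|=|\eta(0)|$ (giving bounds in terms of $|\eta(0)|$, i.e. \eqref{eq:etalam2}). In each case, after the Gronwall step I would carry out the elementary integration $\int_0^t (t-s)e^{cs}\,ds = -t/c + (e^{ct}-1)/c^2$ for $c=C_1$ (or $c=C_1-\lambda$, etc.), split off the $\lambda=\pm C_1$ case where $c=0$ and the integral is $t^2/2$, and then translate back from $\zeta$ to $\eta$ via $\eta(0)=\zeta(0)$, $\eta(t)=e^{-\lambda t}\zeta(t)$, noting $\eta(t)=D_x u(x,t)$ from \cref{prop:estLHsyssol}. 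The final block of one-sided $|D_x u(x,t)|$ estimates then follows from the triangle inequality applied to \eqref{eq:etalam1}--\eqref{eq:etalam2}, just as \eqref{eq:estDu2} follows from \eqref{eq:estDu1} in \cref{prop:estRu1}.

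The main obstacle is purely bookkeeping: carefully tracking the several weighted exponential integrals so that the degenerate resonant cases $\lambda=-C_1$ (in \eqref{eq:etalam1} and in the lower bound for $|D_x u|$) and $\lambda=C_1$ (in \eqref{eq:etalam2} and in the upper bound) come out with the correct $t$-linear terms $C_1\beta t e^{C_1 t}$, $C_1\beta t$, etc., rather than spurious $0/0$ expressions. There is no conceptual difficulty beyond \cref{prop:estRu1}; in fact one could alternatively deduce this proposition from \cref{prop:estRu1} applied to the transformed equation $v_t+F(x,t,D_x v)=0$ with $v=e^{\lambda t}u$ and $F(x,t,p)=e^{\lambda t}H_0(x,t,e^{-\lambda t}p)$, tracking how {\Hx} transforms ($F$ satisfies {\Hx} with constants $C_1 e^{\lambda t}$ and $\beta e^{\lambda t}$, which is time-dependent, so the direct Gronwall argument is cleaner and is the route I would take). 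I would therefore present the direct computation and relegate the degenerate cases to a short remark or parenthetical, since they are limits of the generic formulas.
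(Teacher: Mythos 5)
Your proposal matches the paper's proof: both introduce $\zeta(s)=e^{\lambda s}\eta(s)$ to absorb the $-\lambda\eta$ term, integrate $\zeta'$ over $[\tau,t]$ (resp.\ $[0,\sigma]$), estimate via {\Hx} and $|\zeta(s)|\leqq|\zeta(t)|+|\zeta(t)-\zeta(s)|$, apply Gronwall, and carry out the same elementary exponential integrals, with the resonant cases $\lambda=\pm C_1$ handled by the limiting value of $\int_0^t(t-s)e^{cs}\,ds$ at $c=0$. The bookkeeping sketch and the triangle-inequality derivation of the final one-sided bounds agree with the paper's argument, so this is essentially the same proof.
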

\begin{proof}
We can calculate in a similar way to \cref{prop:estRu1}, so we only prove the first inequality of \eqref{eq:etalam1}.
By \eqref{eq:LHsys}(b), we have
\begin{align*}
\eta'(s)
&=-D_x H (\xi(s),s,u_{\xi}(s),\eta(s))-D_u H (\xi(s),s,u_{\xi}(s),\eta(s))\eta(s) \\
&=-D_x H_0 (\xi(s),s,\eta(s))-\lambda \eta(s).
\end{align*}
Then $\zeta(s):=e^{\lambda s}\eta(s)$ is a solution of 
\begin{equation}
\zeta'(s)
=-e^{\lambda s} D_x H_0 (\xi(s),s,e^{-\lambda s}\zeta(s)).
\label{eq:estatRulam}
\end{equation}

Let $\tau \in [0,t)$.
Integrating both the sides of \eqref{eq:estatRulam} over $[\tau,t]$ and by {\Hx}, we have
\begin{align*}
|\zeta(t)-\zeta(\tau)|
&\leqq \int_{\tau}^t  e^{\lambda s}|D_x H_0 (\xi(s),s,e^{-\lambda s}\zeta(s))|\,ds \\
&\leqq \int_{\tau}^t  C_1e^{\lambda s} (\beta+|e^{-\lambda s}\zeta(s))|)\,ds \\
&=C_1 \int_{\tau}^t (\beta e^{\lambda s}+|\zeta(s)|)\,ds \\
&\leqq C_1 \int_{\tau}^t (\beta e^{\lambda s}+|\zeta(t)|+|\zeta(t)-\zeta(s)|) \,ds \\
&=\frac{C_1\beta}{\lambda}(e^{\lambda t}-e^{\lambda \tau})+C_1|\zeta(t)|(t-\tau)
 +C_1\int_{\tau}^t |\zeta(t)-\zeta(s)| \,ds. 
\end{align*}
Applying Gronwall's lemma and letting $\tau=0$, we have
\begin{align*}
&|\zeta(t)-\zeta(0)| \\
&\leqq \frac{C_1\beta}{\lambda}(e^{\lambda t}-1)+C_1|\zeta(t)|t \\
&\hspace{0.5cm} +e^{C_1t} \int_0^t e^{-C_1(t-s)} C_1 \left( \frac{C_1\beta}{\lambda}(e^{\lambda t}-e^{\lambda s})+C_1|\zeta(t)|(t-s) \right) \,ds \\
&=\frac{C_1\beta}{\lambda}(e^{\lambda t}-1)+C_1|\zeta(t)|t \\
&\hspace{0.5cm} 
 +\frac{C_1^2\beta}{\lambda} \int_0^t (e^{\lambda t}e^{C_1s}-e^{(C_1+\lambda)s})\,ds
 +C_1^2|\zeta(t)| \int_0^t (t-s)e^{C_1s}\,ds.
\end{align*}
If $\lambda \neq -C_1$, we compute 
\begin{align*}
&\frac{C_1^2\beta}{\lambda} \int_0^t (e^{\lambda t}e^{C_1s}-e^{(C_1+\lambda)s})\,ds \\
&=\frac{C_1\beta}{\lambda} \left(  
 e^{\lambda t} (e^{C_1 t}-1)-\frac{C_1}{C_1+\lambda}(e^{(C_1+\lambda)t}-1) \right) \\
&=\frac{C_1\beta}{\lambda}(e^{(C_1+\lambda)t}-e^{\lambda t})
 -\frac{C_1^2\beta}{\lambda(C_1+\lambda)}(e^{(C_1+\lambda)t}-1)
\end{align*}
and
\begin{align*}
&C_1^2|\zeta(t)| \int_0^t (t-s)e^{C_1s}\,ds \\
&=C_1|\zeta(t)| \left[ (t-s) e^{C_1s} \right]_0^t
 +C_1|\zeta(t)| \int_0^t e^{C_1s}\,ds \\
&=-C_1|\zeta(t)|t+|\zeta(t)|(e^{C_1t}-1).
\end{align*}
Combining them, we obtain
\begin{align*}
&|\zeta(t)-\zeta(0)| \\
&\leqq \frac{C_1\beta}{\lambda}(e^{\lambda t}-1)+C_1|\zeta(t)|t
+\frac{C_1\beta}{\lambda}(e^{(C_1+\lambda)t}-e^{\lambda t}) \\
&\hspace{0.5cm}
 -\frac{C_1^2\beta}{\lambda(C_1+\lambda)}(e^{(C_1+\lambda)t}-1)
  -C_1|\zeta(t)|t +|\zeta(t)|(e^{C_1t}-1) \\
&=\frac{C_1\beta}{C_1+\lambda}(e^{(C_1+\lambda)t}-1)+|\zeta(t)|(e^{C_1t}-1).
\end{align*}
Since $\zeta(s)=e^{\lambda s}\eta(s)$, we have
\begin{align*}
|e^{\lambda t}\eta(t)-\eta(0)|
\leqq \frac{C_1\beta}{C_1+\lambda}(e^{(C_1+\lambda)t}-1)+|e^{\lambda t}\eta(t)|(e^{C_1t}-1),
\end{align*}
which is the first estimate in \eqref{eq:etalam1} since $\eta(t)=D_x u(x,t)$.
\end{proof}

By \cref{prop:estLHsyssol}, \cref{prop:estatRu2},  \cref{lem:estball} and \cref{prop:estatRu1lam},
we obtain the following.

\begin{thm}\label{thm:sHulammain}
Assume {\Hx}, {\Hp} and {\Hc} for $H_0$.
Let $u \in C(\mathbb{R}^n \times [0,T))$ be a viscosity solution of \eqref{eq:geneHJ}--\eqref{eq:ID}.
Then
\begin{enumerate}
\item[(i)]
if $\lambda \neq -C_1$, 
\[
|p| \geqq \underline{I}(x,t;u_0)e^{-(C_1+\lambda)t}-\frac{C_1\beta}{C_1+\lambda}(1-e^{-(C_1+\lambda)t})
\]
for all $(x,t) \in \mathbb{R}^n \times (0,T)$ and $p \in D^-_x u(x,t)$.

\item[(ii)]
if $\lambda=-C_1$, 
\[
|p| \geqq \underline{I}(x,t;u_0)-C_1\beta t
\]
for all $(x,t) \in \mathbb{R}^n \times (0,T)$ and $p \in D^-_x u(x,t)$.
\end{enumerate}
\end{thm}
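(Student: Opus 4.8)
The plan is to run the dynamical-method argument of \cref{thm:ourlb} for the Hamiltonian $H(x,t,u,p)=\lambda u+H_0(x,t,p)$, but with \cref{prop:estatRu1lam} in place of \cref{prop:estRu1}; this is precisely what keeps the exponent $-(C_1+\lambda)t$ (or the linear correction when $\lambda=-C_1$) rather than the weaker $-(C_1+|\lambda|)t$ that \cref{thm:ourlb} yields through $K_3=|\lambda|$. As in \cref{sec:our} I would first establish the estimate under the extra hypotheses \Hr{} and \Hsc{} on $H_0$, and then remove them by mollification.

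\textbf{Smooth case.} Since the zeroth-order term $\lambda u$ is independent of $p$, $H$ satisfies \Hx{} and \Hc{} with the constants of $H_0$, \Hp{} with the same $A_2,B_2$, \Hu{} with $K_3=|\lambda|$, and---if $H_0$ satisfies \Hr{} and \Hsc---also \Hr{} and \Hsc. Recall that $u$ then coincides with the variational solution \eqref{eq:uvf}, hence is locally semiconcave. Fix $(x,t)\in\mathcal R(u)$ and let $(\xi,\eta,u_\xi)$ solve \eqref{eq:LHsys} with the terminal condition \eqref{eq:xeuTC}. By \cref{prop:estLHsyssol} we have $\eta(0)\in D^-u_0(\xi(0))$; by \cref{prop:estatRu2} and \cref{lem:estball} (which use only \Hx{} and \Hp) we have $|x-\xi(0)|\le R(x,t)$, so $\xi(0)\in\overline{B_{R(x,t)}(x)}$ and therefore $|\eta(0)|\ge\underline I(x,t;u_0)$. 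Inserting this into the lower bounds of \cref{prop:estatRu1lam}, and using that $r\mapsto r e^{-(C_1+\lambda)t}-\tfrac{C_1\beta}{C_1+\lambda}(1-e^{-(C_1+\lambda)t})$ (case $\lambda\ne-C_1$) and $r\mapsto r-C_1\beta t$ (case $\lambda=-C_1$) are nondecreasing, yields the claimed bound for $p=D_xu(x,t)$. For a general $p\in D^-_xu(x,t)$: if $u$ is not differentiable at $(x,t)$ then $D^-_xu(x,t)=\emptyset$ by \cref{prop:DpmSC} and local semiconcavity, so there is nothing to prove; otherwise $(x,t)\in\mathcal R(u)$ and $D^-_xu(x,t)=\{D_xu(x,t)\}$, and the previous case applies. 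This is the analogue of \cref{thm:sHumain} in the present setting.

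\textbf{Removing \Hr{} and \Hsc.} Let $H_{0,\varepsilon}$ be the mollification of $H_0$ defined analogously to \eqref{eq:defHe}; it satisfies \Hr{} and \Hsc, keeps \Hx{} with the same $C_1,\beta$, and satisfies \Hp{} with $B_2$ replaced by $B_2+\varepsilon$. Let $u_\varepsilon$ solve \eqref{eq:geneHJ} with $H_0$ replaced by $H_{0,\varepsilon}$ and initial datum $u_0$; by stability of viscosity solutions \cite[Section 6]{CIL.92}, $u_\varepsilon\to u$ locally uniformly. Given $p\in D^-_xu(x,t)$, pick via \cite[Lemma II.2.4]{BC.97} points $(x_\varepsilon,t_\varepsilon)\to(x,t)$ and $p_\varepsilon\in D^-_xu_\varepsilon(x_\varepsilon,t_\varepsilon)$ with $p_\varepsilon\to p$, apply the smooth case to $u_\varepsilon$ and $H_{0,\varepsilon}$ (so that $R$ becomes $R_\varepsilon$), note that $R_\varepsilon(x_\varepsilon,t_\varepsilon)+|x_\varepsilon-x|<R(x,t)+\delta$ for every fixed $\delta>0$ once $\varepsilon$ is small, and let $\varepsilon\to0$ and then $\delta\to0$. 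This reproduces verbatim the last paragraph of the proof of \cref{thm:ourlb}.

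I do not anticipate a real obstacle: the only genuinely new ingredient, \cref{prop:estatRu1lam}, is already proved, and everything else transcribes the dynamical-method proof. The care required is purely bookkeeping---checking that \cref{prop:estatRu2} and \cref{lem:estball} are unaffected by the $\lambda u$ term (they depend only on \Hx{} and \Hp), that the $\lambda=-C_1$ formula is the limiting case already treated separately in \cref{prop:estatRu1lam}, and that the mollification degrades \Hp{} only by an $O(\varepsilon)$ amount, so that $R_\varepsilon\to R$ in the final limit.
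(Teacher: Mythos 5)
Your proposal is correct and takes essentially the same approach as the paper, which disposes of this theorem with a single sentence citing \cref{prop:estLHsyssol}, \cref{prop:estatRu2}, \cref{lem:estball} and \cref{prop:estatRu1lam} and implicitly leaving the smooth-case-then-mollify scaffolding (borrowed from \cref{thm:sHumain} and \cref{thm:ourlb}) to the reader. You have simply made that scaffolding explicit, including the semiconcavity argument via \cref{prop:DpmSC} and the $\varepsilon\to 0$, $\delta\to 0$ limits replacing $I$ by $\underline{I}$.
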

We also obtain the upper bound for gradients, but we omit it.

\begin{rem}%下界の比較
If $H_0$ satisfies \eqref{eq:1-homo}, then both gradient estimates in \cref{thm:sHulammain} and \eqref{eq:levellb} are true.
Now we compare the lower bound in \eqref{thm:sHulammain} and \eqref{eq:levellb}. 
For $\theta>0$, we define 
\begin{align*}
m(t)&:=\theta e^{-(C_1+\lambda)t} -\beta (e^{-\lambda t}-e^{-(C_1+\lambda)t}), \\
M(t)&:=
\begin{cases}
\theta e^{-(C_1+\lambda)t}-\frac{C_1\beta}{C_1+\lambda}(1-e^{-(C_1+\lambda)t}) & \mbox{if $\lambda \neq -C_1$}, \\
\theta-C_1\beta t & \mbox{if $\lambda=-C_1$}.
\end{cases}
\end{align*}
Then we have $M(0)=m(0)=\theta$ and 
\[
M(t)<m(t) \quad \mbox{if $\lambda \geqq 0$}, \quad 
M(t)>m(t) \quad \mbox{if $\lambda<0$}
\]
for $t \in (0,T]$.
In fact, if $\lambda=-C_1$, then we have
\begin{align*}
m(t)=\theta -\beta (e^{C_1 t}-1), \quad 
M(t)=\theta-C_1\beta t.
\end{align*}
Since $e^{C_1 t}-1>C_1 t$ ($t \in (0,T]$), we have
\[
m(0)=M(0)=\theta, \quad 
m(t)<M(t) \ (t \in (0,T]).
\]
If $\lambda \neq -C_1$, we define 
\begin{align*}
f(t)
&:=M(t)-m(t) \\
&=-\frac{C_1\beta}{C_1+\lambda}(1-e^{-(C_1+\lambda)t}) +\beta (e^{-\lambda t}-e^{-(C_1+\lambda)t}) \\
&=-\frac{\lambda \beta}{C_1+\lambda}e^{-(C_1+\lambda)t}+\beta e^{-\lambda t} -\frac{C_1\beta}{C_1+\lambda}.
\end{align*}
Here $f(0)=0$ and 
\begin{align*}
f^{\prime}(t)
&=\lambda \beta e^{-(C_1+\lambda)t}-\lambda \beta e^{-\lambda t} \\
&=\lambda \beta e^{-\lambda t} (e^{-C_1 t}-1).
\end{align*}
For $t \in (0,T]$, we have
\[
f^{\prime}(t)<0 \quad \mbox{if $\lambda \geqq 0$}, \quad 
f^{\prime}(t)>0 \quad \mbox{if $\lambda<0$ and $\lambda \neq -C_1$}
\]
and therefore 
\[
M(t)<m(t) \quad \mbox{if $\lambda \geqq 0$}, \quad 
M(t)>m(t) \quad \mbox{if $\lambda<0$ and $\lambda \neq -C_1$}.
\]
\end{rem}

\subsection{Concrete Equations}

\begin{ex}[Transport equation]\label{ex:trans}
We begin with the transport equation
\begin{equation}
u_t(x,t)+u(x,t)+\langle x,D_xu(x,t) \rangle=0 
\quad \mathrm{in} \ \mathbb{R}^n \times (0,T).
\label{eq:trans0}
\end{equation}
Here a linear Hamiltonian $H(x,u,p)=u+\langle x,p \rangle$ satisfies {\Hx} with $C_1=1$, $\beta=0$, {\Hp} with $A_2=1$, $B_2=0$, {\Hu} with $K_3=\lambda=1$, and {\Hc}.
By a direct calculation,
the viscosity solution of \eqref{eq:trans0}--\eqref{eq:ID} is given by
\begin{equation}
u(x,t)=e^{-t} u_0(x e^{-t}).
\label{eq:transsol}
\end{equation}
We therefore have
\[
D^-_x u(x,t)
=e^{-2t} D^- u_0 (x e^{-t})
=\{ e^{-2t} p \mid p \in D^- u_0 (x e^{-t}) \}
\]
for all $(x,t) \in \mathbb{R}^n \times (0,T)$ and $R(x,t)=|x|(e^t-1)$.
Since 
\[
|x e^{-t}-x|
=|x|(1-e^{-t})
=e^{-t}R(x,t)
\leqq R(x,t),
\]
we have $x e^{-t} \in \overline{B_{R(x,t)}(x)}$ and obtain
\[
|p| \geqq e^{-2t} \underline{I}(x_0,t;u_0),
\]
which is the lower bound estimate in \cref{thm:leylb}, \cref{thm:ourlb} and \cref{thm:sHulammain}.

Let us consider the initial data
\begin{equation}
u_0(x)=\max\{ 1-|x|, 0 \}. 
\label{eq:exID}
\end{equation}
Then the subgradients of $u_0$ are given as follows:
\begin{equation}
D^- u_0(x)
=\begin{cases}
\{ 0 \} & \mbox{if $|x|>1$}, \\
\left\{ -\dfrac{sx}{|x|} \relmiddle| s \in [0,1] \right\} & \mbox{if $|x|=1$}, \\
\left\{ -\dfrac{x}{|x|} \right\} & \mbox{if $0<|x|<1$}, \\
\emptyset & \mbox{if $x=0$}.
\end{cases}
\label{eq:exa1Dpru0}
\end{equation}
By \eqref{eq:transsol}, we find that 
\[ 
u(x,t)
=\min\{ e^{-t}-e^{-2t}|x|,0 \}.
\]
The graphs of $u_0(x)$ and $u(x,t)$ are shown in \cref{fig:ex1}.

\begin{figure}[htbp]
\captionsetup[subfigure]{font=footnotesize}
\centering
\subcaptionbox{$u_0(x)=\max\{ 1-|x|,0 \}$.}[.42\textwidth]{%
\begin{tikzpicture}[domain=-3:3, samples=200, >=stealth,scale=1.5]
\draw (0,0) node [below left] {O};
\draw[->] (-1.5,0) -- (1.5,0) node[below] {$x$};
\draw[->] (0,-0.2) -- (0,1.5);
\draw[thick] (-1.5,0) -- (-1,0)node[below]{$-1$} -- (0,1)node[right]{1} -- (1,0)node[below]{1} -- (1.5,0);
\end{tikzpicture}
}%
\quad $\longrightarrow$ \quad
\subcaptionbox{$u(x,t)=\max\{ e^{-t}-e^{-2t}|x|,0 \}$.}[.42\textwidth]{%
\begin{tikzpicture}[domain=-3:3, samples=200, >=stealth, scale=1.5]
\draw (0,0) node [below left] {O};
\draw[->] (-1.5,0) -- (1.5,0) node[below] {$x$};
\draw[->] (0,-0.2) -- (0,1.5);
\draw[dotted] (-1.5,0) -- (-1,0) -- (0,1) -- (1,0) -- (1.5,0);
\draw[thick] (-1.5,0) -- (-1.25,0) node[below]{$-e^t$} -- (0,0.75) node[above right]{$e^{-t}$} -- (1.25,0) node[below]{$e^t$} -- (1.5,0);
\end{tikzpicture}
}
\caption{Solution to \eqref{eq:trans0}.}
\label{fig:ex1}
\end{figure}

In this case, we can describe the condition {\Uo} as
\begin{equation}
|p|=1 \quad \mbox{for all $x \in  B_1(0)$ and $p \in D^- u_0(x)$}
\label{eq:exUo}
\end{equation}
and \cref{eq:ourdod} as 
\begin{align*}
\mathcal{E}(0,1)
&=\{ (x,t) \in B_1(0) \times (0,T) \mid |x|+|x|(e^t-1) < 1 \} \\
&=B_{e^{-t}}(0) \times (0,T).   
\end{align*}
By \cref{fig:ex1}(B), we obtain 
\[
|p|=e^{-2t} \quad \mbox{for all $(x,t) \in  B_{e^t}(0) \times (0,T)$ and $p \in D_x^- u(x,t)$},
\]
which means that the estimate \eqref{eq:ourleylb} is optimal, but the domain \eqref{eq:ourdod} is not optimal.

On the other hand, if we consider the equation
\begin{equation}
u_t(x,t)+u(x,t)-\langle x,D_xu(x,t) \rangle=0 
\quad \mathrm{in} \ \mathbb{R}^n \times (0,T),
\label{eq:trans-}
\end{equation}
then the viscosity solution of \eqref{eq:trans-}--\eqref{eq:ID} is given by
\[
u(x,t)=e^{-t} u_0(x e^{t}).
\]
Under the initial data \eqref{eq:exID}, we find that
\[ 
u(x,t)
=\min\{ e^{-t}-|x|,0 \}.
\]
The graphs of $u_0(x)$ and $u(x,t)$ are shown in \cref{fig:ex2}.

\begin{figure}[htbp]
\captionsetup[subfigure]{font=footnotesize}
\centering
\subcaptionbox{$u_0(x)=\max\{ 1-|x|,0 \}$.}[.42\textwidth]{%
\begin{tikzpicture}[domain=-3:3, samples=200, >=stealth,scale=1.5]
\draw (0,0) node [below left] {O};
\draw[->] (-1.5,0) -- (1.5,0) node[below] {$x$};
\draw[->] (0,-0.2) -- (0,1.5);
\draw[thick] (-1.5,0) -- (-1,0)node[below]{$-1$} -- (0,1)node[right]{1} -- (1,0)node[below]{1} -- (1.5,0);
\end{tikzpicture}
}%
\quad $\longrightarrow$ \quad
\subcaptionbox{$u(x,t)=\max\{ e^{-t}-|x|,0 \}$.}[.42\textwidth]{%
\begin{tikzpicture}[domain=-3:3, samples=200, >=stealth, scale=1.5]
\draw (0,0) node [above left] {O};
\draw[->] (-1.5,0) -- (1.5,0) node[below] {$x$};
\draw[->] (0,-0.2) -- (0,1.5);
\draw[dotted] (-1.5,0) -- (-1,0) -- (0,1) -- (1,0) -- (1.5,0);
\draw[thick] (-1.5,0) -- (-0.5,0)node[below]{$-e^{-t}$} -- (0,0.5)node[above right]{$e^{-t}$} -- (0.5,0)node[below]{$e^{-t}$} -- (1.5,0);
\end{tikzpicture}
}
\caption{Solution to \eqref{eq:trans-}.}
\label{fig:ex2}
\end{figure}
By \cref{fig:ex2}(B), we obtain 
\[
|p|=1 \quad \mbox{for all $(x,t) \in  B_{e^{-t}}(0) \times (0,T)$ and $p \in D_x^- u(x,t)$},
\]
which means that the domain \eqref{eq:ourdod} is optimal, but the estimate \eqref{eq:ourleylb} is not optimal.

Furthermore, we consider the equation
\begin{equation}
u_t(x,t)-u(x,t)+\langle x,D_xu(x,t) \rangle=0
\quad \mathrm{in} \ \mathbb{R}^n \times (0,T).
\label{eq:translev}
\end{equation}
This equation is derived from the modified level-set equations (\cite{H.19,BFS.24}).
The viscosity solution of \eqref{eq:translev}--\eqref{eq:ID} is given by
\[
u(x,t)=e^{t} u_0(x e^{-t}).
\]
Under the initial data \eqref{eq:exID}, we find that
\[ 
u(x,t)
=\min\{ e^{t}-|x|,0 \}.
\]
The graphs of $u_0(x)$ and $u(x,t)$ are shown in \cref{fig:ex3}.

\begin{figure}[htbp]
\captionsetup[subfigure]{font=footnotesize}
\centering
\subcaptionbox{$u_0(x)=\max\{ 1-|x|,0 \}$.}[.42\textwidth]{%
\begin{tikzpicture}[domain=-3:3, samples=200, >=stealth,scale=1.5]
\draw (0,0) node [below left] {O};
\draw[->] (-1.5,0) -- (1.5,0) node[below] {$x$};
\draw[->] (0,-0.2) -- (0,1.5);
\draw[thick] (-1.5,0) -- (-1,0)node[below]{$-1$} -- (0,1)node[right]{1} -- (1,0)node[below]{1} -- (1.5,0);
\end{tikzpicture}
}%
\quad $\longrightarrow$ \quad
\subcaptionbox{$u(x,t)=\max\{ e^{t}-|x|,0 \}$.}[.42\textwidth]{%
\begin{tikzpicture}[domain=-3:3, samples=200, >=stealth, scale=1.5]
\draw (0,0) node [below left] {O};
\draw[->] (-1.5,0) -- (1.5,0) node[below] {$x$};
\draw[->] (0,-0.2) -- (0,1.5);
\draw[dotted] (-1.5,0) -- (-1,0) -- (0,1) -- (1,0) -- (1.5,0);
\draw[thick] (-1.5,0) -- (-1.25,0)node[below]{$-e^{t}$} -- (0,1.25)node[above right]{$e^{t}$} -- (1.25,0)node[below]{$e^{t}$} -- (1.5,0);
\end{tikzpicture}
}
\caption{Solution to \eqref{eq:translev}.}
\label{fig:ex3}
\end{figure}
By \cref{fig:ex3}(B), we obtain 
\[
|p|=1 \quad \mbox{for all $(x,t) \in  B_{e^{t}}(0) \times (0,T)$ and $p \in D_x^- u(x,t)$},
\]
which means that both \eqref{eq:ourdod} and \eqref{eq:ourleylb} are not optimal.
However, we can obtain the optimal gradient estimate by \cref{thm:sHulammain}(ii).
\end{ex}

\begin{ex}[Eikonal equation]\label{ex:eiko}
We next consider the eikonal type equation
\begin{equation}
u_t(x,t)+u(x,t)+c|D_xu(x,t)|=0 
\quad \mathrm{in} \ \mathbb{R}^n \times (0,T).
\label{eq:eiko}
\end{equation}
Here a Hamiltonian $H(u,p)=u+c|p|$ ($c>0$) satisfies {\Hx} with $C_1=0$, $\beta=0$, {\Hp} with $A_2=0$, $B_2=c$,
{\Hu} with $K_3=\lambda=1$, and {\Hc}.
By the Hopf--Lax formula,
the viscosity solution of \eqref{eq:eiko}--\eqref{eq:ID} is given by
\begin{equation}
u(x,t)
=\min_{y \in \overline{B_{ct} (x)}} e^{-t} u_0(y).
\label{eq:eikosol}
\end{equation}
Moreover, we have $R(x,t)=ct$.

Now let us consider the initial data as \eqref{eq:exID}.
By \eqref{eq:eikosol}, we find that 
\[ 
u(x,t)
=\max\{ e^{-t}(1-ct-|x|),0 \}.
\]
The graphs of $u_0(x)$ and $u(x,t)$ are shown in \cref{fig:ex3}.
\begin{figure}[htbp]
\captionsetup[subfigure]{font=footnotesize}
\centering
\subcaptionbox{$u_0(x)=\max\{ 1-|x|,0 \}$.}[.42\textwidth]{%
\begin{tikzpicture}[domain=-3:3, samples=200, >=stealth,scale=1.5]
\draw (0,0) node [below left] {O};
\draw[->] (-1.5,0) -- (1.5,0) node[below] {$x$};
\draw[->] (0,-0.2) -- (0,1.5);
\draw[thick] (-1.5,0) -- (-1,0)node[below]{$-1$} -- (0,1)node[right]{1} -- (1,0)node[below]{1} -- (1.5,0);
\end{tikzpicture}
}%
\quad $\longrightarrow$ \quad
\subcaptionbox{$u(x,t)=\max\{ e^{-t}(1-ct-|x|),0 \}$.}[.42\textwidth]{%
\begin{tikzpicture}[domain=-3:3, samples=200, >=stealth, scale=1.5]
\draw (0,0) node [below left] {O};
\draw[->] (-1.5,0) -- (1.5,0) node[below] {$x$};
\draw[->] (0,-0.2) -- (0,1.5);
\draw[dotted] (-1.5,0) -- (-1,0) -- (0,1) -- (1,0) -- (1.5,0);
\draw[thick] (-1.5,0) -- (-0.5,0)node[below left]{$-1+ct$} -- (0,0.3)node[above right]{$e^{-t}(1-ct)$} -- (0.5,0)node[below]{$1-ct$} -- (1.5,0);
\end{tikzpicture}
}
\caption{Solution to \eqref{eq:eiko}.}
\label{fig:ex4}
\end{figure}

In this case, we can describe \eqref{eq:ourdod} as 
\begin{align*}
\mathcal{E}(0,1)
&=\{ (x,t) \in B_1(0) \times (0,T) \mid |x|+ct < 1 \} \\
&=B_{1-ct}(0) \times (0,T). 
\end{align*}
By \cref{fig:ex4}(B), we obtain 
\[
|p|=e^{-t} \quad \mbox{for all $(x,t) \in  B_{1-ct}(0) \times (0,T)$ and $p \in D_x^- u(x,t)$},
\]
which means that both \eqref{eq:ourleylb} and \eqref{eq:ourdod} are optimal.
\end{ex}

\appendix
\section{Local comparison principle}\label{sec:LCP}

In the Appendix, we prove some results used in the main proofs.
First, we establish the local comparison principle
(see \cite[Remark 2.4]{HH.23}, \cite[Theorem 6.1]{L.01}, \cite[Theorem V.3]{CL.83}, \cite[Theorem 2.4]{I.84}, \cite[Chapter 2, Theorem 5.3]{ABIL.11}).
In the proof of \cref{thm:leylb}, we analyze the difference between the subsolution $u_{\varepsilon}$ and the supersolution $u$.
Since $u_{\varepsilon} \leqq u$ in $\mathbb{R}^n \times (0, T)$, we may assume that a subsolution is less than or equal to a supersolution.

\begin{thm}[Local comparison principle]\label{thm:unCP}
Assume {\Hx}--{\Hu}.
Let $x_0 \in \mathbb{R}^n$, $r>0$ and $f,g \in C(\mathbb{R}^n \times [0,T))$.
Let $u \in C(\mathbb{R}^n \times (0,T))$ be a viscosity subsolution of 
\begin{equation}
u_t(x,t)+H(x,t,u(x,t),D_x u(x,t))=f(x,t)
\quad \mathrm{in} \ B_r(x_0) \times (0,T),
\label{eq:unCPsub}
\end{equation}
and $v \in C(\mathbb{R}^n \times (0,T))$ be a viscosity supersolution of 
\begin{equation}
v_t(x,t)+H(x,t,v(x,t),D_x v(x,t))=g(x,t)
\quad \mathrm{in} \ B_r(x_0) \times (0,T).
\label{eq:unCPsuper}
\end{equation}
Assume $u \leqq v$ in $\mathbb{R}^n \times (0, T)$.
Then we have 
\begin{equation}
\begin{split}
(u-v)(x,t)
&\leqq \sup_{y \in \overline{B_r(x_0)}} e^{-K_3t}(u-v)(y,0) \\
&\hspace{1cm} +e^{-K_3t}\int_0^t e^{K_3s} \sup_{y \in \overline{B_r(x_0)}} (f-g)(y,s)\,ds  
\end{split}
\label{eq:unCP}
\end{equation}
for all $(x,t) \in \overline{\mathcal{E}(x_0,r)}$,
where $\mathcal{E}(x_0,r)$ is defined in \eqref{eq:ourdod}.
\end{thm}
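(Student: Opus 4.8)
The plan is to prove the slightly stronger statement $u-v-\Theta\le 0$ on $\overline{\mathcal{E}(x_0,r)}$ by a localized doubling-of-variables argument, where the localization is produced by a barrier tailored to the cone $\mathcal{E}(x_0,r)$ and the mere Lipschitz dependence of $H$ on $u$ (from {\Hu}) is absorbed into the linear ODE solved by the right-hand side of \eqref{eq:unCP}. Here $\Theta(t)$ denotes that right-hand side, i.e. $\Theta(t)=e^{-K_3t}a+e^{-K_3t}\int_0^t e^{K_3s}\phi(s)\,ds$ with $a:=\sup_{y\in\overline{B_r(x_0)}}(u-v)(y,0)$ and $\phi(s):=\sup_{y\in\overline{B_r(x_0)}}(f-g)(y,s)$; the key properties we use are that $\Theta$ solves $\Theta'(t)+K_3\Theta(t)=\phi(t)$ with $\Theta(0)=a$, and that, since we assume $u\le v$, the term produced by {\Hu} at a doubled maximum point will carry the favourable sign. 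It suffices to treat $(\bar x,\bar t)\in\mathcal{E}(x_0,r)$, the boundary case following from continuity of $u,v$.

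The heart of the proof is the construction of the localizing barrier. For small parameters $\nu,\kappa>0$ I would set, for $(x,t)\in\mathcal{E}(x_0,r)$,
\[
\zeta(x,t):=\nu\,e^{(K_3+\kappa)t}\bigl(r-|x-x_0|-R(x,t)\bigr)^{-1},
\]
which is nonnegative, bounded below by $\nu/r$, and blows up as $(x,t)$ approaches the lateral boundary $\{R(x,t)+|x-x_0|=r\}$ of $\mathcal{E}(x_0,r)$. After a routine regularization of the singularities of $x\mapsto|x-x_0|$ and $x\mapsto R(x,t)$, a direct computation based on the two identities $\partial_tR=(B_2+A_2|x|)e^{A_2t}$ and $|D_xR|=e^{A_2t}-1$ (with the obvious analogues $\partial_tR=B_2$, $D_xR=0$ when $A_2=0$) gives the barrier inequality
\[
\zeta_t(x,t)-(A_2|x|+B_2)\,|D_x\zeta(x,t)|-K_3\zeta(x,t)\ \ge\ \kappa\,\zeta(x,t)\ \ge\ 0 .
\]
This is precisely where the choice \eqref{eq:defiR} of $R$ is used: with this $R$ the function $r-|x-x_0|-R(x,t)$ is a supersolution of the eikonal-type inequality governing the finite speed of propagation dictated by {\Hp}, so that $\mathcal{E}(x_0,r)$ is exactly the region into which the doubled maximum point can be confined. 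I expect verifying this barrier inequality — making the form of $R$, the constants of {\Hx}, {\Hp}, {\Hu}, and the shape of the domain fit together — to be the main obstacle.

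With the barrier available I would, for fixed $\nu,\kappa$ and small $\mu>0$, maximize over the relevant compact set the function
\[
\Phi(x,y,t,s):=u(x,t)-v(y,s)-\Theta(t)-\tfrac{|x-y|^2}{2\varepsilon}-\tfrac{|t-s|^2}{2\sigma}-\zeta(x,t)-\zeta(y,s)-\tfrac{\mu}{T-t},
\]
and study its maximizer $(\hat x,\hat y,\hat t,\hat s)$ as $\varepsilon,\sigma\to0$. Boundedness of $\Phi$ from below forces $\zeta(\hat x,\hat t),\zeta(\hat y,\hat s)$ to stay bounded, hence $\hat x,\hat y\in B_r(x_0)$; the terms $\mu/(T-t)$ and $|t-s|^2/2\sigma$ keep $\hat t,\hat s$ away from $T$; and if $\hat t,\hat s\to0$ then $u(\hat x,\hat t)-v(\hat y,\hat s)\le\sup_{\overline{B_r(x_0)}}(u-v)(\cdot,0)+o(1)=\Theta(0)+o(1)$, which makes the maximal value negative in the limit.

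Otherwise the maximum is interior, and I would use the subsolution inequality for $u$ with the $C^1$ test function $\Theta(t)+\tfrac{|x-\hat y|^2}{2\varepsilon}+\tfrac{|t-\hat s|^2}{2\sigma}+\zeta(x,t)+\tfrac{\mu}{T-t}$ together with the matching supersolution inequality for $v$ in the variables $(y,s)$. Subtracting them, the $1/\sigma$-terms cancel; bounding the Hamiltonian difference from below by {\Hx} (the $x$-increment term is $O(|\hat x-\hat y|+|\hat x-\hat y|^2/\varepsilon)\to0$), by {\Hp} (the gradient-increment terms are controlled by $(A_2|\hat x|+B_2)|D_x\zeta(\hat x,\hat t)|$ and its $\hat y$-analogue), by {\Hu} together with $u\le v$ (the $u$-increment term, carrying the good sign), and by the continuity of $H$ and $g$ in $t$ (the doubling slope $(\hat x-\hat y)/\varepsilon$ staying bounded, as it does in the application where $u$ is locally Lipschitz), and finally invoking the barrier inequality and the ODE for $\Theta$, everything collapses to an inequality of the shape $\kappa\bigl[\zeta(\hat x,\hat t)+\zeta(\hat y,\hat s)\bigr]+\mu/(T-\hat t)^2\le o(1)$ (up to a harmless positive constant), which is impossible since $\zeta\ge\nu/r>0$. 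Hence in all cases $\max\Phi<0$; restricting to the diagonal and letting $\varepsilon,\sigma\to0$, then $\mu\to0$, then $\nu,\kappa\to0$, yields $u-v-\Theta\le0$ on $\mathcal{E}(x_0,r)$ and, by continuity, on $\overline{\mathcal{E}(x_0,r)}$, which is \eqref{eq:unCP}.
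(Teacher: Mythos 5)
Your proposal is in the same spirit as the paper's Appendix A argument: localize by a barrier that blows up on the lateral boundary of $\mathcal{E}(x_0,r)$ (which works precisely because $r-|x-x_0|-R(x,t)$ is a supersolution of the eikonal-type inequality dictated by {\Hp}), exploit {\Hu} with the assumption $u\leqq v$ to absorb the Lipschitz dependence in $u$ into the linear ODE solved by $\Theta$, and close with a doubling-of-variables argument. Your barrier $\zeta=\nu e^{(K_3+\kappa)t}(r-|x-x_0|-R(x,t))^{-1}$ is a different explicit form from the paper's $\chi_\varepsilon\circ h_\varepsilon$ (which uses an increasing cutoff of a $\sqrt{\cdot^2+\varepsilon^2}$-smoothing of $|x-x_0|+R(x,t)$), but the key computation --- that the barrier is a strict supersolution of $\zeta_t-(A_2|x|+B_2)|D_x\zeta|-K_3\zeta\geqq 0$ --- is the same and your verification is correct modulo routine regularization of $|x-x_0|$ and $|x|$. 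The main structural difference is that you do everything in a single doubling against $u$ and $v$ with $\Theta$ and the barrier built in, whereas the paper first proves an auxiliary lemma (its Lemma A.2) that $w=u-v$ is a viscosity subsolution of the transport-type inequality $w_t+K_3w-(A_2|x|+B_2)|D_x w|\leqq f-g$, and then runs a \emph{single-variable} barrier argument for $W=e^{K_3t}w$.

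Where the paper's two-step packaging pays off, and where your sketch has soft spots, is precisely in the two places you are vague.
First, the increment of $H$ in the time variable is not controlled by any of {\Hx}--{\Hu}, so you must not let $\varepsilon$ and $\sigma$ go to zero together; the standard and necessary fix is to send $\sigma\to0$ first (for fixed $\varepsilon$), which synchronizes $\hat t$ and $\hat s$ so that $H$ is evaluated at the same time and the $t$-increment vanishes by continuity (the slope $(\hat x-\hat y)/\varepsilon$ is then automatically bounded for fixed $\varepsilon$). Appealing to the slope staying bounded ``because $u$ is locally Lipschitz in the application'' is not available under the hypotheses of the theorem, which only assume continuity; restructure the limits instead of the hypothesis.
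Second, the claim that {\Hu} together with $u\leqq v$ makes the $u$-increment ``carry the good sign'' is not correct at the doubled maximum: {\Hu} only gives $\geqq -K_3|u(\hat x,\hat t)-v(\hat y,\hat s)|$, and since the two functions are evaluated at \emph{different} points the quantity $u(\hat x,\hat t)-v(\hat y,\hat s)$ need not be nonpositive and, if the doubled maximum of $\Phi$ is assumed positive, is actually bounded \emph{below} by $\Theta+\zeta_1+\zeta_2+\cdots$, which points the wrong way. The sign $u\leqq v$ can only be exploited after the maximum is un-doubled (so that one has $|w(z,\tau)|=-w(z,\tau)$), and making this rigorous requires either the careful case analysis on the sign of $u(\hat x,\hat t)-v(\hat y,\hat s)$ (using the fact that $u\leqq v$ and continuity force $u(\hat x,\hat t)-v(\hat y,\hat s)\leqq o(1)$ as $\varepsilon,\sigma\to0$), or --- more cleanly --- the paper's route of first proving the subsolution property of $w$ with $|w|$ in it, and only then replacing $|w|$ by $-w$ in one stroke. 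Both gaps are fixable, but as written your sketch does not establish the theorem.
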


To prove this theorem,
we use the fundamental result.

\begin{lem}\label{lem:unCPlem}
Under the settings in \cref{thm:unCP},
$w:=u-v$ is a viscosity subsolution of 
\begin{equation}
\begin{split}
w_t(x,t)+K_3w(x,t)-(A_2|x|+B_2)|D_x w(x,t)|=(f-g)(x,t) \\
\quad \mathrm{in} \ B_r(x_0) \times (0,T).  
\end{split}
\label{eq:unCPwsub}
\end{equation}
\end{lem}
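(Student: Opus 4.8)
The plan is to prove \cref{lem:unCPlem} by the doubling-of-variables method, following the classical scheme for evolutionary Hamilton--Jacobi equations. Let $\phi \in C^1(\mathbb{R}^n \times (0,T))$ be a test function such that $w-\phi$ has a local maximum at some $(\hat x,\hat t) \in B_r(x_0) \times (0,T)$; we must establish
\[
\phi_t(\hat x,\hat t)+K_3 w(\hat x,\hat t)-(A_2|\hat x|+B_2)|D_x\phi(\hat x,\hat t)| \leqq (f-g)(\hat x,\hat t).
\]
Adding $|x-\hat x|^4+(t-\hat t)^4$ to $\phi$ leaves $\phi_t(\hat x,\hat t)$ and $D_x\phi(\hat x,\hat t)$ unchanged and makes the maximum strict, so we may assume $w-\phi$ has a strict maximum over a closed box $Q:=\overline{B_\delta(\hat x)}\times[\hat t-\delta,\hat t+\delta]\subset B_r(x_0)\times(0,T)$. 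For $\varepsilon>0$ I would maximize
\[
\Psi_\varepsilon(x,t,y,s):=u(x,t)-v(y,s)-\phi(x,t)-\frac{|x-y|^2}{2\varepsilon^2}-\frac{|t-s|^2}{2\varepsilon^2}
\]
over $Q\times Q$, with maximizer $(x_\varepsilon,t_\varepsilon,y_\varepsilon,s_\varepsilon)$.

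The standard estimates for this construction give that $(x_\varepsilon,t_\varepsilon)$ and $(y_\varepsilon,s_\varepsilon)$ both tend to $(\hat x,\hat t)$, that $\varepsilon^{-2}\big(|x_\varepsilon-y_\varepsilon|^2+|t_\varepsilon-s_\varepsilon|^2\big)\to 0$, and that for small $\varepsilon$ the maximizer lies in the interior of $Q\times Q$. Set $p_\varepsilon:=\varepsilon^{-2}(x_\varepsilon-y_\varepsilon)$ and $\tau_\varepsilon:=\varepsilon^{-2}(t_\varepsilon-s_\varepsilon)$. Since $(x,t)\mapsto u(x,t)-\big(\phi(x,t)+\tfrac{|x-y_\varepsilon|^2}{2\varepsilon^2}+\tfrac{|t-s_\varepsilon|^2}{2\varepsilon^2}\big)$ has a local maximum at $(x_\varepsilon,t_\varepsilon)$ and $(y,s)\mapsto v(y,s)+\tfrac{|x_\varepsilon-y|^2}{2\varepsilon^2}+\tfrac{|t_\varepsilon-s|^2}{2\varepsilon^2}$ has a local minimum at $(y_\varepsilon,s_\varepsilon)$, applying the definitions of subsolution to \eqref{eq:unCPsub} and of supersolution to \eqref{eq:unCPsuper} and subtracting the resulting inequalities (the terms $\tau_\varepsilon$ cancel) yields
\[
\phi_t(x_\varepsilon,t_\varepsilon)+H\big(x_\varepsilon,t_\varepsilon,u(x_\varepsilon,t_\varepsilon),D_x\phi(x_\varepsilon,t_\varepsilon)+p_\varepsilon\big)-H\big(y_\varepsilon,s_\varepsilon,v(y_\varepsilon,s_\varepsilon),p_\varepsilon\big)\leqq f(x_\varepsilon,t_\varepsilon)-g(y_\varepsilon,s_\varepsilon).
\]

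Next I would bound the Hamiltonian difference from below by inserting the intermediate points $(x_\varepsilon,t_\varepsilon,u(x_\varepsilon,t_\varepsilon),p_\varepsilon)$, $(x_\varepsilon,t_\varepsilon,v(y_\varepsilon,s_\varepsilon),p_\varepsilon)$ and $(y_\varepsilon,t_\varepsilon,v(y_\varepsilon,s_\varepsilon),p_\varepsilon)$: the four resulting differences are controlled, in order, by $-(A_2|x_\varepsilon|+B_2)|D_x\phi(x_\varepsilon,t_\varepsilon)|$ via \Hp, by $-K_3|u(x_\varepsilon,t_\varepsilon)-v(y_\varepsilon,s_\varepsilon)|$ via \Hu, by $-C_1(\beta+|p_\varepsilon|)|x_\varepsilon-y_\varepsilon|$ via \Hx, and by $-\omega(|t_\varepsilon-s_\varepsilon|)$, where $\omega$ is a modulus of continuity of $H$ on an appropriate compact set. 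Because $C_1(\beta+|p_\varepsilon|)|x_\varepsilon-y_\varepsilon|=C_1\beta|x_\varepsilon-y_\varepsilon|+C_1\varepsilon^{-2}|x_\varepsilon-y_\varepsilon|^2\to 0$ and $|t_\varepsilon-s_\varepsilon|\to 0$, the last two contributions vanish in the limit. Letting $\varepsilon\to 0$, using the continuity of $u,v,f,g$ and $\phi$, and finally the hypothesis $u\leqq v$ --- so that $|u(x_\varepsilon,t_\varepsilon)-v(y_\varepsilon,s_\varepsilon)|\to|w(\hat x,\hat t)|=-w(\hat x,\hat t)$ --- produces exactly the required inequality for $\phi$, i.e.\ $w$ is a viscosity subsolution of \eqref{eq:unCPwsub}.

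The delicate point is the fourth term of the splitting: since $H$ is assumed merely continuous (not Lipschitz or uniformly continuous) in $t$, controlling $H(y_\varepsilon,t_\varepsilon,v(y_\varepsilon,s_\varepsilon),p_\varepsilon)-H(y_\varepsilon,s_\varepsilon,v(y_\varepsilon,s_\varepsilon),p_\varepsilon)$ by a single modulus requires that the arguments --- in particular $p_\varepsilon$ --- remain in a fixed compact set as $\varepsilon\to 0$; keeping the doubling parameters under control here is where the linear--growth structure of $H$ encoded in \Hx--\Hp must be exploited. Everything else is the routine bookkeeping of the doubling method, and the hypothesis $u\leqq v$ is what converts the max-type quantity $-K_3|u-v|$ into the linear term $+K_3 w$ appearing in \eqref{eq:unCPwsub}.
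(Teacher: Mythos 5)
Your overall strategy (doubling variables, applying the sub/supersolution inequalities, splitting the Hamiltonian difference via intermediate points using \Hx--\Hu, then using $u\leqq v$ to turn $-K_3|u-v|$ into $+K_3w$) is the same as the paper's. The gap is exactly the point you flagged and did not close: you use a \emph{single} penalization parameter $\varepsilon$ for both the space and time doubling. With that choice, $|p_\varepsilon|=\varepsilon^{-2}|x_\varepsilon-y_\varepsilon|$ need not stay bounded as $\varepsilon\to 0$ (the standard estimate only gives $\varepsilon^{-2}|x_\varepsilon-y_\varepsilon|^2\to 0$, which allows e.g.\ $|p_\varepsilon|\sim\varepsilon^{-1/2}$ since $u,v$ are not assumed Lipschitz), so the term $H(y_\varepsilon,t_\varepsilon,v(y_\varepsilon,s_\varepsilon),p_\varepsilon)-H(y_\varepsilon,s_\varepsilon,v(y_\varepsilon,s_\varepsilon),p_\varepsilon)$ cannot be controlled by a modulus of continuity of $H$ on any fixed compact set. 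Your suggestion that ``the linear--growth structure encoded in \Hx--\Hp'' rescues this does not work: \Hx\ and \Hp\ control the $x$- and $p$-dependence of $H$ but say nothing about the $t$-dependence for large $|p|$, which is precisely what the offending term requires.

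The paper closes this gap by introducing a \emph{second} penalization parameter $\alpha$ for the time variable, namely $\Phi_{\varepsilon,\alpha}(x,t,y,s)=u(x,t)-v(y,s)-|x-y|^2/\varepsilon^2-|t-s|^2/\alpha^2-\phi(x,t)$, and sending $\alpha\to 0$ \emph{before} $\varepsilon\to 0$. With $\varepsilon$ held fixed, the max points $(x_{\varepsilon,\alpha},y_{\varepsilon,\alpha},t_{\varepsilon,\alpha},s_{\varepsilon,\alpha})$ converge along a subsequence to $(x_\varepsilon,y_\varepsilon,t_\varepsilon,t_\varepsilon)$ and the momentum $p_{\varepsilon,\alpha}=2(x_{\varepsilon,\alpha}-y_{\varepsilon,\alpha})/\varepsilon^2$ stays in a compact set depending only on $\varepsilon$, so the two time arguments merge by mere continuity of $H$ and there is no time--modulus term at all in the second limit $\varepsilon\to 0$. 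That limit then needs only \Hx--\Hu, exactly as in your splitting. Your write-up should be revised to use this two-parameter penalization (or some equivalent device); without it the argument has a hole at the step you correctly identified.
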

\begin{proof}
Let $\phi \in C^1(B_r(x_0) \times (0,T))$ such that
$w-\phi$ has a strict local maximum at $(\hat{x},\hat{t}) \in B_r(x_0) \times (0,T)$.
We may assume that
$w-\phi$ has a strict maximum at $(\hat{x},\hat{t})$ over $\overline{B_{\tau}(\hat{x})} \times [\hat{t}-\tau,\hat{t}+\tau]$ ($\tau>0$).
For $\varepsilon,\alpha>0$,
we define $\Phi_{\varepsilon,\alpha}: \overline{B_{\tau}(\hat{x})} \times \overline{B_{\tau}(\hat{x})} \times [\hat{t}-\tau,\hat{t}+\tau] \times [\hat{t}-\tau,\hat{t}+\tau] \to \mathbb{R}$ as
\[
\Phi_{\varepsilon,\alpha}(x,t,y,s):=
u(x,t)-v(y,s)-\dfrac{|x-y|^2}{\varepsilon^2}-\dfrac{|t-s|^2}{\alpha^2}-\phi(x,t).
\]
Let $(x_{\varepsilon,\alpha},y_{\varepsilon,\alpha},t_{\varepsilon,\alpha},s_{\varepsilon,\alpha}) \in (\overline{B_{\tau}(\hat{x})} \times [\hat{t}-\tau,\hat{t}+\tau])^2$
be a maximum point of $\Psi_{\varepsilon,\alpha}$.
By the classical argument (for example, see \cite{CIL.92}),
we have
$(x_{\varepsilon,\alpha},y_{\varepsilon,\alpha},t_{\varepsilon,\alpha},s_{\varepsilon,\alpha}) \in (B_r(x_0) \times (0,T))^2$ for small enough $\varepsilon,\alpha>0$
and 
there exist $x_{\varepsilon},y_{\varepsilon} \in B_r(x_0)$ such that
$(x_{\varepsilon,\alpha},y_{\varepsilon,\alpha}) \to (x_{\varepsilon},y_{\varepsilon})$ and 
$t_{\varepsilon,\alpha},s_{\varepsilon,\alpha} \to t_{\varepsilon}$ 
as $\alpha \to +0$.

Since $u$ is a viscosity subsolution of \eqref{eq:unCPsub},
we have
\begin{align*}
&\dfrac{2(t_{\varepsilon,\alpha}-s_{\varepsilon,\alpha})}{\alpha^2}
 +\phi_t(x_{\varepsilon,\alpha},t_{\varepsilon,\alpha}) \\
&\hspace{1cm} 
+H\left(x_{\varepsilon,\alpha},t_{\varepsilon,\alpha},u(x_{\varepsilon,\alpha},t_{\varepsilon,\alpha}),\frac{2(x_{\varepsilon,\alpha}-y_{\varepsilon,\alpha})}{\varepsilon^2}+D_x\phi(x_{\varepsilon,\alpha},t_{\varepsilon,\alpha}) \right) \\
&\hspace{1cm}
 \leqq f(x_{\varepsilon,\alpha},t_{\varepsilon,\alpha})
\end{align*}
and since $v$ is a viscosity supersolution of \eqref{eq:unCPsuper},
we have
\[
\dfrac{2(t_{\varepsilon,\alpha}-s_{\varepsilon,\alpha})}{\alpha^2}
 +H \left(y_{\varepsilon,\alpha},s_{\varepsilon,\alpha},v(y_{\varepsilon,\alpha},s_{\varepsilon,\alpha}),\frac{2(x_{\varepsilon,\alpha}-y_{\varepsilon,\alpha})}{\varepsilon^2} \right) 
\geqq g(y_{\varepsilon,\alpha},s_{\varepsilon,\alpha}).
\]
Combining them,
we obtain
\begin{align*}
& \phi_t(x_{\varepsilon,\alpha},t_{\varepsilon,\alpha})
 +H \left(x_{\varepsilon,\alpha},t_{\varepsilon,\alpha},u(x_{\varepsilon,\alpha},t_{\varepsilon,\alpha}),\frac{2(x_{\varepsilon,\alpha}-y_{\varepsilon,\alpha})}{\varepsilon^2}+D_x\phi(x_{\varepsilon,\alpha},t_{\varepsilon,\alpha}) \right) \\
& \hspace{0.5cm} -H\left(y_{\varepsilon,\alpha},s_{\varepsilon,\alpha},v(y_{\varepsilon,\alpha},s_{\varepsilon,\alpha}),\frac{2(x_{\varepsilon,\alpha}-y_{\varepsilon,\alpha})}{\varepsilon^2} \right) \\
&\leqq f(x_{\varepsilon,\alpha},t_{\varepsilon,\alpha})-g(y_{\varepsilon,\alpha},s_{\varepsilon,\alpha} ).
\end{align*}
Letting $\alpha \to +0$, we have
\begin{align*}
& \phi_t(x_{\varepsilon},t_{\varepsilon})
 +H \left(x_{\varepsilon},t_{\varepsilon},u(x_{\varepsilon},t_{\varepsilon}),\frac{2(x_{\varepsilon}-y_{\varepsilon})}{\varepsilon^2}+D_x\phi(x_{\varepsilon},t_{\varepsilon}) \right) \\
& \hspace{1cm} -H \left(y_{\varepsilon},t_{\varepsilon},v(y_{\varepsilon},t_{\varepsilon}),\frac{2(x_{\varepsilon}-y_{\varepsilon})}{\varepsilon^2} \right)
\leqq f(x_{\varepsilon},t_{\varepsilon})-g(y_{\varepsilon},t_{\varepsilon}).
\end{align*}
By {\Hx}--{\Hu}, we obtain
\begin{align*}
& \phi_t(x_{\varepsilon},t_{\varepsilon}) -K_3|u(x_{\varepsilon},t_{\varepsilon})-v(y_{\varepsilon},t_{\varepsilon})| -(A_2|x_{\varepsilon}|+B_2)|D_x\phi(x_{\varepsilon},t_{\varepsilon})| \\
&\leqq f(x_{\varepsilon},t_{\varepsilon})-g(y_{\varepsilon},t_{\varepsilon})
  +C_1\beta|x_{\varepsilon}-y_{\varepsilon}|+2C_1\frac{|x_{\varepsilon}-y_{\varepsilon}|^2}{\varepsilon^2}.
\end{align*}
Letting $\varepsilon \to +0$, we have
\[
\phi_t(\hat{x},\hat{t}) -K_3|w(\hat{x},\hat{t})| -(A_2|\hat{x}|+B_2)|D_x\phi(\hat{x},\hat{t})| 
\leqq (f-g)(\hat{x},\hat{t}),
\]
which implies that 
$w$ is a viscosity subsolution of \eqref{eq:unCPwsub}
since $w \leqq 0$.
\end{proof}

\begin{proof}[Proof of \cref{thm:unCP}]
By \cref{lem:unCPlem},
$w=u-v$ is a viscosity subsolution of \eqref{eq:unCPwsub}.
Moreover,
$W(x,t):=e^{K_3t}w(x,t)$ is a viscosity subsolution of 
\begin{equation}
\begin{split}
W_t(x,t)-(A_2|x|+B_2)|D_x W(x,t)|=e^{K_3t}(f-g)(x,t) \\
\quad \mathrm{in} \ B_r(x_0) \times (0,T)  
\end{split}
\label{eq:unCPWsub}
\end{equation}
(for example, see \cite[Proposition II.2.5]{BC.97}).
For $\eta>0$ and $0<\varepsilon<r$,
we define 
$\Psi_{\varepsilon,\eta}: \overline{B_r(x_0)} \times [0,T) \to \mathbb{R}$ as
\begin{align*}
\Psi_{\varepsilon,\eta}(x,t)
&:=W(x,t)-\eta t-\dfrac{\eta}{T-t} \\
&\hspace{1cm} -\int_0^t e^{K_3s}\sup_{y \in \overline{B_r(x_0)}} (f-g)(y,s)\,ds -\chi_{\varepsilon}(h_{\varepsilon}(x,t)), 
\end{align*}
where 
$h_{\varepsilon}(x,t) \in C^1(B_r(x_0) \times (0,T))$ is a nonnegative function and 
$\chi_{\varepsilon}: [0,\infty) \to [0,\infty]$ satisfies that
$\chi_{\varepsilon} \in C^1(0,r)$ is increasing in $[0,r)$ and 
\[
\chi_{\varepsilon} \equiv 0 \ \mathrm{in} \ [0,r-\varepsilon], \quad
\lim_{z \to r} \chi_{\varepsilon}(z)=\infty, \quad
\chi_{\varepsilon} \equiv \infty \ \mathrm{in} \ [r,\infty).
\]
According to the classical argument,
$\Psi_{\varepsilon,\eta}$ attains a maximum at $(\hat{x},\hat{t}) \in B_r(x_0) \times [0,T)$
over $\overline{B_r(x_0)} \times [0,T)$.

Suppose $\hat{t}>0$.
Since $W$ is a viscosity subsolution of \eqref{eq:unCPWsub},
we have 
\begin{align*}
\eta+\frac{\eta}{(T-\hat{t})^2}
&+e^{K_3 \hat{t}} 
 \left\{ \sup_{y \in \overline{B_r(x_0)}}  (f-g)(y,\hat{t}) -(f-g)(\hat{x},\hat{t}) \right\} \\
&+\chi_{\varepsilon}'(h_{\varepsilon}(x,t)) \left\{ 
 (h_{\varepsilon})_t(\hat{x},\hat{t})-(A_2|\hat{x}|+B_2)|D_xh_{\varepsilon}(\hat{x},\hat{t})|
 \right\}
\leqq 0,
\end{align*}
which is a contradiction if $h_{\varepsilon}$ satisfies
\begin{equation}
(h_{\varepsilon})_t(\hat{x},\hat{t})-(A_2|\hat{x}|+B_2)|D_xh_{\varepsilon}(\hat{x},\hat{t})| \geqq 0.
\label{eq:CPhineq}
\end{equation}

We now define 
\begin{equation}
h_{\varepsilon}(x,t):=
\begin{cases}
B_2t +\sqrt{|x-x_0|^2+\varepsilon^2} & (A_2=0), \\ \smallskip
\left( \dfrac{B_2}{A_2}+\sqrt{|x|^2+\varepsilon^2} \right)(e^{A_2t}-1) +\sqrt{|x-x_0|^2+\varepsilon^2} & (A_2 \neq 0).
\end{cases} 
\label{eq:CPhdef}
\end{equation}
Then, we find that $h_{\varepsilon} \in C^1(B_r(x_0) \times (0,T))$, $h_{\varepsilon} \geqq 0$ and 
\cref{eq:CPhineq} holds by \cref{rem:cal}.
Therefore, we have $\hat{t}=0$ and obtain
\begin{align*}
\Psi_{\varepsilon}(x,t)
&\leqq \Psi_{\varepsilon}(\hat{x},0)
=W(\hat{x},0)-\frac{\eta}{T}-\chi_{\varepsilon}(h_{\varepsilon}(\hat{x},0)) \\
&\leqq W(\hat{x},0)
=(u-v) (\hat{x},0) \\
&\leqq \sup_{y \in \overline{B_r(x_0)}} (u-v)(y,0),
\end{align*}
that is,
\begin{equation}
\begin{split}
&W(x,t)-\eta t-\dfrac{\eta}{T-t} \\
&\leqq 
\sup_{y \in \overline{B_r(x_0)}} (u-v)(y,0)
 +\int_0^t e^{K_3s}\sup_{y \in \overline{B_r(x_0)}} (f-g)(y,s)\,ds
  +\chi_{\varepsilon}(h_{\varepsilon}(x,t))
\end{split}
\label{eq:unCPdom} 
\end{equation}
for all $(x,t) \in B_r(x_0) \times (0,T)$.
If $h_{\varepsilon}(x,t) \leqq r-\varepsilon$,
we have $\chi_{\varepsilon}(h_{\varepsilon}(x,t))=0$.
Letting $\varepsilon \to 0$ for $h_{\varepsilon}(x,t) \leqq r-\varepsilon$,
we obtain
\begin{equation}
R(x,t)+|x-x_0| \leqq r,
\label{eq:defReq}
\end{equation}
where $R(x,t)$ is defined in \eqref{eq:defiR}.
We remark that $(x,t)$ which satisfies \eqref{eq:defReq} belongs to $\mathcal{E}(x_0,r)$.
Letting $\eta \to 0$ in \eqref{eq:unCPdom},
we have 
\begin{align*}
e^{K_3t}(u-v)(x,t)
\leqq \sup_{y \in \overline{B_r(x_0)}} (u-v)(y,0)
 +\int_0^t e^{K_3s}\sup_{y \in \overline{B_r(x_0)}} (f-g)(y,s)\,ds
\end{align*}
for all $(x,t) \in \mathcal{E}(x_0,r)$, which completes the proof.
\end{proof}

\begin{rem}
We only use the assumption $u \leqq v$ in $\mathbb{R}^n \times (0,T)$ to obtain \eqref{eq:unCPWsub} from \eqref{eq:unCPwsub} since this is true for $w \leqq 0$.
This assumption holds under further assumptions, such as uniform continuity and monotonicity on the variable $u$, because we can prove the comparison principle under these conditions.
\end{rem}

\begin{rem}\label{rem:cal}
Let us confirm that $h_{\varepsilon}$ defined in \eqref{eq:CPhdef} satisfies \eqref{eq:CPhineq}.
If $A_2=0$, then 
\begin{align*}
(h_{\varepsilon})_t(x,t)=B_2, \quad
D_xh_{\varepsilon}(x,t)=\frac{x-x_0}{\sqrt{|x-x_0|^2+\varepsilon^2}},
\end{align*}
and therefore 
\begin{align*}
(h_{\varepsilon})_t(x,t)-(A_2|x|+B_2)|D_xh_{\varepsilon}(x,t)|
&=B_2-B_2\frac{|x-x_0|}{\sqrt{|x-x_0|^2+\varepsilon^2}} \\
&\geqq B_2-B_2
=0.
\end{align*}
If $A_2 \neq 0$, then
\begin{align*}
(h_{\varepsilon})_t(x,t)&=A_2e^{A_2t} \left( \frac{B_2}{A_2}+\sqrt{|x|^2+\varepsilon^2} \right), \\
D_xh_{\varepsilon}(x,t)&=\frac{x}{\sqrt{|x|^2+\varepsilon^2}} (e^{A_2t}-1)+ \frac{x-x_0}{\sqrt{|x-x_0|^2+\varepsilon^2}},
\end{align*}
and therefore 
\begin{align*}
&(h_{\varepsilon})_t(x,t)-(A_2|x|+B_2)|D_xh_{\varepsilon}(x,t)| \\
&\geqq A_2e^{A_2t} \left( \frac{B_2}{A_2}+\sqrt{|x|^2+\varepsilon^2} \right) \\
&\hspace{0.5cm}  
 -(A_2|x|+B_2) \left( \frac{|x|}{\sqrt{|x|^2+\varepsilon^2}} (e^{A_2t}-1) +(A_2|x|+B_2)\frac{|x-x_0|}{\sqrt{|x-x_0|^2+\varepsilon^2}} \right) \\
&\geqq A_2e^{A_2t} \left( \frac{B_2}{A_2}+|x| \right)
 -e^{A_2t}(A_2|x|+B_2)
 =0.
\end{align*}
Therefore $h_{\varepsilon}$ satisfies \eqref{eq:CPhineq}.
\end{rem}

\begin{rem}
In \cite[Proof of Theorem 6.1]{L.01},
$h_{\varepsilon}$ is defined as 
\[
h_{\varepsilon}(x,t)
=e^{(A_2+B_2+A_2|x_0|)t}(1+\sqrt{|x-x_0|^2+\varepsilon^2})-1
\]
and \cref{eq:unCP} holds in $\mathcal{D}(x_0,r)$ for $K_3=0$,
where $\mathcal{D}(x_0,r)$ is defined in \eqref{eq:leydod}.
\end{rem}

\section{Equvalence to two solutions}\label{sec:equiv}

\begin{proof}[Proof of \cref{thm:equivvb}]
Assume that $u$ is a Barron--Jensen solution of \eqref{eq:HJu}--\eqref{eq:ID}.
It is obvious that $u$ is a viscosity supersolution of \eqref{eq:HJu}--\eqref{eq:ID} by \eqref{rem:vis}.
Moreover, by \cref{lem:infconvsol}, 
$u_{\varepsilon,\alpha}$ defined in \eqref{eq:infconvt} is a viscosity subsolution of \eqref{eq:infconvapeq} for $\varepsilon,\alpha$ small enough.
Letting $\varepsilon,\alpha \to 0$,
a discontinuous stability theorem (\cite[Section 6]{CIL.92}) implies that
$u$ is a viscosity subsolution of \eqref{eq:HJu}--\eqref{eq:ID} in $\mathcal{A}_{\rho}$.
Since $\bigcup \mathcal{A}_{\rho}=\mathbb{R}^n \times (0,T)$,
we have $u$ is a viscosity solution of \eqref{eq:HJu}--\eqref{eq:ID} in $\mathbb{R}^n \times (0,T)$.

Assume that $u$ is a viscosity solution of \eqref{eq:HJu}--\eqref{eq:ID}.
Since $u$ is a viscosity subsolution of \eqref{eq:HJu}--\eqref{eq:ID},
the sup-convolution
\[
u^{\varepsilon,\alpha}(x,t)
:=\sup_{(y,s) \in \overline{\mathcal{A}}}
 \left\{  u(y,s)-e^{\gamma t}\frac{|x-y|^2}{\varepsilon^2}-\frac{|t-s|^2}{\alpha^2} \right\}  
\]
is a viscosity subsolution of
\[
\begin{split}
&u_t(x,t)+H(x,t,u(x,t),D_xu(x,t)) \\
&=\dfrac{C_1\beta}{2}e^{\gamma t}\varepsilon^2 +K_3\omega_{\mathcal{A}}(\mathcal{M}_{\mathcal{A}}(\varepsilon+\alpha)) +\mu_{\varepsilon,\mathcal{A}}(\mathcal{M}_{\mathcal{A}}\alpha)
\quad \mathrm{in} \ \mathcal{A}_{\rho}.
\end{split}
\]
By \cref{lem:aesol},
$u^{\varepsilon,\alpha}$ is also a viscosity supersolution of 
\[
\begin{split}
&-u_t(x,t)-H(x,t,u(x,t),D_xu(x,t)) \\
&=-\dfrac{C_1\beta}{2}e^{\gamma t}\varepsilon^2 -K_3\omega_{\mathcal{A}}(\mathcal{M}_{\mathcal{A}}(\varepsilon+\alpha)) -\mu_{\varepsilon,\mathcal{A}}(\mathcal{M}_{\mathcal{A}}\alpha)
\quad \mathrm{in} \ \mathcal{A}_{\rho}.
\end{split}
\]
Letting $\varepsilon,\alpha \to 0$,
a discontinuous stability theorem implies that
$u$ is a viscosity subsolution of 
\begin{equation}
-u_t(x,t)-H(x,t,u(x,t),D_x u(x,t))=0
\quad \mathrm{in} \ \mathcal{A}_{\rho}. 
\label{eq:equivB1}
\end{equation}
Since $\bigcup \mathcal{A}_{\rho}=\mathbb{R}^n \times (0,T)$,
$u$ is a viscosity subsolution of \eqref{eq:equivB1} in $\mathbb{R}^n \times (0,T)$.
Combining this fact and the fact that $u$ is a viscosity supersolution of \eqref{eq:HJu}--\eqref{eq:ID},
We deduce that $u$ is a Barron--Jensen solution of \eqref{eq:HJu}--\eqref{eq:ID}.
\end{proof}

\section*{Acknowledgments}
The author would like to thank Professor Nao Hamamuki for the fruitful discussions and insightful suggestions.
%The authors are grateful to the anonymous referees for their valuable suggestions for the original manuscript.
This work was partially supported by JST SPRING, Grant Number JPMJSP2119, and JSPS KAKENHI, Grant Number 24KJ0269.

\bibliographystyle{plain}
\bibliography{references}
\end{document}